\theoremstyle{plain}
\newtheorem{theorem}{Theorem}[section]
\newtheorem*{theorem*}{Theorem}
\newtheorem{lemma}[theorem]{Lemma}
\newtheorem{prop}[theorem]{Proposition}
\newtheorem{cor}[theorem]{Corollary}
\newtheorem{rem}[theorem]{Remark}
\newtheorem{ex}[theorem]{Example}
\newtheorem*{mt*}{Main Theorem}
\newcommand{\del}{\partial}
\newcommand{\delbar}{\bar{\del}}
\newcommand{\deltabar}{\bar{\delta}}
\DeclareMathOperator{\Imm}{Im}
\DeclareMathOperator{\Ker}{Ker}
\title[Differential operators on almost-Hermitian manifolds]{Differential operators on almost-Hermitian manifolds and harmonic forms}
\author[Nicoletta Tardini and Adriano Tomassini]{Nicoletta Tardini and Adriano Tomassini}
\address[Nicoletta Tardini]{Dipartimento di Matematica ``G. Peano'' \\
Universit\`{a} degli studi di Torino \\
Via Carlo Alberto 10\\
10123 Torino, Italy.
}
\email{nicoletta.tardini@gmail.com}
\email{nicoletta.tardini@unito.it}
\address[Adriano Tomassini]{Dipartimento di Scienze Matematiche, Fisiche e Informatiche\\
Unit\`{a} di Matematica e Informatica,
Universit\`{a} degli studi di Parma\\
Parco Area delle Scienze 53/A, 43124 \\
Parma, Italy}
\email{adriano.tomassini@unipr.it}
\keywords{almost-complex manifold; almost-K\"ahler manifold; differential operator; cohomology.}
\thanks{\newline 
The first author is partially supported by SIR2014 project RBSI14DYEB ``Analytic aspects in complex and hypercomplex geometry'', and by GNSAGA of INdAM.
The second author is partially supported by the Project PRIN ``Varietà reali e complesse: geometria, topologia e analisi armonica'', Project PRIN 2017 ``Real and Complex Manifolds: Topology, Geometry and holomorphic dynamics''
and by GNSAGA of INdAM}
\subjclass[2010]{32Q60, 53C15, 58A14, 53D05}
\begin{document}

\maketitle

\begin{abstract}
We consider several differential operators on compact almost-complex, almost-Hermitian and almost-K\"ahler manifolds. We discuss Hodge Theory for these operators and a possible cohomological interpretation. We compare the associated spaces of harmonic forms and cohomologies with the classical de Rham, Dolbeault, Bott-Chern and Aeppli cohomologies.
\end{abstract}

\section{Introduction}

On a complex manifold $X$ the exterior derivative $d$ decomposes as the sum of two other cohomological differential operators, namely $d=\del+\delbar$ satisfying
$\del^2=0$, $\delbar^2=0$ and $\del\delbar+\delbar\del=0$.
Once a Hermitian metric on $X$ is fixed one can associate to $\delbar$ a natural elliptic differential operator, the Dolbeault Laplacian; if $X$ is compact the kernel of this operator has a cohomological interpretation, i.e., it is isomorphic to the Dolbeault cohomology of $X$. If we do not assume the integrability of the almost-complex structure, i.e., $(X,\,J)$ is an almost-complex manifold, the $\delbar$ operator is still well-defined but it has no more a cohomological meaning. However, we can define some natural differential operators.\\
In this paper we are interested in studying the properties of such operators, their harmonic forms and possibly their cohomological meaning on compact manifolds endowed with a non-integrable almost-complex structure.
More precisely, in the non-integrable case $d$ decomposes as
$$
d:A^{p,q}(X)\to A^{p+2,q-1}(X)\oplus A^{p+1,q}(X)\oplus A^{p,q+1}(X)\oplus A^{p-1,q+2}(X)
$$
and we set
$$
d=\mu+\del+\delbar+\bar\mu\,.
$$
Then we define a $2$-parameter family of differential operators $\left\lbrace D_{a,b}\right\rbrace_{a,b\in\mathbb{C}\setminus\left\lbrace 0\right\rbrace}$ whose squares are zero and interpolate between $d$ and $d^c:=J^{-1}dJ$.
In general $d$ and $d^c$ do not anticommute and so in Proposition \ref{prop:anticommutation-relation} we give necessary and sufficient conditions on the parameters in order to have $D_{a,b}D_{c,e}+D_{c,e}D_{a,b}=0$; in such a case we define the Bott-Chern and Aeppli cohomology groups. Moreover, if we fix a $J$-Hermitian metric we develop a Hodge theory for these cohomologies together with the cohomology of $D_{a,b}$
(see Theorems \ref{thm:Dab-harmonic}, \ref{thm:Dab-harmonic-Dab-cohomology}, Proposition \ref{prop:Dab-star} and Theorems
\ref{thm:BCA-harmonic}, \ref{thm:BCA-harmonic-BCA-cohomology}). In particular we show that if $|a|=|b|$ then the cohomology of $D_{a,b}$ is isomorphic to the de Rham cohomology (cf. Proposition \ref{prop:isom-parametric-derham}). Moreover, in Example \ref{example:kodaira-thurston-Dab-cohomology} we compute explicitly the invariant $D_{a,b}$-cohomology on the Kodaira-Thurston manifold endowed with an almost-complex structure, showing that it is isomorphic to the de Rham cohomology independently on the parameters.
Nevertheless, the considered parametrized cohomology groups do not generalize (except for the almost-K\"ahler case) the classical Dolbeault, Bott-Chern and Aeppli cohomology groups of complex manifolds. To the purpose of finding a possible generalization of these cohomologies we consider the operators (cf. \cite{debartolomeis-tomassini})
$$
\delta:=\del+\bar\mu\qquad\bar\delta:=\delbar+\mu\,.
$$
These two operators anticommute but their squares are zero if and only if $J$ is integrable. In Section \ref{section:harmonic-forms-almost-hermitian} we define a generalization of the Dolbeault, Bott-Chern and Aeppli Laplacians and develop a Hodge theory for these operators studying their kernels.\\
In the almost-K\"ahler setting considered in Section \ref{section:harmonic-forms-almost-kahler} we derive some further relations among the kernels of these operators, involving also the Betti numbers and the dimension of $\bar\delta$-harmonic forms (see Corollary \ref{cor:comparison-betti-numbers}). A Hard-Lefschetz type Theorem for Bott-Chern harmonic forms is also proved (cf. Theorem
\ref{thm:hard-lefschetz}).\\
Finally, in the last Section we compute explicit examples on the two $4$-dimensional non-toral nilmanifolds and the Iwasawa manifold showing that a
bi-graded decomposition for the $\bar\delta$-harmonic forms cannot be expected and that the equalities in Theorem \ref{thm:equalities-harmonic-spaces} and the inequalities in Corollary \ref{cor:comparison-betti-numbers} are peculiar of the almost-K\"ahler case, giving therefore obstructions to the existence of a symplectic structure compatible with a fixed almost-complex structure on a compact manifold. In particular, we show in Example \ref{example-1} that even if in the bigraded case the spaces we consider coincide with the spaces considered in \cite{cirici-wilson-2}, this fails on total degree.

\medskip
\noindent{\sl Acknowledgments.}  The authors would like to thank Daniele Angella, Joana Cirici, Scott O. Wilson for interesting and useful discussions.
They also would like to thank the anonymous referee for reading carefully the paper and for providing useful comments that improved the presentation.
 Most of the work has been written during the first-named author's post-doctoral fellow at the Dipartimento di Matematica e Informatica ``Ulisse Dini'' of the Universit\`{a} di Firenze. She is grateful to the department for the hospitality.

\section{Preliminaries}\label{section:preliminaries}

Let $(X\,,J)$ be an almost-complex manifold then the almost-complex structure $J$ induces a natural bi-grading on the space of forms
$A^\bullet(X)=\bigoplus_{p+q=\bullet}A^{p,q}(X)$. If $J$ is non-integrable the exterior derivative $d$ acts on forms as
$$
d:A^{p,q}(X)\to A^{p+2,q-1}(X)\oplus A^{p+1,q}(X)\oplus A^{p,q+1}(X)\oplus A^{p-1,q+2}(X)
$$
and so it splits into four components
$$
d=\mu+\del+\delbar+\bar\mu\,,
$$
where $\mu$ and $\bar\mu$ are differential operators that are linear over functions. In particular, they are related to the Nijenhuis tensor $N_J$ by
$$
\left(\mu\alpha+\bar\mu\alpha\right)(X,Y)=\frac{1}{4} N_J(X,Y)
$$
where $\alpha\in A^1(X)$.
Since $d^2=0$ one has
$$
\left\lbrace
\begin{array}{lcl}
\mu^2 & =& 0\\
\mu\del+\del\mu & = & 0\\
\del^2+\mu\delbar+\delbar\mu & = & 0\\
\del\delbar+\delbar\del+\mu\bar\mu+\bar\mu\mu & = & 0\\
\delbar^2+\bar\mu\del+\del\bar\mu & = & 0\\
\bar\mu\delbar+\delbar\bar\mu & = & 0\\
\bar\mu^2 & =& 0
\end{array}
\right.\,
$$

Consider the following differential operators (cf. \cite{debartolomeis-tomassini})
$$
\delta:=\del+\bar\mu\,,\qquad \bar\delta:=\delbar+\mu
$$
with $\delta:A^{\pm}(X)\to A^\pm(X)$ and $\delta:A^{\pm}(X)\to A^\mp(X)$, where $A^\pm(X)$ are defined accordingly to the parity of $q$ in the $J$-induced bigraduation on $A^\bullet(X)$. 

\begin{lemma}
Let $(X,J)$ be an almost-complex manifold, the following relations hold
\begin{itemize}
\item[$\bullet$] $d=\delta+\bar\delta$,
\item[$\bullet$] $\delta^2+\bar\delta^2=0$,
\item[$\bullet$] $\delta^2=\del^2-\delbar^2$,
\item[$\bullet$] $\delta\bar\delta+\bar\delta\delta=0$.
\end{itemize}
\end{lemma}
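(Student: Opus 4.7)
The proof is an exercise in bookkeeping: each identity follows by expanding $\delta$ and $\bar\delta$ via their definitions and then plugging in the seven relations arising from $d^2=0$ displayed in the preliminaries. My plan is as follows.

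The first bullet is immediate: writing out $\delta+\bar\delta=(\del+\bar\mu)+(\delbar+\mu)$ and regrouping recovers $d=\mu+\del+\delbar+\bar\mu$. I would note this in one line.

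For the third bullet I would expand
\[
\delta^2=(\del+\bar\mu)^2=\del^2+(\del\bar\mu+\bar\mu\del)+\bar\mu^2.
\]
The last term vanishes by $\bar\mu^2=0$, and the relation $\delbar^2+\bar\mu\del+\del\bar\mu=0$ gives $\del\bar\mu+\bar\mu\del=-\delbar^2$, hence $\delta^2=\del^2-\delbar^2$. The analogous computation for $\bar\delta^2$, using $\mu^2=0$ and $\del^2+\mu\delbar+\delbar\mu=0$, yields $\bar\delta^2=\delbar^2-\del^2$. Adding these proves the second bullet. Alternatively, the second bullet also drops out of $d^2=0$ once the fourth bullet is established, since $0=d^2=(\delta+\bar\delta)^2=\delta^2+\bar\delta^2+(\delta\bar\delta+\bar\delta\delta)$.

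For the fourth bullet I would expand
\[
\delta\bar\delta+\bar\delta\delta=(\del\delbar+\delbar\del)+(\del\mu+\mu\del)+(\bar\mu\delbar+\delbar\bar\mu)+(\bar\mu\mu+\mu\bar\mu).
\]
The middle two parentheses vanish by $\mu\del+\del\mu=0$ and $\bar\mu\delbar+\delbar\bar\mu=0$, while the sum of the first and fourth parentheses equals $\del\delbar+\delbar\del+\mu\bar\mu+\bar\mu\mu=0$. This proves $\delta\bar\delta+\bar\delta\delta=0$.

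There is no genuine obstacle; the only thing to watch is that every one of the seven relations from $d^2=0$ is used in exactly the right place, and that the cross terms in $\delta^2$, $\bar\delta^2$, $\delta\bar\delta$ and $\bar\delta\delta$ are tracked without sign errors. Since the operators $\mu,\del,\delbar,\bar\mu$ all have odd total degree, no Koszul signs appear beyond the ones already encoded in the relations.
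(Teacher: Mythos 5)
Your proposal is correct and follows essentially the same route as the paper: expand $\delta^2$, $\bar\delta^2$ and $\delta\bar\delta+\bar\delta\delta$ and substitute the relations coming from $d^2=0$, obtaining $\delta^2=\del^2-\delbar^2$, $\bar\delta^2=\delbar^2-\del^2$ (hence the second bullet by adding), and the anticommutation in the fourth bullet. The only cosmetic difference is that you additionally note the second bullet can be deduced from $d^2=0$ together with the fourth, which the paper does not need.
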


\begin{proof}
The first statement follows immediately from the definitions. The second and third points follow from direct computation
$$
\bar\delta^2=(\delbar+\mu)(\delbar+\mu)=\delbar^2+\delbar\mu+\mu\delbar+\mu^2=\delbar^2-\del^2
$$
and, similarly, $\delta^2=\del^2-\delbar^2$.\\
Finally, for the last statement we have
$$
\delta\bar\delta+\bar\delta\delta=\del\delbar+\del\mu+\bar\mu\delbar+\bar\mu\mu+\delbar\del+\delbar\bar\mu+\mu\del+\mu\bar\mu=0.
$$
\end{proof}

If $D=d,\del,\delta,\bar\delta,\mu,\bar\mu$ we set $D^c:=J^{-1}DJ$, then
$\delta^c=-i\delta$ and $\bar\delta^c=i\bar\delta$ and
$$
d^c=i(\bar\delta-\delta)=i(\delbar+\mu-\del-\bar\mu).
$$
Notice that in general if $J$ is not integrable $d$ and $d^c$ do not anticommute, indeed we have
$$
dd^c+d^cd=2i(\bar\delta^2-\delta^2)=4i(\delbar^2-\del^2)\,.
$$
Therefore, an almost-complex structure $J$ is integrable if and only if $d^c=i(\delbar-\del)$ if and only if $d$ and $d^c$ anticommute.\\

Let $g$ be a $J$-Hermitian metric and denote with $*$ the associated anti-linear Hodge-*-operator. If $D=d,\del,\delbar,\mu,\bar\mu$ we set
$D^*:=-*D*$ and it turns out that $D^*$ is the adjoint of $D$ with respect to the $L^2$-pairing induced on forms (cf. \cite{debartolomeis-tomassini}, \cite{cirici-wilson-1}).\\
As usual one can consider the following differential operators
$$
\Delta_{\delbar}:=\delbar\delbar^*+\delbar^*\delbar\,,
$$
$$
\Delta_{\del}:=\del\del^*+\del^*\del\,,
$$
$$
\Delta_{\bar\mu}:=\bar\mu\bar\mu^*+\bar\mu^*\bar\mu\,,
$$
$$
\Delta_{\mu}:=\mu\mu^*+\mu^*\mu\,.
$$
While on compact almost-Hermitian manifolds the operators $\Delta_{\delbar}$,
$\Delta_{\del}$ are elliptic, and so the associated spaces 
$\mathcal{H}^{\bullet,\bullet}_{\delbar}(X):=\text{Ker}\,\Delta_{\delbar}$,
$\mathcal{H}^{\bullet,\bullet}_{\del}(X):=\text{Ker}\,\Delta_{\del}$ of harmonic forms are finite dimensional, in case of
$\Delta_{\bar\mu}$,
$\Delta_{\mu}$ the spaces
$\mathcal{H}^{\bullet,\bullet}_{\bar\mu}(X):=\text{Ker}\,\Delta_{\bar\mu}$ and 
$\mathcal{H}^{\bullet,\bullet}_{\mu}(X):=\text{Ker}\,\Delta_{\mu}$ are infinite-dimensional in general (recall that $\bar\mu$ and $\mu$ are linear over functions). In the following we will consider several spaces of harmonic forms and we will discuss the relations with these ones.

\section{Differential operators on almost-complex manifolds}

Let $(X,J)$ be an almost-complex manifold and consider a linear combination of the differential operators $\del\,,\delbar\,,\mu\,,\bar\mu$,
$$
D_{a,b,c,e}:=a\,\delbar+b\,\del+c\,\mu+e\,\bar\mu\,,
$$
with $a,b,c,e\in\mathbb{C}\setminus\left\lbrace 0\right\rbrace$.
Clearly $D_{a,b,c,e}$ satisfies the Leibniz rule; 
we are interested in finding conditions on the parameters ensuring that $D_{a,b,c,e}^2=0$.
Notice that if $J$ is integrable
$$
D_{a,b,c,e}:=a\,\delbar+b\,\del,
$$
and $D_{a,b,c,e}^2=0$ for any choice of the parameters.
Therefore from now on $J$ will always be assumed to be non-integrable if not stated otherwise.
In fact we have

\begin{lemma}
Let $(X,J)$ be an almost-complex manifold. Then $D_{a,b,c,e}^2=0$
if and only if
$$
e=\frac{a^2}{b}\quad\text{and}\quad c=\frac{b^2}{a}\,.
$$
\end{lemma}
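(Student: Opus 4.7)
The plan is to expand $D_{a,b,c,e}^2$ and organize the resulting sixteen quadratic terms by the bidegree shift each one induces. Since $\del, \delbar, \mu, \bar\mu$ shift bidegree by $(1,0), (0,1), (2,-1), (-1,2)$ respectively, the possible total shifts are $(4,-2), (3,-1), (2,0), (1,1), (0,2), (-1,3), (-2,4)$. In the four outermost bidegree shifts only one anticommutator appears, and four of the relations recorded in Section~\ref{section:preliminaries} coming from $d^2=0$, namely $\mu^2=0$, $\bar\mu^2=0$, $\del\mu+\mu\del=0$, $\delbar\bar\mu+\bar\mu\delbar=0$, immediately kill these contributions.

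What remains lies in the three central bidegree shifts $(2,0), (0,2), (1,1)$. Using the remaining three relations from $d^2=0$, namely $\mu\delbar+\delbar\mu=-\del^2$, $\bar\mu\del+\del\bar\mu=-\delbar^2$, and $\mu\bar\mu+\bar\mu\mu=-(\del\delbar+\delbar\del)$, a direct computation gives
\begin{equation*}
D_{a,b,c,e}^2 = (b^2-ac)\,\del^2 \,+\, (a^2-be)\,\delbar^2 \,+\, (ab-ce)(\del\delbar+\delbar\del).
\end{equation*}
The three summands have pairwise distinct bidegree shifts, so the vanishing of $D_{a,b,c,e}^2$ on every pure-bidegree form forces each component-operator to vanish separately. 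Under the standing non-integrability assumption each of $\del^2, \delbar^2, \del\delbar+\delbar\del$ is a nontrivial operator, so one obtains the scalar conditions $ac=b^2$, $be=a^2$, $ce=ab$. Since $a, b \in \C \setminus\{0\}$, the first two yield $c = b^2/a$ and $e = a^2/b$, and substitution shows the third equation is then automatically satisfied. Conversely, with these values the three coefficients all vanish, so $D_{a,b,c,e}^2 = 0$.

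The only delicate step is passing from the vanishing of the operator-valued expression above to the vanishing of the three scalar coefficients, which requires that $\del^2$, $\delbar^2$ and $\del\delbar+\delbar\del$ be nontrivial operators when $J$ is not integrable. This is where the standing hypothesis enters essentially: were $J$ integrable, then $\mu=\bar\mu=0$ would reduce $D_{a,b,c,e}$ to $a\delbar+b\del$ and all three operators would vanish, so that any $c,e$ would work and the lemma would fail. To justify the non-triviality in the non-integrable case I would appeal, via the same three relations, to the identities $\del^2=-\{\mu,\delbar\}$, $\delbar^2=-\{\bar\mu,\del\}$, $\del\delbar+\delbar\del=-\{\mu,\bar\mu\}$, and use that the non-vanishing of the Nijenhuis tensor makes $\mu$ and $\bar\mu$ act nontrivially, forcing these brackets to be nontrivial as well.
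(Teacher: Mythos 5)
Your proof is correct and follows essentially the same route as the paper, whose proof consists precisely of the identity $D_{a,b,c,e}^2=(a^2-be)\,\delbar^2+(b^2-ac)\,\del^2+(ab-ce)(\del\delbar+\delbar\del)$ with the conditions then read off; your extra attention to extracting the scalar equations from this operator identity is a reasonable refinement of the step the paper leaves implicit. One caution: the nontriviality of $\del\delbar+\delbar\del$ under non-integrability is neither obvious (it does not follow merely from $\mu\neq0$, $\bar\mu\neq0$) nor needed, since the shifts $(2,0)$ and $(0,2)$ alone force $ac=b^2$ and $be=a^2$ (using, e.g., $\del^2 f=-\mu(\delbar f)$ and $\delbar^2 f=-\bar\mu(\del f)$ on functions together with the tensoriality of $\mu,\bar\mu$ and $N_J\neq0$ to see that $\del^2$ and $\delbar^2$ are nonzero operators), after which $ab-ce=0$ is automatic, as you yourself observe.
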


\begin{proof}
By a direct computation one has
$$
D_{a,b,c,e}^2=\delbar^2(a^2-be)+\del^2(b^2-ac)+
(\del\delbar+\delbar\del)(ab-ce)\,.
$$
\end{proof}

We set
$$
D_{a,b}:=a\,\delbar+b\,\del+\frac{b^2}{a}\mu+\frac{a^2}{b}\bar\mu,
$$
with $a,b\in\mathbb{C}\setminus\left\lbrace 0\right\rbrace$.\\
Since $D_{a,b}^2=0$ we define the associated parametrized cohomology
$$
H_{D_{a,b}}^\bullet(X):=\frac{\text{Ker}\,D_{a,b}}{\text{Im}\,D_{a,b}}\,.
$$
Notice that if $a=b$, one has $D_{a,a}=a\,d$ i.e., a multiple of the exterior derivative.\\
In general, $D_{a,b}$ is not a real operator, indeed by a straightforward computation one gets

\begin{lemma}\label{lemma:real-operator}
Let $(X,J)$ be an almost-complex manifold. Then, $D_{a,b}=\overline{D_{a,b}}$
if and only if $a=\bar b$.
\end{lemma}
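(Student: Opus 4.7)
The plan is to unpack the meaning of $\overline{D_{a,b}}$ and then compare coefficients against $D_{a,b}$ component by component.

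First, I would recall that for any $\C$-linear operator $D$ on complex-valued forms, $\overline D$ is defined by $\overline D(\alpha) := \overline{D(\bar\alpha)}$. Applying this to the four bi-degree components of $d$, which is itself a real operator, one immediately gets
\[
\overline{\bar\partial} = \partial, \qquad \overline{\partial} = \bar\partial, \qquad \overline{\mu} = \bar\mu, \qquad \overline{\bar\mu} = \mu,
\]
simply because $\overline{\cdot}$ swaps the bi-degrees $(p,q)\leftrightarrow(q,p)$ and $d$ is real. From this and the antilinearity of $\overline{\cdot}$ in scalars, I would compute
\[
\overline{D_{a,b}} = \bar a\,\partial + \bar b\,\bar\partial + \frac{\bar b^{2}}{\bar a}\,\bar\mu + \frac{\bar a^{2}}{\bar b}\,\mu.
\]

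Next, I would compare this expression term by term with
\[
D_{a,b} = a\,\bar\partial + b\,\partial + \frac{b^{2}}{a}\,\mu + \frac{a^{2}}{b}\,\bar\mu.
\]
Since the operators $\partial,\bar\partial,\mu,\bar\mu$ map $A^{p,q}(X)$ into four distinct bi-degree summands, they are linearly independent as $\C$-linear maps, so equality of the two operators is equivalent to equality of the four coefficient pairs. The coefficients of $\bar\partial$ give $a=\bar b$, which is the same condition that the coefficients of $\partial$ impose. It then remains to check that under the assumption $a=\bar b$ the remaining two coefficient equations are automatic: substituting $a=\bar b$ gives $\tfrac{b^{2}}{a} = \tfrac{b^{2}}{\bar b}$ and $\tfrac{\bar a^{2}}{\bar b} = \tfrac{b^{2}}{\bar b}$, so the $\mu$-coefficients match; similarly $\tfrac{a^{2}}{b} = \tfrac{\bar b^{2}}{b}$ agrees with $\tfrac{\bar b^{2}}{\bar a} = \tfrac{\bar b^{2}}{b}$ for the $\bar\mu$-coefficients. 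This yields both implications.

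There is essentially no obstacle here; the only point to be slightly careful about is justifying that the four operators are linearly independent so that comparing coefficients is legitimate. This follows from the distinct target bi-degrees, which is already recorded in the decomposition of $d$ stated in the preliminaries.
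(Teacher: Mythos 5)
Your proof is correct and matches the paper's intent: the paper dismisses this as a ``straightforward computation,'' and your computation of $\overline{D_{a,b}}=\bar a\,\del+\bar b\,\delbar+\frac{\bar b^{2}}{\bar a}\bar\mu+\frac{\bar a^{2}}{\bar b}\mu$ followed by coefficient comparison is exactly that computation. The only hairline point is that comparing coefficients requires not just distinct target bi-degrees but also that the four operators are nonzero, which holds since $\del,\delbar$ never vanish identically and $\mu,\bar\mu\neq 0$ under the paper's standing assumption that $J$ is non-integrable.
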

We set
$$
D_a:=a\,\delbar+\bar a\,\del+\frac{\bar a^2}{a}\mu+\frac{a^2}{\bar a}\bar\mu\,.
$$
Notice that the family of operators
$\left\lbrace D_a\right\rbrace_{a\in\mathbb{C}\setminus\left\lbrace 0\right\rbrace}$ contains the operators
$$
D_1=D_{1,1}=d\quad\text{and}\quad D_i=D_{i,-i}=d^c\,.
$$
In particular,
$$
H_{D_{1,1}}^\bullet(X)=H^\bullet_{dR}(X)\simeq
H_{d^c}^\bullet(X)=H_{D_{i,-i}}^\bullet(X)\,.
$$
Moreover, recall that if $J$ is non-integrable, $D_1D_i+D_iD_1\neq 0$, therefore we show when two real differential operators $D_a$ and $D_b$ anticommute.

\begin{prop}
Let $(X,J)$ be an almost-complex manifold. Then, $D_{a}D_b+D_bD_a=0$
if and only if $\bar a\,b\in\mathbb{R}$.
\end{prop}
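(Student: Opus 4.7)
The plan is to specialize the computation from Proposition~\ref{prop:anticommutation-relation} to $D_a=D_{a,\bar a}$ and $D_b=D_{b,\bar b}$. Expanding $D_aD_b+D_bD_a$ bilinearly in the four summands of each operator and applying the bracket relations collected in Section~\ref{section:preliminaries}, namely $\mu^2=\bar\mu^2=0$, $\mu\del+\del\mu=0$, $\mu\delbar+\delbar\mu=-\del^2$, $\mu\bar\mu+\bar\mu\mu=-(\del\delbar+\delbar\del)$, $\bar\mu\del+\del\bar\mu=-\delbar^2$, $\bar\mu\delbar+\delbar\bar\mu=0$, the anticommutator collapses to a linear combination of $\delbar^2$, $\del^2$ and $\del\delbar+\delbar\del$ only.

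First I would read off the three coefficients. The coefficient of $\delbar^2$, coming from the $(\delbar,\delbar)$ and $(\del,\bar\mu)$ cross-terms, is $2ab-\bar a\,b^2/\bar b-\bar b\,a^2/\bar a$; after clearing by $\bar a\bar b$ this factors as $-(a\bar b-\bar a b)^2/(\bar a\bar b)$. Symmetrically, the coefficient of $\del^2$ equals $-(a\bar b-\bar a b)^2/(ab)$. The coefficient of $\del\delbar+\delbar\del$ is $a\bar b+\bar a b-\bar a^2b^2/(a\bar b)-\bar b^2a^2/(\bar a b)$; writing $u=a\bar b$ and $v=\bar a b$ and multiplying through by $uv=|a|^2|b|^2$, this becomes $u^2v+uv^2-u^3-v^3=-(u+v)(u-v)^2$. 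All three coefficients therefore vanish precisely when $a\bar b=\bar a b$.

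Since $\overline{\bar a b}=a\bar b$, the identity $a\bar b=\bar a b$ asserts exactly that $\bar a b$ equals its own complex conjugate, i.e.\ $\bar a b\in\mathbb R$. This supplies both implications at once: if $\bar a b\in\mathbb R$ all three coefficients vanish, so $D_aD_b+D_bD_a=0$; conversely, the vanishing of the $\delbar^2$ coefficient alone (recall $\bar a\bar b\neq 0$) already forces $a\bar b=\bar a b$. The main obstacle is the purely algebraic repackaging of each coefficient as a (nonzero multiple of) $(a\bar b-\bar a b)^2$; once this factorization is in hand both directions are immediate, and a minor background point is that one uses non-integrability of $J$ to treat $\delbar^2,\del^2,\del\delbar+\delbar\del$ as independent pieces, exactly as in the analogous computation underlying the earlier $D_{a,b,c,e}^2=0$ lemma.
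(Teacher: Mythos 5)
Your proof is correct and takes essentially the same route as the paper: expand $D_aD_b+D_bD_a$, use the bidegree relations coming from $d^2=0$ to reduce everything to the coefficients of $\delbar^2$, $\del^2$ and $\del\delbar+\delbar\del$, and observe (with the same implicit use of non-integrability as in the paper's $D_{a,b,c,e}^2=0$ lemma) that their vanishing is equivalent to $a\bar b=\bar a b$, i.e.\ $\bar a\,b\in\mathbb{R}$. Your explicit factorizations $-(a\bar b-\bar a b)^2/(\bar a\bar b)$, $-(a\bar b-\bar a b)^2/(ab)$ and $-(u+v)(u-v)^2$ simply make precise the algebra the paper leaves unstated.
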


\begin{proof}
Set
$$
D_a:=a\,\delbar+\bar a\,\del+\frac{\bar a^2}{a}\mu+\frac{a^2}{\bar a}\bar\mu
\quad\text{and}\quad
D_b:=b\,\delbar+\bar b\,\del+\frac{\bar b^2}{b}\mu+\frac{b^2}{\bar b}\bar\mu\,.
$$
Then, $D_{a}D_b+D_bD_a=0$ if and only if
$$
\left\lbrace
\begin{array}{lcl}
a\bar b+b\bar a & =& \frac{\bar a^2 b^2}{a\bar b}+
\frac{a^2\bar b^2}{\bar a b}\\[8pt]
2\bar a\bar b & =& \frac{\bar b^2a}{b}+\frac{\bar a^2b}{a}
\end{array}
\right.
$$
if and only if $\bar b a=\bar a b$ concluding the proof.
\end{proof}

In fact, with the same argument, more generally one has

\begin{prop}\label{prop:anticommutation-relation}
Let $(X,J)$ be an almost-complex manifold. Then, 
$D_{a,b}D_{c,e}+D_{c,e}D_{a,b}=0$
if and only if $ae=bc$.
\end{prop}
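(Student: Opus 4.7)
The plan is to mimic the calculation the authors just carried out for $D_{a,b,c,e}^{2}$: expand the anticommutator $D_{a,b}D_{c,e}+D_{c,e}D_{a,b}$, sort the resulting summands by their total bi-degree shift, and reduce each block via the consequences of $d^{2}=0$ recalled in Section~\ref{section:preliminaries}.

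Most blocks die for free. The shifts $(\pm 4,\mp 2)$ contain only $\mu^{2}$ or $\bar\mu^{2}$, which vanish. In shift $(3,-1)$ the only operators available are $\mu\del$ and $\del\mu$, and by the symmetry of the anticommutator each of them gets the same scalar coefficient, so their contribution is a multiple of $\mu\del+\del\mu=0$; the analogous statement in shift $(-1,3)$ uses $\bar\mu\delbar+\delbar\bar\mu=0$. What remains are the three blocks of shift $(2,0)$, $(1,1)$ and $(0,2)$. Here I would use $\del^{2}=-(\mu\delbar+\delbar\mu)$, $\delbar^{2}=-(\bar\mu\del+\del\bar\mu)$ and $\del\delbar+\delbar\del=-(\mu\bar\mu+\bar\mu\mu)$ to collapse each block to a scalar multiple of $\del^{2}$, $\del\delbar+\delbar\del$ and $\delbar^{2}$ respectively.

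The only substantive step is the algebraic simplification of the three resulting coefficients. Setting $u=ae$, $v=bc$ and using the factorizations $2uv-u^{2}-v^{2}=-(u-v)^{2}$ and $u^{2}v+uv^{2}-u^{3}-v^{3}=-(u-v)^{2}(u+v)$, a direct computation yields the identity
$$
D_{a,b}D_{c,e}+D_{c,e}D_{a,b}=-(ae-bc)^{2}\!\left(\tfrac{1}{ac}\del^{2}+\tfrac{ae+bc}{aebc}(\del\delbar+\delbar\del)+\tfrac{1}{be}\delbar^{2}\right).
$$

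The ``if'' direction is immediate from this identity. For the ``only if'' direction I would rely on the same tacit non-degeneracy already used in the preceding lemma on $D_{a,b,c,e}^{2}$: for a (generic) non-integrable $J$ the three operators $\del^{2}$, $\del\delbar+\delbar\del$, $\delbar^{2}$ are non-zero and sufficiently independent that the right-hand side vanishes only if $(ae-bc)^{2}=0$, i.e.\ $ae=bc$. The expected obstacle is therefore purely bookkeeping — the systematic sorting by bi-degree and the simplification of the three scalar coefficients — rather than any new conceptual ingredient beyond what was used in the previous lemma.
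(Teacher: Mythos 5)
Your proposal is correct and follows essentially the same route as the paper's proof: expand the anticommutator, sort by bidegree shift, use the $d^2=0$ relations to reduce everything to scalar coefficients multiplying $\del^2$, $\delbar^2$ and $\del\delbar+\delbar\del$, and note that these coefficients vanish precisely when $ae=bc$, with the same tacit non-degeneracy of those operators that the paper itself uses in the ``only if'' direction. Your factored identity $-(ae-bc)^2\bigl(\tfrac{1}{ac}\del^2+\tfrac{ae+bc}{aebc}(\del\delbar+\delbar\del)+\tfrac{1}{be}\delbar^2\bigr)$ is a correct (and slightly more explicit) repackaging of the paper's system of three equations.
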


\begin{proof}
Set
$$
D_{a,b}:=a\,\delbar+b\,\del+\frac{b^2}{a}\mu+\frac{a^2}{b}\bar\mu
\quad\text{and}\quad
D_{c,e}:=c\,\delbar+e\,\del+\frac{e^2}{c}\mu+\frac{c^2}{e}\bar\mu.
$$
Then, $D_{a,b}D_{c,e}+D_{c,e}D_{a,b}=0$ if and only if
$$
\left\lbrace
\begin{array}{lcl}
ae+bc & =& \frac{ a^2 e^2}{bc}+
\frac{b^2c^2}{ae}\\[8pt]
2ac & =& \frac{bc^2}{e}+\frac{a^2e}{b}\\[8pt]
2be & =& \frac{ae^2}{c}+\frac{b^2c}{a}
\end{array}
\right.
$$
if and only if $ae=bc$ concluding the proof.
\end{proof}

\begin{rem}
Notice that when $J$ is integrable, it is straightforward to show that two arbitrary operators of the form
$$
D_{a,b}:=a\,\delbar+b\,\del
\quad\text{and}\quad
D_{c,e}:=c\,\delbar+e\,\del
$$
anticommute.
\end{rem}

\begin{rem}
If $b=1$, namely $D_b=d$ then
$$
D_ad+dD_a=0
$$
if and only if $a\in\mathbb{R}$. Namely, the only operators anticommuting with the exterior derivative in $\left\lbrace D_a\right\rbrace_{a\in\mathbb{C}\setminus\left\lbrace 0\right\rbrace}$ are those with the parameter $a$ real.\\
If $b=i$, namely $D_b=d^c$ then
$$
D_ad^c+d^cD_a=0
$$
if and only if $i\bar a\in\mathbb{R}$. Namely, the only operators anticommuting with $d^c$ in $\left\lbrace D_a\right\rbrace_{a\in\mathbb{C}\setminus\left\lbrace 0\right\rbrace}$ are those with the parameter $a$ purely imaginary.
\end{rem}

As a consequence of the previous considerations, if $ae=bc$ and $(a,b)\neq (c,e)$ then $(A^\bullet(X),D_{a,b},D_{c,e})$ is a double complex since
$$
\left\lbrace
\begin{array}{lcl}
D_{a,b}^2 & = & 0\\
D_{c,e}^2 & = & 0\\
D_{a,b}D_{c,e}+D_{c,e}D_{a,b} & = & 0
\end{array}
\right.,
$$
hence one can define the \emph{Bott-Chern} and 
\emph{Aeppli cohomologies} respectively as
$$
H_{BC(D_{a,b},D_{c,e})}^\bullet(X):=
\frac{\text{Ker}\,D_{a,b}\cap\text{Ker}\,D_{c,e}}
{\text{Im}\,D_{a,b}D_{c,e}},\quad
H_{A(D_{a,b},D_{c,e})}^\bullet(X):=
\frac{\text{Ker}\,D_{a,b}D_{c,e}}
{\text{Im}\,D_{a,b}+\text{Im}\,D_{c,e}}.
$$

\medskip

Let $(X,J)$ be an almost-complex manifold and let $g$ be a $J$-Hermitian metric on $X$. Then the adjoint of $D_{a,b}$ is
$$
D^*_{a,b}:=\bar a\,\delbar^*+
\bar b\,\del^*+\frac{\bar b^2}{\bar a}\mu^*+
\frac{\bar a^2}{\bar b}\bar\mu^*\,.
$$
We consider the second-order differential operator
$$
\Delta_{a,b}:=
D_{a,b}D^*_{a,b}+D^*_{a,b}D_{a,b}\,.
$$

\begin{lemma}
Let $(X,J)$ be an almost-complex manifold. The differential operators $D_{a,b}$ are elliptic.
\end{lemma}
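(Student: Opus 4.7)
The plan is to establish ellipticity by computing the principal symbol of $D_{a,b}$ and verifying that the associated symbol complex is exact at every nonzero cotangent direction; equivalently, I will show that the second-order operator $\Delta_{a,b}$ is of Laplace type.

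First, I would invoke the key observation that $\mu$ and $\bar\mu$ are $C^\infty(X)$-linear, a fact already recorded in the Preliminaries via their tensorial expression in terms of the Nijenhuis tensor. Hence $\mu$ and $\bar\mu$ are differential operators of order zero and contribute nothing to the principal symbol, giving
$$
\sigma(D_{a,b})(x,\xi) \,=\, b\,\sigma(\del)(x,\xi) + a\,\sigma(\delbar)(x,\xi)\,.
$$

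Second, I would fix $x\in X$ and a nonzero $\xi\in T^*_xX$, and decompose $\xi=\xi^{1,0}+\xi^{0,1}$ in $T^*_xX\otimes\C$. Because $\xi$ is real, $\xi^{0,1}=\overline{\xi^{1,0}}$, and both components are nonzero. The standard formulas $\sigma(\del)(x,\xi)\alpha=\xi^{1,0}\wedge\alpha$ and $\sigma(\delbar)(x,\xi)\alpha=\xi^{0,1}\wedge\alpha$ then yield
$$
\sigma(D_{a,b})(x,\xi)\alpha \,=\, \eta\wedge\alpha\,,\qquad \eta \,:=\, b\,\xi^{1,0}+a\,\xi^{0,1}\,.
$$
Since $a,b\neq 0$, the two bi-degree components of $\eta$ are independently nonzero, so $\eta\neq 0$ in $\bigwedge^1 T^*_xX\otimes\C$.

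To finish, I would invoke the standard exactness of the Koszul complex: exterior multiplication by a nonzero $1$-form $\eta$ is acyclic, because choosing any tangent vector $v$ with $\eta(v)=1$ and applying the Cartan identity $i_v(\eta\wedge\cdot)+\eta\wedge i_v(\cdot)=\mathrm{Id}$ produces the required chain contraction. Therefore the symbol sequence of $D_{a,b}$ is exact at every nonzero $\xi$, which is precisely ellipticity of the complex $(A^\bullet(X),D_{a,b})$; in particular $\Delta_{a,b}$ has principal symbol $|\eta|_g^2\,\mathrm{Id}$. The only delicate point is the observation that $\mu$ and $\bar\mu$ are of order zero; once that is in place the argument reduces to routine Koszul exactness, so I do not anticipate a genuine obstacle.
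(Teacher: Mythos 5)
Your proof is correct: the decisive point in both arguments is the one you isolate, namely that $\mu$ and $\bar\mu$ are linear over functions and hence of order zero, so the principal symbol of $D_{a,b}$ is that of $a\,\delbar+b\,\del$. Where you diverge from the paper is in how you convert this into ellipticity. The paper works in a local unitary coframe, computes the symbol of the second-order operator $\Delta_{a,b}$ directly, and identifies it (after discarding lower-order terms and the mixed terms $a\bar b(\delbar\del^*+\del^*\delbar)+b\bar a(\del\delbar^*+\delbar^*\del)$, whose symbols vanish because $\xi^{1,0}$ and $\xi^{0,1}$ are orthogonal) with the symbol of $\vert a\vert^2\Delta_{\delbar}+\vert b\vert^2\Delta_{\del}$, so ellipticity is inherited from these known elliptic operators. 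You instead stay at first order: the symbol of $D_{a,b}$ at a real covector $\xi\neq 0$ is exterior multiplication by $\eta=b\,\xi^{1,0}+a\,\xi^{0,1}$, which is nonzero since its two bidegree components are separately nonzero, and Koszul exactness (via the contraction $i_v$ with $\eta(v)=1$) gives exactness of the symbol sequence, hence ellipticity of the complex and the explicit symbol $\vert\eta\vert_g^2\,\mathrm{Id}=\bigl(\vert b\vert^2\vert\xi^{1,0}\vert^2+\vert a\vert^2\vert\xi^{0,1}\vert^2\bigr)\mathrm{Id}$ for $\Delta_{a,b}$, consistent with the paper's expression. Your route is coordinate-free and more self-contained, since it does not presuppose ellipticity of $\Delta_{\delbar}$ and $\Delta_{\del}$ in the almost-Hermitian setting, while the paper's computation is quicker once those facts are taken as known; both are complete proofs of the lemma.
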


\begin{proof}
Fix $a$ and $b$.
We can compute the symbol of $\Delta_{a,b}$ as follows.
We work in a local unitary frame of $T^*X$ and choose a basis 
$\left\lbrace\theta^1,\cdots,\theta^n\right\rbrace$ such that the metric can be written as 
$$
g=\theta^i\otimes\bar\theta^i+\bar\theta^i\otimes\theta^i\,.
$$
Using Einstein notations, a $(p,q)$-form $\alpha$ locally can be written as
$$
\alpha=\alpha_{i_1\cdots i_pj_1\cdots j_q}\theta^{i_1}\wedge\cdots
\theta^{i_p}\wedge\bar\theta^{j_1}\wedge\cdots\wedge\bar\theta^{j_q}\,.
$$
Then $\delbar$ acts as
$$
(\delbar\alpha)_{p,q+1}=\delbar_{j_{q+1}}
\alpha_{i_1\cdots i_pj_1\cdots j_q}\bar\theta^{j_{q+1}}\wedge\theta^{i_1}\wedge\cdots
\theta^{i_p}\wedge\bar\theta^{j_1}\wedge\cdots\wedge\bar\theta^{j_q}\,.
$$
and $\mu$ acts as
$$
\mu\alpha=
\alpha_{i_1\cdots i_pj_1\cdots j_q}\mu\left(\theta^{i_1}\wedge\cdots
\theta^{i_p}\wedge\bar\theta^{j_1}\wedge\cdots\wedge\bar\theta^{j_q}\right)
$$
and similarly for $\del$ and $\bar\mu$.
In computing the symbol of $\Delta_{a,b}$ we are only interested in the highest-order differential acting on the coefficients
$\alpha_{i_1\cdots i_pj_1\cdots j_q}$. 
Denoting with $\simeq$ the equivalence of the symbol of the operators we get
$$
\Delta_{a,b}\simeq
\vert a\vert^2\Delta_{\delbar}+\vert b\vert^2\Delta_{\del}+
a\bar b(\delbar\del^*+\del^*\delbar)+
b\bar a(\del\delbar^*+\delbar^*\del)\simeq
\vert a\vert^2\Delta_{\delbar}+\vert b\vert^2\Delta_{\del}
$$
hence $\Delta_{a,b}$ is elliptic.
\end{proof}

We denote with $\mathcal{H}^k_{D_{a,b}}(X):=
\Ker\,(\Delta_{{a,b}{\vert A^k}})$ the space of $D_{a,b}$-harmonic $k$-forms.
By the elliptic operators theory we get the following

\begin{theorem}\label{thm:Dab-harmonic}
Let $(X,J,g)$ be a compact almost-Hermitian manifold, then
the following Hodge decompositions holds, for every $k$,
$$
A^k(X)=\mathcal{H}^k_{D_{a,b}}(X)\oplus D_{a,b} A^{k-1}(X)\oplus
D_{a,b}^* A^{k+1}(X)\,.
$$
Moreover, the space $\mathcal{H}^\bullet_{D_{a,b}}(X)$ is finite-dimensional.
\end{theorem}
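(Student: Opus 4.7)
The plan is to invoke the standard Fredholm machinery for elliptic self-adjoint operators on compact manifolds, using the preceding lemma as the key input. First I would observe that $\Delta_{a,b}$ is self-adjoint by construction and second-order elliptic by the previous lemma, so the compactness of $X$ forces $\mathcal{H}^k_{D_{a,b}}(X)=\Ker \Delta_{a,b\,|A^k}$ to be finite-dimensional, and yields the orthogonal $L^2$-decomposition $A^k(X)=\mathcal{H}^k_{D_{a,b}}(X)\oplus \Imm \Delta_{a,b\,|A^k}$. This settles the finite-dimensionality claim immediately.

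Next I would refine the coarse decomposition into the claimed three-piece one. From the definition of $\Delta_{a,b}$ it is immediate that $\Imm \Delta_{a,b}\subseteq D_{a,b}A^{k-1}(X)+D^*_{a,b}A^{k+1}(X)$. For the reverse, given any $\alpha=D_{a,b}\beta+D^*_{a,b}\gamma$, I would show it is $L^2$-orthogonal to every harmonic form $\eta\in\mathcal{H}^k_{D_{a,b}}(X)$. Since $\Delta_{a,b}\eta=0$ implies both $D_{a,b}\eta=0$ and $D^*_{a,b}\eta=0$ (standard integration by parts), one gets
$$
\langle \alpha,\eta\rangle=\langle \beta,D^*_{a,b}\eta\rangle+\langle \gamma, D_{a,b}\eta\rangle=0,
$$
so $\alpha\in\Imm\Delta_{a,b}$ by the orthogonal decomposition above.

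Finally I would check mutual orthogonality of the three summands. The orthogonality of $\mathcal{H}^k_{D_{a,b}}(X)$ to $D_{a,b}A^{k-1}(X)$ and to $D^*_{a,b}A^{k+1}(X)$ follows from the harmonic characterization just mentioned. The crucial remaining orthogonality $D_{a,b}A^{k-1}(X)\perp D^*_{a,b}A^{k+1}(X)$ uses $D_{a,b}^2=0$, which was established by construction of $D_{a,b}$: for any $\beta$ and $\eta$,
$$
\langle D_{a,b}\beta,D^*_{a,b}\eta\rangle=\langle \beta,(D^*_{a,b})^2\eta\rangle=\langle \beta,(D^2_{a,b})^*\eta\rangle=0.
$$
Combining this orthogonality with the two-term decomposition yields the desired $A^k(X)=\mathcal{H}^k_{D_{a,b}}(X)\oplus D_{a,b}A^{k-1}(X)\oplus D^*_{a,b}A^{k+1}(X)$.

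There is no substantial obstacle here: ellipticity of $\Delta_{a,b}$ (already proved) makes all the analytic content routine, and the identity $D_{a,b}^2=0$ (built into the definition of $D_{a,b}$) supplies the purely algebraic ingredient needed to pass from the two-term Hodge decomposition to the three-term refinement. The only mild care is in invoking that kernels of elliptic operators decompose in the standard way on the full graded space $A^\bullet(X)$, which is automatic since $\Delta_{a,b}$ preserves total degree.
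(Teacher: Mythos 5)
Your proposal is correct and follows exactly the route the paper intends: the paper offers no written argument beyond citing ``elliptic operators theory'' after the lemma establishing that $\Delta_{a,b}$ is elliptic, and your write-up simply supplies the standard details (self-adjointness, $\Ker\Delta_{a,b}=\Ker D_{a,b}\cap\Ker D^*_{a,b}$, the refinement of $\Imm\Delta_{a,b}$ into $D_{a,b}A^{k-1}\oplus D^*_{a,b}A^{k+1}$ via $D_{a,b}^2=0$). Nothing is missing or different in substance.
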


One has the following

\begin{theorem}\label{thm:Dab-harmonic-Dab-cohomology}
Let $(X,J,g)$ be a compact almost-Hermitian manifold, then there exists
an isomorphism, for every $k$,
$$
H^k_{D_{a,b}}(X)\simeq\mathcal{H}^k_{D_{a,b}}(X)\,.
$$
In particular, the space $H^\bullet_{D_{a,b}}(X)$ is finite-dimensional and we will denote with $h^\bullet_{D_{a,b}}(X)$ its dimension.
\end{theorem}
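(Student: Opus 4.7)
The plan is to run the standard Hodge-theoretic argument, using that $D_{a,b}^2=0$ (so $H^\bullet_{D_{a,b}}$ makes sense) together with the orthogonal decomposition of Theorem \ref{thm:Dab-harmonic} and the elliptic theory already established for $\Delta_{a,b}$.

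First I would identify the harmonic space algebraically: on a compact manifold, for any $\alpha\in A^k(X)$,
$$
\langle\Delta_{a,b}\alpha,\alpha\rangle=\|D_{a,b}\alpha\|^2+\|D^*_{a,b}\alpha\|^2,
$$
so $\mathcal{H}^k_{D_{a,b}}(X)=\Ker D_{a,b}\cap\Ker D^*_{a,b}$. Next I would refine the three-term decomposition of Theorem \ref{thm:Dab-harmonic} by noting that $D_{a,b}A^{k-1}(X)$ and $D^*_{a,b}A^{k+1}(X)$ are mutually orthogonal (since $D_{a,b}^2=0$ implies $\langle D_{a,b}\beta,D^*_{a,b}\gamma\rangle=\langle D_{a,b}^2\beta,\gamma\rangle=0$) and that both are orthogonal to $\mathcal{H}^k_{D_{a,b}}(X)$ by the characterization above. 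Intersecting the decomposition with $\Ker D_{a,b}$ and using that $D^*_{a,b}\gamma\in\Ker D_{a,b}$ forces $\|D^*_{a,b}\gamma\|^2=\langle D_{a,b}D^*_{a,b}\gamma,\gamma\rangle=0$, I obtain
$$
\Ker\bigl(D_{a,b}\colon A^k(X)\to A^{k+1}(X)\bigr)=\mathcal{H}^k_{D_{a,b}}(X)\oplus D_{a,b}A^{k-1}(X).
$$

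With this in hand, I would define the natural linear map
$$
\Phi\colon\mathcal{H}^k_{D_{a,b}}(X)\longrightarrow H^k_{D_{a,b}}(X),\qquad \alpha\longmapsto[\alpha],
$$
which is well-defined because harmonic forms are $D_{a,b}$-closed. Surjectivity is immediate from the displayed equality, since every $D_{a,b}$-closed form differs from its harmonic part by a $D_{a,b}$-exact form. For injectivity, suppose $\alpha\in\mathcal{H}^k_{D_{a,b}}(X)$ satisfies $\alpha=D_{a,b}\beta$; then
$$
\|\alpha\|^2=\langle\alpha,D_{a,b}\beta\rangle=\langle D^*_{a,b}\alpha,\beta\rangle=0,
$$
so $\alpha=0$. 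This gives the isomorphism, and finite-dimensionality of $H^k_{D_{a,b}}(X)$ then follows from the finite-dimensionality of $\mathcal{H}^k_{D_{a,b}}(X)$ asserted in Theorem \ref{thm:Dab-harmonic}.

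The only subtlety, and the one step that is not entirely formal, is verifying that the orthogonal decomposition of Theorem \ref{thm:Dab-harmonic} is compatible with $D_{a,b}$ in the sense needed above; but this is immediate from $D_{a,b}^2=0$ and self-adjointness of the $L^2$ pairing, so I do not expect any real obstacle. Everything else is the textbook translation of elliptic Hodge theory to the operator $D_{a,b}$, made legitimate by the ellipticity lemma proved just before the statement.
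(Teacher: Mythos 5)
Your argument is correct and is precisely the standard elliptic Hodge-theory argument that the paper itself invokes without writing out: Theorem \ref{thm:Dab-harmonic} gives the orthogonal decomposition, the identity $\mathcal{H}^k_{D_{a,b}}(X)=\Ker D_{a,b}\cap\Ker D^*_{a,b}$ plus $D_{a,b}^2=0$ yields $\Ker D_{a,b}=\mathcal{H}^k_{D_{a,b}}(X)\oplus D_{a,b}A^{k-1}(X)$, and the isomorphism and finite-dimensionality follow. No gaps; your treatment matches the paper's (implicit) proof.
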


As a consequence we have the analogue of the Poincar\'e duality for the cohomology groups $H^\bullet_{D_{a,b}}(X)$.

\begin{prop}\label{prop:Dab-star}
Let $(X,J,g)$ be a compact almost-Hermitian manifold of dimension $2n$, then the Hodge-$*$-operator induces a duality isomoprhism, for every $k$,
$$
*:H^k_{D_{a,b}}(X)\to H^{2n-k}_{D_{a,b}}(X)\,.
$$
In particular, for every $k$, one has the equalities
$h^k_{D_{a,b}}(X)=h^{2n-k}_{D_{a,b}}(X)$.
\end{prop}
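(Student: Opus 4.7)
The plan is to reduce the statement to showing that the Hodge-$*$-operator restricts to an isomorphism $*:\mathcal{H}^k_{D_{a,b}}(X)\to\mathcal{H}^{2n-k}_{D_{a,b}}(X)$ on the spaces of harmonic forms, and then combine this with the isomorphism $H^k_{D_{a,b}}(X)\simeq\mathcal{H}^k_{D_{a,b}}(X)$ from Theorem \ref{thm:Dab-harmonic-Dab-cohomology} to conclude. Since $*$ is a conjugate-linear isomorphism $A^k(X)\to A^{2n-k}(X)$ whose square is a nonzero scalar on each degree, it is automatically invertible, so the content of the proposition is entirely about the interaction of $*$ with $D_{a,b}$ and its formal adjoint.

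The key step is to verify the identity
\[
D^*_{a,b} = -*D_{a,b}*.
\]
Two ingredients enter. First, the paper already recalls that each of the component operators satisfies $\bar\partial^*=-*\bar\partial*$, $\partial^*=-*\partial*$, $\mu^*=-*\mu*$, $\bar\mu^*=-*\bar\mu*$. Second, since $*$ is anti-linear, whenever it is commuted past any of the complex scalar coefficients $a$, $b$, $b^2/a$, $a^2/b$ appearing in $D_{a,b}$, that coefficient is conjugated. Summing the four contributions yields precisely the operator $\bar a\,\bar\partial^*+\bar b\,\partial^*+(\bar b^2/\bar a)\mu^*+(\bar a^2/\bar b)\bar\mu^*$, which is the expression for $D^*_{a,b}$ stated before Theorem \ref{thm:Dab-harmonic}.

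With this identity in hand, for $\alpha\in A^k(X)$ one obtains $D_{a,b}(*\alpha)=\pm *\,D^*_{a,b}\alpha$ and $D^*_{a,b}(*\alpha)=\pm *\,D_{a,b}\alpha$ (the signs coming from $*^2$ on the relevant degrees). Hence $\alpha\in\mathcal{H}^k_{D_{a,b}}(X)$ if and only if $*\alpha\in\mathcal{H}^{2n-k}_{D_{a,b}}(X)$, which is the desired harmonic isomorphism. The dimensional equality $h^k_{D_{a,b}}(X)=h^{2n-k}_{D_{a,b}}(X)$ then follows immediately.

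The only delicate point I anticipate is careful bookkeeping of the conjugations produced by the anti-linearity of $*$ in establishing $D^*_{a,b}=-*D_{a,b}*$; beyond that, the result is a formal consequence of the Hodge decomposition of Theorem \ref{thm:Dab-harmonic} together with the harmonic/cohomology isomorphism of Theorem \ref{thm:Dab-harmonic-Dab-cohomology}.
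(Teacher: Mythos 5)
Your argument is correct and matches the route the paper intends (the Proposition is stated there without proof, as a consequence of the Hodge theory just developed): the anti-linearity of $*$ conjugates the coefficients so that $D^*_{a,b}=-*D_{a,b}*$, hence $*$ interchanges $\Ker D_{a,b}\cap\Ker D^*_{a,b}$ in complementary degrees, and Theorems \ref{thm:Dab-harmonic} and \ref{thm:Dab-harmonic-Dab-cohomology} transfer this to the cohomology groups. No gaps.
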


Similarly, one could develop a Hodge Theory for the Bott-Chern and Aeppli cohomologies of $(A^\bullet(X),D_{a,b},D_{c,e})$ (with $ae=bc$ and $(a,b)\neq (c,e)$)
following for instance \cite{schweitzer}.\\
In particular, the Bott-Chern and Aeppli Laplacians can be defined as
$$
\Delta_{BC_{a,b,c,e}}=(D_{a,b}D_{c,e})(D_{a,b}D_{c,e})^*+
(D_{a,b}D_{c,e})^*(D_{a,b}D_{c,e})+
(D_{c,e}^*D_{a,b})(D_{c,e}^*D_{a,b})^*+
$$
$$
(D_{c,e}^*D_{a,b})^*(D_{c,e}^*D_{a,b})+
D_{c,e}^*D_{c,e}+D_{a,b}^*D_{a,b}\,,
$$
$$\Delta_{A_{a,b,c,e}} \;:=\; 
D_{a,b}D_{a,b}^*+D_{c,e}D_{c,e}^*+
(D_{a,b}D_{c,e})^*(D_{a,b}D_{c,e})+
(D_{a,b}D_{c,e})(D_{a,b}D_{c,e})^*+
$$
$$
(D_{c,e}D_{a,b}^*)^*(D_{c,e}D_{a,b}^*)+
(D_{c,e}D_{a,b}^*)(D_{c,e}D_{a,b}^*)^*\,.
$$
These operators are elliptic and we denote with $\mathcal{H}^k_{BC(D_{a,b},D_{c,e})}(X):=\Ker(\Delta_{BC_{a,b,c,e}|A^k})$ the space of $BC$-harmonic $k$-forms and with $\mathcal{H}^k_{A(D_{a,b},D_{c,e})}(X):=\Ker(\Delta_{A_{a,b,c,e}\vert A^k})$ the space of $A$-harmonic $k$-forms.
By a direct calculation one can show the following
\begin{prop}
Let $(X,J,g)$ be a compact almost-Hermitian manifold. If 
 $ae=bc$ and $(a,b)\neq (c,e)$
then, a differential form $\alpha\in\mathcal{H}^k_{BC(D_{a,b},D_{c,e})}(X)$ if and only if
$$
D_{a,b}\,\alpha=0\,,\quad D_{c,e}\,\alpha=0\,,\quad
(D_{a,b}D_{c,e})^*\,\alpha=0\,.
$$
Similarly,
$\alpha\in\mathcal{H}^k_{A(D_{a,b},D_{c,e})}(X)$ if and only if
$$
(D_{a,b})^*\,\alpha=0\,,\quad (D_{c,e})^*\,\alpha=0\,,\quad
D_{a,b}D_{c,e}\,\alpha=0\,.
$$
\end{prop}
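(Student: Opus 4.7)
The plan is to apply the classical Hodge-theoretic trick that exploits how each Laplacian has been assembled as a sum of operators of the form $LL^*$ and $L^*L$. On the compact almost-Hermitian manifold $X$, pairing $\Delta_{BC_{a,b,c,e}}\alpha$ (respectively $\Delta_{A_{a,b,c,e}}\alpha$) against $\alpha$ in the $L^2$ inner product and integrating by parts yields a sum of six squared $L^2$-norms, each of which must vanish if $\alpha$ is harmonic. The ellipticity of the Bott-Chern and Aeppli Laplacians has already been asserted, so their harmonic forms are smooth by elliptic regularity and the formal adjoints $D_{a,b}^*$, $D_{c,e}^*$ act as genuine Hilbert-space adjoints with no boundary contributions, which legitimises the integration by parts.

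For the Bott-Chern equivalence, the forward direction is immediate from the expansion
$$\langle\Delta_{BC_{a,b,c,e}}\alpha,\alpha\rangle \;=\; \|(D_{a,b}D_{c,e})^*\alpha\|^2+\|D_{a,b}D_{c,e}\alpha\|^2+\|(D_{c,e}^*D_{a,b})^*\alpha\|^2+\|D_{c,e}^*D_{a,b}\alpha\|^2+\|D_{c,e}\alpha\|^2+\|D_{a,b}\alpha\|^2.$$
Vanishing of the left-hand side forces each squared norm to vanish, and in particular one reads off the three listed identities $D_{a,b}\alpha=D_{c,e}\alpha=(D_{a,b}D_{c,e})^*\alpha=0$. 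Conversely, assuming these three hold, the remaining three summands are annihilated by a single composition step: $D_{a,b}D_{c,e}\alpha=D_{a,b}\cdot 0=0$, $D_{c,e}^*D_{a,b}\alpha=D_{c,e}^*\cdot 0=0$, and $(D_{c,e}^*D_{a,b})^*\alpha=D_{a,b}^*D_{c,e}\alpha=D_{a,b}^*\cdot 0=0$, so every summand in $\Delta_{BC_{a,b,c,e}}\alpha$ vanishes and $\alpha$ is Bott-Chern harmonic.

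The Aeppli case is strictly analogous: $\langle\Delta_{A_{a,b,c,e}}\alpha,\alpha\rangle$ decomposes as a sum of six squared norms including $\|D_{a,b}^*\alpha\|^2$, $\|D_{c,e}^*\alpha\|^2$ and $\|D_{a,b}D_{c,e}\alpha\|^2$, and the remaining three summands are forced to vanish once those do, for instance $(D_{a,b}D_{c,e})^*\alpha=D_{c,e}^*D_{a,b}^*\alpha=0$ as soon as $D_{a,b}^*\alpha=0$. The only genuinely delicate point, and the one I would spell out carefully, is this reduction from six conditions to three: it is the content of the proposition, and it rests on the elementary observation that in each omitted composite the operator $D_{a,b}$, $D_{c,e}$, $D_{a,b}^*$ or $D_{c,e}^*$ sits at the right end and is applied directly to $\alpha$, so vanishing of the first-order expressions propagates automatically. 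No appeal to the hypothesis $ae=bc$ is required in this characterisation beyond its role in guaranteeing that the double complex, and hence the Bott-Chern and Aeppli Laplacians, were well-defined to begin with.
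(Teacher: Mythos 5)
Your proposal is correct and is exactly the "direct calculation" the paper has in mind: pair the Laplacian with $\alpha$, read off the six squared $L^2$-norms, and observe that the three remaining composite terms vanish automatically once the three listed first-order conditions hold. The appeal to elliptic regularity is superfluous (harmonic forms are already smooth elements of $A^k(X)$, so integration by parts is just the definition of the formal adjoint on a compact manifold), but this does not affect the argument.
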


By the elliptic operators theory we get the following

\begin{theorem}\label{thm:BCA-harmonic}
Let $(X,J,g)$ be a compact almost-Hermitian manifold. If $ae=bc$ and $(a,b)\neq (c,e)$ then
the following Hodge decompositions hold, for every $k$,
$$
A^k(X)=\mathcal{H}^k_{BC(D_{a,b},D_{c,e})}(X)\oplus D_{a,b}D_{c,e} A^{k-2}(X)\oplus
(D_{c,e}^* A^{k+1}(X)+D_{a,b}^* A^{k+1}(X))\,,
$$
$$
A^k(X)=\mathcal{H}^k_{A(D_{a,b},D_{c,e})}(X)\oplus 
(D_{a,b} A^{k-1}(X)+D_{c,e} A^{k-1}(X))\oplus
((D_{a,b}D_{c,e})^* A^{k+2}(X))\,.
$$
Moreover, the spaces $\mathcal{H}^\bullet_{BC(D_{a,b},D_{c,e})}(X)$ and
$\mathcal{H}^\bullet_{A(D_{a,b},D_{c,e})}(X)$ are finite-dimensional.
\end{theorem}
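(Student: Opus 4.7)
The plan is to follow the standard Hodge-theoretic template for fourth-order Laplacians of double complexes, as in Schweitzer \cite{schweitzer}, adapted to the operators $D_{a,b}$ and $D_{c,e}$. I organize the argument in three stages: establish ellipticity, invoke elliptic operator theory to obtain a general Hodge-type splitting, and then identify the image of the Laplacian with the asserted sum of operator images.

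First, for ellipticity of $\Delta_{BC_{a,b,c,e}}$ and $\Delta_{A_{a,b,c,e}}$, the key observation is that $\mu$ and $\bar\mu$ are $\mathcal{C}^\infty$-linear, hence of order zero, so the principal symbol of $D_{a,b}$ coincides with that of $a\delbar+b\del$. Consequently the top-order principal symbol of $\Delta_{BC_{a,b,c,e}}$ is identical to that of the classical Bott--Chern Laplacian built from the pair $(a\delbar+b\del,\, c\delbar+e\del)$, and similarly for $\Delta_{A_{a,b,c,e}}$. Schweitzer's argument in \cite{schweitzer} uses only the relations $d_i^2=0$ at the symbol level together with non-degeneracy of the symbol of $d_1+d_2$, which hold here, so the same reasoning delivers ellipticity of both operators.

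Second, since both Laplacians are elliptic and formally self-adjoint, standard elliptic theory on the compact manifold $X$ immediately yields that $\mathcal{H}^\bullet_{BC(D_{a,b},D_{c,e})}(X)$ and $\mathcal{H}^\bullet_{A(D_{a,b},D_{c,e})}(X)$ are finite-dimensional, that the images of the Laplacians are closed, and that each $A^k(X)$ decomposes $L^2$-orthogonally as the kernel of the Laplacian plus its image. Third, I identify these images. The inclusion
$$
\mathrm{Im}\,\Delta_{BC_{a,b,c,e}}\subseteq D_{a,b}D_{c,e}A^{k-2}(X)+D_{a,b}^*A^{k+1}(X)+D_{c,e}^*A^{k+1}(X)
$$
is immediate from the explicit form of the Laplacian (each summand of $\Delta_{BC}\alpha$ sits in one of these three subspaces). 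For the reverse inclusion, I use the characterization of $BC$-harmonic forms from the preceding proposition, namely $D_{a,b}\alpha = D_{c,e}\alpha = (D_{a,b}D_{c,e})^*\alpha = 0$, to verify $L^2$-orthogonality of each of the three summands to $\mathcal{H}^k_{BC}$ via elementary pairings such as $\langle D_{a,b}D_{c,e}\beta,\alpha\rangle=\langle\beta,(D_{a,b}D_{c,e})^*\alpha\rangle=0$ and $\langle D_{a,b}^*\gamma,\alpha\rangle=\langle\gamma,D_{a,b}\alpha\rangle=0$. Combined with the splitting from the second step this forces equality. Orthogonality between $D_{a,b}D_{c,e}A^{k-2}(X)$ and $D_{a,b}^*A^{k+1}(X)+D_{c,e}^*A^{k+1}(X)$, needed for directness, follows from $D_{a,b}^2=D_{c,e}^2=0$ together with the anticommutation relation $D_{a,b}D_{c,e}+D_{c,e}D_{a,b}=0$ (e.g.\ $\langle D_{a,b}D_{c,e}\beta,D_{c,e}^*\gamma\rangle=-\langle D_{a,b}D_{c,e}^2\beta,\gamma\rangle=0$). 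The Aeppli decomposition is obtained by the symmetric argument, with the roles of $D_{a,b}D_{c,e}$ and $(D_{a,b}D_{c,e})^*$ interchanged and using the analogous harmonic form characterization for $\mathcal{H}^k_{A}$.

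The main obstacle is the first step: checking that the principal symbols of these fourth-order operators are genuinely elliptic. The subtlety is that the condition $ae=bc$ forces the top-order symbols of $D_{a,b}$ and $D_{c,e}$ to be proportional, so cancellations occur inside $\sigma(D_{a,b}D_{c,e})$; one must therefore verify that the full symbol, which also involves mixed terms of the form $\sigma(D_{c,e}^*D_{a,b})$ and the second-order corrections $D_{c,e}^*D_{c,e}+D_{a,b}^*D_{a,b}$, remains positive definite on nonzero covectors. Once this is confirmed along the lines of \cite{schweitzer}, the remaining two stages are a routine adaptation of the classical argument.
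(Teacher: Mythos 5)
Your overall template is the same as the paper's (assert ellipticity of the two fourth-order Laplacians, then quote elliptic theory as in \cite{schweitzer}), and your second and third stages would indeed be routine if the first stage held. But the obstacle you flag at the end and defer (``once this is confirmed'') is not a technical verification --- it is exactly where the argument collapses, and no refinement of Schweitzer's symbol computation can repair it. The hypothesis $ae=bc$ with $a,b,c,e\neq 0$ forces $(c,e)=\lambda (a,b)$ with $\lambda=c/a=e/b$, and then not only the principal symbols but the operators themselves are proportional: $\tfrac{e^2}{c}=\lambda\tfrac{b^2}{a}$ and $\tfrac{c^2}{e}=\lambda\tfrac{a^2}{b}$, hence $D_{c,e}=\lambda D_{a,b}$. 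Consequently $D_{a,b}D_{c,e}=\lambda D_{a,b}^2=0$, and writing $D=D_{a,b}$ the Bott--Chern Laplacian reduces to $\Delta_{BC_{a,b,c,e}}=2|\lambda|^2(D^*D)^2+(1+|\lambda|^2)D^*D$. Its fourth-order principal symbol at $\xi\neq0$ is a positive multiple of $(\sigma^*\sigma)^2$, where $\sigma$ is exterior multiplication by the nonzero $1$-form $a\xi^{0,1}+b\xi^{1,0}$; since $\sigma$ is nilpotent, $\ker(\sigma^*\sigma)=\ker\sigma\neq0$, so the operator is not elliptic (Schweitzer's argument genuinely needs the two symbols $\xi^{1,0}\wedge\,$ and $\xi^{0,1}\wedge\,$ to be linearly independent, not merely the nondegeneracy of $\sigma(d_1+d_2)$, and here the two symbols are proportional). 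Worse, pairing $\Delta_{BC_{a,b,c,e}}\alpha$ with $\alpha$ shows $\Ker\Delta_{BC_{a,b,c,e}}=\Ker D_{a,b}$, which contains $\mathrm{Im}\,D_{a,b}$ and is therefore infinite-dimensional on any compact almost-Hermitian manifold; similarly $\Ker\Delta_{A_{a,b,c,e}}=\Ker D^*_{a,b}$. So the finite-dimensionality assertion cannot be obtained by your route or any other: under the stated hypotheses it fails, and the paper's own one-line appeal to ellipticity is affected by the same degeneration.

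What does survive is the pair of displayed splittings, but only in a trivialized form: since $D_{a,b}D_{c,e}=0$ and $D^*_{c,e}A^{k+1}(X)+D^*_{a,b}A^{k+1}(X)=D^*_{a,b}A^{k+1}(X)$, the first decomposition reduces to $A^k(X)=\Ker D_{a,b}\oplus D^*_{a,b}A^{k+1}(X)$, which follows from Theorem \ref{thm:Dab-harmonic} for the genuinely elliptic second-order operator $\Delta_{a,b}$, and dually for the Aeppli statement. Your orthogonality computations in stage three are correct as far as they go, but they cannot yield finite-dimensional harmonic spaces, because the harmonicity conditions of the preceding proposition collapse to the single equation $D_{a,b}\alpha=0$ (respectively $D^*_{a,b}\alpha=0$). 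If you want a correct and complete argument for what is salvageable, the $\Delta_{a,b}$-Hodge theory is the only mechanism available; the claim of finite-dimensionality of $\mathcal{H}^\bullet_{BC(D_{a,b},D_{c,e})}(X)$ and $\mathcal{H}^\bullet_{A(D_{a,b},D_{c,e})}(X)$ should be dropped or the hypotheses changed so that the two differentials are not forced to be proportional.
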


One has the following

\begin{theorem}\label{thm:BCA-harmonic-BCA-cohomology}
Let $(X,J,g)$ be a compact almost-Hermitian manifold, then there exist
isomorphisms, for every $k$,
$$
H^k_{BC(D_{a,b},D_{c,e})}(X)\simeq\mathcal{H}^k_{BC(D_{a,b},D_{c,e})}(X)\,,
$$
and
$$
H^k_{A(D_{a,b},D_{c,e})}(X)\simeq\mathcal{H}^k_{A(D_{a,b},D_{c,e})}(X)\,.
$$
In particular, the spaces $H^\bullet_{BC(D_{a,b},D_{c,e})}(X)$ and
$H^\bullet_{A(D_{a,b},D_{c,e})}(X)$ are finite-dimensional.
\end{theorem}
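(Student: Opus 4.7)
The plan is to apply the standard Hodge-theoretic argument to the Hodge decompositions already provided by Theorem~\ref{thm:BCA-harmonic}, with the isomorphism in each case realised by sending a harmonic form to its cohomology class. Both the Bott-Chern and the Aeppli cases will reduce to showing that this map is well-defined, injective by orthogonality, and surjective via a short orthogonality-plus-adjointness computation.

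For the Bott-Chern part, I would first observe that every $h\in\mathcal{H}^k_{BC(D_{a,b},D_{c,e})}(X)$ satisfies $D_{a,b}h=D_{c,e}h=0$ by the characterisation of $BC$-harmonic forms given in the proposition preceding Theorem~\ref{thm:BCA-harmonic}, so $h$ represents a class in $H^k_{BC(D_{a,b},D_{c,e})}(X)$. Injectivity of $h\mapsto [h]$ is immediate: if $h=D_{a,b}D_{c,e}\eta$, then $h$ lies in two orthogonal summands of the decomposition in Theorem~\ref{thm:BCA-harmonic} and hence is zero. For surjectivity, given $\alpha\in\Ker D_{a,b}\cap\Ker D_{c,e}$, I decompose
\[
\alpha=h+D_{a,b}D_{c,e}\beta+(D_{c,e}^*\gamma_1+D_{a,b}^*\gamma_2),
\]
and use orthogonality of the three summands, which lets me replace one factor of the last piece by $\alpha$ itself:
\[
\|D_{c,e}^*\gamma_1+D_{a,b}^*\gamma_2\|^2=\langle D_{c,e}^*\gamma_1+D_{a,b}^*\gamma_2,\alpha\rangle=\langle\gamma_1,D_{c,e}\alpha\rangle+\langle\gamma_2,D_{a,b}\alpha\rangle=0.
\]
Consequently $\alpha-h=D_{a,b}D_{c,e}\beta$ and $[\alpha]=[h]$ in $H^k_{BC(D_{a,b},D_{c,e})}(X)$.

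The Aeppli part follows by a dual argument: every $h\in\mathcal{H}^k_{A(D_{a,b},D_{c,e})}(X)$ satisfies $D_{a,b}D_{c,e}h=0$ by the analogous characterisation, defining an Aeppli class; injectivity is again orthogonality between $\mathcal{H}^k_{A(D_{a,b},D_{c,e})}(X)$ and $\Imm D_{a,b}+\Imm D_{c,e}$. For surjectivity I decompose $\alpha\in\Ker(D_{a,b}D_{c,e})$ as $\alpha=h+D_{a,b}\beta_1+D_{c,e}\beta_2+(D_{a,b}D_{c,e})^*\eta$ and compute
\[
\|(D_{a,b}D_{c,e})^*\eta\|^2=\langle (D_{a,b}D_{c,e})^*\eta,\alpha\rangle=\langle\eta,D_{a,b}D_{c,e}\alpha\rangle=0,
\]
so $[\alpha]=[h]$ in $H^k_{A(D_{a,b},D_{c,e})}(X)$. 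Finite-dimensionality of both cohomology groups is then inherited from the finite-dimensionality of the corresponding harmonic spaces already established in Theorem~\ref{thm:BCA-harmonic}. I do not expect serious obstacles: the only place where the structural hypotheses enter is in verifying that $D_{a,b}^2=D_{c,e}^2=0$ and the anticommutation $D_{a,b}D_{c,e}+D_{c,e}D_{a,b}=0$ from Proposition~\ref{prop:anticommutation-relation} (which requires $ae=bc$) make the Hodge decomposition of Theorem~\ref{thm:BCA-harmonic} compatible with passing $D_{a,b}$ and $D_{c,e}$ across the inner product, and this has already been used in establishing that decomposition.
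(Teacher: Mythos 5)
Your proof is correct and follows exactly the route the paper intends: the paper omits the argument, deferring to standard elliptic theory and Schweitzer's treatment of Bott--Chern/Aeppli Hodge theory, and your well-definedness/injectivity/surjectivity argument via the decompositions of Theorem~\ref{thm:BCA-harmonic} (with the orthogonality checks relying on $D_{a,b}^2=D_{c,e}^2=0$ and the anticommutation from Proposition~\ref{prop:anticommutation-relation}) is precisely that standard argument.
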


\medskip

However, under some hypothesis on the parameters $a,b$ we can write down an explicit isomorphism.

\begin{prop}\label{prop:isom-parametric-derham}
Let $(X,J,g)$ be a compact almost-Hermitian manifold of dimension $2n$. Let
$a,b\in\mathbb{C}\setminus\left\lbrace 0\right\rbrace$ such that
$\vert a\vert=\vert b\vert$, then there exists an isomoprhism
$$
\text{Ker}\,\Delta_d\quad\simeq\quad \text{Ker}\,\Delta_{a,b}
$$
given by
$$
\alpha\mapsto\sum_{p+q=k}\left(\frac{a}{b}\right)^q\alpha^{p,q}
$$
where $\alpha^{p,q}$ denotes the $(p,q)$-component of a $k$-form $\alpha$.
\end{prop}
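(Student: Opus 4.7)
My plan is to prove the proposition by showing that the stated map is the natural intertwiner and that it conjugates the two Laplacians up to a positive scalar. Define $\Phi:A^k(X)\to A^k(X)$ by
$$
\Phi(\alpha)\;:=\;\sum_{p+q=k}\left(\frac{a}{b}\right)^{q}\alpha^{p,q}.
$$
Since every factor $(a/b)^q$ is nonzero, $\Phi$ is a linear automorphism of $A^k(X)$. The entire proof reduces to establishing
$$
\Delta_{a,b}\circ\Phi \;=\; |b|^{2}\,\Phi\circ\Delta_{d}
$$
on each $A^k(X)$, from which the isomorphism $\Ker\,\Delta_d\simeq\Ker\,\Delta_{a,b}$ induced by $\Phi$ follows immediately.

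The first identity to verify is $D_{a,b}\circ\Phi=b\,(\Phi\circ d)$. Applied to a pure $(p,q)$-form $\alpha$, the left-hand side equals $(a/b)^q\bigl(a\,\bar\partial\alpha+b\,\partial\alpha+\tfrac{b^2}{a}\mu\alpha+\tfrac{a^2}{b}\bar\mu\alpha\bigr)$, while the right-hand side is $b$ times the expression in which the four components of $d\alpha$, of bidegrees $(p,q+1),(p+1,q),(p+2,q-1),(p-1,q+2)$, are rescaled by $(a/b)^{q+1},(a/b)^{q},(a/b)^{q-1},(a/b)^{q+2}$ respectively. A term-by-term comparison reduces to four trivial cancellations of monomials in $a$ and $b$; this step is pure bookkeeping.

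The essential step is the adjoint intertwining $D_{a,b}^{*}\circ\Phi=\bar b\,(\Phi\circ d^{*})$, and this is exactly where the hypothesis $|a|=|b|$ enters. Since $\Phi$ multiplies each $(p,q)$-component by $(a/b)^{q}$, its $L^{2}$-adjoint $\Phi^{*}$ multiplies it by $\overline{(a/b)^{q}}=(\bar a/\bar b)^{q}$, and the equality $\bar a/\bar b=b/a$ holds if and only if $|a|^{2}=|b|^{2}$. Hence $|a|=|b|$ forces $\Phi^{*}=\Phi^{-1}$, i.e.\ $\Phi$ is $L^{2}$-unitary, so taking the adjoint of the first identity yields the second one at once. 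Combining both gives $\Delta_{a,b}\circ\Phi=|b|^{2}\,\Phi\circ\Delta_{d}$ and produces the claimed bijection on harmonic forms in every total degree $k$. The principal obstacle is this very point: away from $|a|=|b|$ the conjugation of $d^{*}$ by $\Phi$ produces scalars that do not match those of $D_{a,b}^{*}$ and the intertwining of Laplacians breaks, so the hypothesis is intrinsic to the argument rather than a convenience.
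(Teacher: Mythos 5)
Your proof is correct and takes essentially the same route as the paper: the same rescaling map $\Phi$, with your identity $D_{a,b}\circ\Phi=b\,\Phi\circ d$ encoding exactly the paper's component-by-component system for $d$-closed forms. The only (welcome) streamlining is in the adjoint half: where the paper repeats the computation for $d^*$ with weights $(\bar b/\bar a)^q$ and then notes that the two weighted forms coincide precisely when $\vert a\vert=\vert b\vert$, you get $D_{a,b}^*\circ\Phi=\bar b\,\Phi\circ d^*$ formally from the unitarity of $\Phi$ (the same point at which the hypothesis enters in the paper), and the resulting conjugation $\Delta_{a,b}\circ\Phi=\vert b\vert^{2}\,\Phi\circ\Delta_d$ gives the bijection $\Ker\Delta_d\simeq\Ker\Delta_{a,b}$ at once.
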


\begin{proof}
Let $\alpha=\sum_{p+q=k}\alpha^{p,q}$ be a $d$-closed $k$-form, namely
Hence $\mu\alpha+\del\alpha+\delbar\alpha+\bar\mu\alpha=0$.
Then, by bi-degree reasons
$$
\left\lbrace
\begin{array}{lcl}
\mu\,\alpha^{p+q,0} & =& 0\\
\del\,\alpha^{p+q,0} & =& -\mu\,\alpha^{p+q-1,1}\\
\del\,\alpha^{p+q-1,1} & =& -\delbar\,\alpha^{p+q,0} -\mu\,\alpha^{p+q-2,2}\\
\del\,\alpha^{p+q-2,2} & =& -\delbar\,\alpha^{p+q-1,1} 
-\mu\,\alpha^{p+q-3,3}-\bar\mu\,\alpha^{p+q,0}\\
\vdots & \vdots & \vdots \\
\del\,\alpha^{1,p+q-1} & =& -\delbar\,\alpha^{2,p+q-2} 
-\mu\,\alpha^{0,p+q}-\bar\mu\,\alpha^{3,p+q-3}\\
\del\,\alpha^{0,p+q} & =& -\delbar\,\alpha^{1,p+q-1} 
-\bar\mu\,\alpha^{2,p+q-2}\\
\delbar\,\alpha^{0,p+q} & =& -\bar\mu\,\alpha^{1,p+q-1}\\
\bar\mu\,\alpha^{0,p+q} & =& 0
\end{array}
\right.\,.
$$
Therefore,
$$
\left\lbrace
\begin{array}{lcl}
\mu\,\alpha^{p+q,0} & =& 0\\
b\del\,\alpha^{p+q,0} & =& -\frac{b^2}{a}\mu\,(\frac{a}{b}\alpha^{p+q-1,1})\\[5pt]
b\del\,(\frac{a}{b}\alpha^{p+q-1,1}) & =& -a\delbar\,\alpha^{p+q,0} -
\frac{b^2}{a}\mu\,(\frac{a^2}{b^2}\alpha^{p+q-2,2})\\[5pt]
b\del\,(\frac{a^2}{b^2}\alpha^{p+q-2,2}) & =& 
-a\delbar\,(\frac{a}{b}\alpha^{p+q-1,1}) 
-\frac{b^2}{a}\mu\,(\frac{a^3}{b^3}\alpha^{p+q-3,3})-
\frac{a^2}{b}\bar\mu\,\alpha^{p+q,0}\\
\vdots & \vdots & \vdots \\
b\del\,(\frac{a^{p+q-1}}{b^{p+q-1}}\alpha^{1,p+q-1}) & =& 
-a\delbar\,(\frac{a^{p+q-2}}{b^{p+q-2}}\alpha^{2,p+q-2} )
-\frac{b^2}{a}\mu\,(\frac{a^{p+q}}{b^{p+q}}\alpha^{0,p+q})
-\frac{a^2}{b}\bar\mu\,(\frac{a^{p+q-3}}{b^{p+q-3}}
\alpha^{3,p+q-3})\\[5pt]
b\del\,(\frac{a^{p+q}}{b^{p+q}}\alpha^{0,p+q}) & =& 
-a\delbar\,(\frac{a^{p+q-1}}{b^{p+q-1}}\alpha^{1,p+q-1}) 
-\frac{a^2}{b}\bar\mu\,(\frac{a^{p+q-2}}{b^{p+q-2}}
\alpha^{2,p+q-2})\\[5pt]
a\delbar\,(\frac{a^{p+q}}{b^{p+q}}\alpha^{0,p+q}) & =&
 -\frac{a^2}{b}\bar\mu\,(\frac{a^{p+q-1}}{b^{p+q-1}}
 \alpha^{1,p+q-1})\\[5pt]
\bar\mu\,\alpha^{0,p+q} & =& 0
\end{array}
\right.\,
$$
Namely, if $d\alpha=0$ then
$$
D_{a,b}\left(\alpha^{p+q,0}+\frac{a}{b}\alpha^{p+q-1,1}+
\frac{a^2}{b^2}\alpha^{p+q-2,2}+\cdots+
\frac{a^{p+q}}{b^{p+q}}\alpha^{0,p+q}\right)=0\,.
$$
Similarly, if $d^*\alpha=0$ then
$$
D_{a,b}^*\left(\alpha^{p+q,0}+\frac{\bar b}{\bar a}\alpha^{p+q-1,1}+
\frac{\bar b^2}{\bar a^2}\alpha^{p+q-2,2}+\cdots+
\frac{\bar b^{p+q}}{\bar a^{p+q}}\alpha^{0,p+q}\right)=0\,.
$$
Therefore if $\vert a\vert^2=\vert b\vert^2$ and $\Delta_d\,\alpha=0$ then 
$$
\alpha^{p+q,0}+\frac{a}{b}\alpha^{p+q-1,1}+
\frac{a^2}{b^2}\alpha^{p+q-2,2}+\cdots+
\frac{a^{p+q}}{b^{p+q}}\alpha^{0,p+q}
$$
is $\Delta_{a,b}$-harmonic.
\end{proof}

\begin{cor}
Let $(X,J,g)$ be a compact almost-Hermitian manifold of dimension $2n$. Let
$a,b\in\mathbb{C}\setminus\left\lbrace 0\right\rbrace$ such that
$\vert a\vert=\vert b\vert$, then there exists an isomoprhism
$$
H_{dR}^\bullet(X)\quad\simeq\quad H_{D_{a,b}}^\bullet(X).
$$
\end{cor}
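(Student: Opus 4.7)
The plan is to chain together the two Hodge isomorphisms with the harmonic-space identification already produced in Proposition \ref{prop:isom-parametric-derham}. By the classical Hodge theorem on a compact Riemannian manifold, $H^k_{dR}(X)\simeq \mathcal{H}^k_d(X):=\Ker\Delta_{d\vert A^k}$, and by Theorem \ref{thm:Dab-harmonic-Dab-cohomology} we already have $H^k_{D_{a,b}}(X)\simeq \mathcal{H}^k_{D_{a,b}}(X)$. Proposition \ref{prop:isom-parametric-derham} gives, under the hypothesis $|a|=|b|$, an explicit linear isomorphism $\mathcal{H}^k_d(X)\to\mathcal{H}^k_{D_{a,b}}(X)$ via
\[
\alpha=\sum_{p+q=k}\alpha^{p,q}\;\longmapsto\;\sum_{p+q=k}\left(\frac{a}{b}\right)^q\alpha^{p,q}.
\]
Composing these three isomorphisms yields the desired $H^\bullet_{dR}(X)\simeq H^\bullet_{D_{a,b}}(X)$.

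The only point that needs a quick verification is that the composition is well-defined at the level of cohomology classes, i.e.\ independent of the choice of harmonic representative. Since both $\mathcal{H}^k_d(X)$ and $\mathcal{H}^k_{D_{a,b}}(X)$ are the canonical harmonic representatives selected by the respective Hodge decompositions (Theorem \ref{thm:Dab-harmonic} for $D_{a,b}$, and the standard one for $d$), this is automatic: each de Rham class has a unique $d$-harmonic representative, to which the map is applied, and its image is the unique $D_{a,b}$-harmonic representative of a well-defined $D_{a,b}$-cohomology class. Linearity and injectivity follow from the isomorphism of harmonic spaces, and surjectivity from comparing dimensions (both harmonic spaces are finite-dimensional and in bijection by Proposition \ref{prop:isom-parametric-derham}).

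I do not expect any genuine obstacle: the content of the corollary is essentially a reformulation of Proposition \ref{prop:isom-parametric-derham} in cohomological language, and Theorem \ref{thm:Dab-harmonic-Dab-cohomology} is precisely what is needed to translate the harmonic statement into a statement on $H^\bullet_{D_{a,b}}(X)$. The only mildly delicate point is keeping track of the fact that the map between harmonic forms need not preserve the grading by $(p,q)$ in any refined sense, only the total degree $k$; but since both cohomology theories are graded by total degree, this is exactly what is required.
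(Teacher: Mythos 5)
Your argument is correct and coincides with the paper's (implicit) reasoning: the corollary is stated without proof precisely because it follows by composing the classical Hodge isomorphism $H^k_{dR}(X)\simeq\Ker\Delta_d$, the isomorphism $\Ker\Delta_d\simeq\Ker\Delta_{a,b}$ of Proposition \ref{prop:isom-parametric-derham}, and the identification $H^k_{D_{a,b}}(X)\simeq\mathcal{H}^k_{D_{a,b}}(X)$ of Theorem \ref{thm:Dab-harmonic-Dab-cohomology}. No further comment is needed.
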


Notice that in case of $D_{i,-i}=d^c$ the isomorphism becomes
$$
\alpha\mapsto\sum_{p+q=k}(-1)^q\alpha^{p,q}=i^{-k}J\alpha\,.
$$

\begin{rem}
If $D_a$ is a real operator, namely $D_a=D_{a,\bar a}$, then by previous corollary
there is an isomoprhism
$$
H_{dR}^\bullet(X)\quad\simeq\quad H_{D_{a}}^\bullet(X)
$$
for any $a\in\mathbb{C}\setminus\left\lbrace 0\right\rbrace$.
\end{rem}

\begin{ex}\label{example:kodaira-thurston-Dab-cohomology}
Let $\mathbb{H}(3;\mathbb{R})$ be the $3$-dimensional Heisenberg group and
$\mathbb{H}(3;\mathbb{Z})$ be the subgroup of matrices with entries in $\mathbb{Z}$.
The Kodaira-Thurston manifold is defined as the quotient
$$
X:=\left(\mathbb{H}(3;\mathbb{R})\times \mathbb{R}\right)/
\left(\mathbb{H}(3;\mathbb{Z})\times \mathbb{Z}\right)\,.
$$
The manifold $X$ is a $4$-dimensional nilmanifold which admits both complex and symplectic structures.
We consider the non-integrable almost-complex structure $J$ defined by the structure equations
$$
\left\lbrace
\begin{array}{lcl}
d\varphi^1 & =& 0\\
d\varphi^2 &=& \frac{1}{2i}\varphi^{12}+\frac{1}{2i}\left(\varphi^{1\bar 2}-\varphi^{2\bar 1}\right)+\frac{1}{2i}\varphi^{\bar 1\bar 2}
\end{array}
\right.\,
$$
where $\left\lbrace\varphi^1\,,\varphi^2\right\rbrace$ is a global co-frame of (1,0)-forms on $X$.\\
Hence, directly we get, for any $a,b\in\mathbb{C}\setminus\left\lbrace 0\right\rbrace$
$$
\left\lbrace
\begin{array}{lcl}
D_{a,b}\varphi^1 & =& 0\\
D_{a,b}\varphi^2 &=& \frac{1}{2i}a\left(\varphi^{1\bar2}-\varphi^{2\bar 1}\right)+\frac{1}{2i}b\varphi^{12}+
\frac{1}{2i}\frac{a^2}{b}\varphi^{\bar1\bar2}
\end{array}
\right.\,.
$$
We fix the $J$-Hermitian metric $\omega:=\frac{1}{2i}\sum_{j=1}^2\varphi^j\wedge\bar\varphi^j$, and by a direct computation one gets on invariant $2$-forms
$$
\text{Ker}\,D_{a,b,inv}=\mathbb{C}\left\langle
\varphi^{1\bar1},\varphi^{2\bar2},
\frac{b}{a}\varphi^{12}+\varphi^{1\bar2},
-\frac{b}{a}\varphi^{12}+\varphi^{2\bar1},
-\frac{b^2}{a^2}\varphi^{12}+\varphi^{\bar1\bar2}
\right\rangle
$$
and
$$
\text{Ker}\,D^*_{a,b,inv}=\mathbb{C}\left\langle
\varphi^{1\bar1},\varphi^{2\bar2},
-\frac{\bar a}{\bar b}\varphi^{12}+\varphi^{1\bar2},
\frac{\bar a}{\bar b}\varphi^{12}+\varphi^{2\bar1},
-\frac{\bar a^2}{\bar b^2}\varphi^{12}+\varphi^{\bar1\bar2}
\right\rangle\,.
$$
Therefore, one gets
$$
H^2_{D_{a,b},\text{inv}}\simeq\mathbb{C}\left\langle
\varphi^{1\bar1},\varphi^{2\bar2},
\varphi^{1\bar 2}+\varphi^{2\bar1},
\varphi^{\bar1\bar2}
-\frac{\vert a\vert^2-\vert b\vert^2}{a\bar b}\varphi^{1\bar2}
-\frac{b\bar a}{a\bar b}\varphi^{12}
\right\rangle\,,
$$
where we listed the harmonic representatives with respect to $\omega$.
In particular, for $a=b=1$ we get the harmonic representatives for the de Rham cohomology and for $a=-b=i$ we get the harmonic representatives for the
$d^c$-cohomology $H^2_{d^c}(X)$.
\end{ex}

\begin{rem}
Notice that if $J$ is integrable then $(A^\bullet(X),D_{a,b},D_{c,e})$ is a double complex for any choice of the parameters (provided $(a,b)\neq(c,e)$) and so one can define accordingly the associated Dolbeault, Bott-Chern and Aeppli cohomologies.
\end{rem}

\section{Differential operators on symplectic manifolds}\label{section:diff-op-sympl}

Let $(X,J,g,\omega)$ be a compact almost-K\"ahler manifold that is an almost-Hermitian manifold with fundamental form $\omega$ $d$-closed.
Then, we can generalize the symplectic cohomologies introduced in \cite{tseng-yau-I}.\\

Let
$$
L:=\omega\wedge-: A^{\bullet}(X)\to A^{\bullet+2}(X)
$$
and
$$
\Lambda:=-\star L\star: A^{\bullet}(X)\to A^{\bullet-2}(X)\,,
$$
where $\star=J*=*J$ is the symplectic-Hodge-$\star$-operator.
Denote with
$$
d^\Lambda:=[d,\Lambda]\,;
$$
since $\omega$ is symplectic we have that
$$
d^\Lambda=(-1)^{k+1}\star d\star_{\vert A^k(X)}
$$
i.e., $d^\Lambda$ is the Brylinski-codifferential (\cite{brylinski}), namely the symplectic adjoint of $d$.
Then, it is well known that $(d^c)^*=-d^\Lambda$, indeed on $k$-forms
$$
(d^c)^*=-*d^c*=-*J^{-1}dJ*=-(-1)^{k+1}*Jd\star=(-1)^k\star d\star=
-d^\Lambda\,.
$$

By the almost-K\"ahler identities (cf. Lemma \ref{lemma:almost-kahler-identities})

\begin{itemize}
\item[$\bullet$] $[\del,\Lambda]=i\,\delbar^*$ and $[\bar\mu,\Lambda]=i\,\mu^*$
\item[$\bullet$] $[\delbar,\Lambda]=-i\,\del^*$ and $[\mu,\Lambda]=-i\,\bar\mu^*$.
\end{itemize} 

one has the following

\begin{lemma}
Let $(X,J,g,\omega)$ be a compact almost-K\"ahler manifold, then for $a,b\in\mathbb{C}\setminus\left\lbrace 0\right\rbrace$,
\begin{itemize}
\item[$\bullet$] $[D_{a,b},L]=0$,
\item[$\bullet$] $[D_{a,b},\Lambda]=-i\,D^*_{-\bar b,\bar a}$.
\end{itemize}
Moreover,
$[D_{a,b},\Lambda]=(-1)^{k+1}\star D_{a,b}\star$ on $k$-forms if and only if
$D_{a,b}$ is a real operator.
\end{lemma}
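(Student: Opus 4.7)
The plan is to treat the three assertions of the lemma separately, reducing each to a short linear-combination argument based on the almost-K\"ahler identities recalled immediately above the statement; only the ``moreover'' requires a careful sign chase involving $J$ and $*$.

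For $[D_{a,b},L]=0$ I would expand linearly,
\[
[D_{a,b},L]=a[\delbar,L]+b[\del,L]+\tfrac{b^2}{a}[\mu,L]+\tfrac{a^2}{b}[\bar\mu,L],
\]
and note that each of $\del,\delbar,\mu,\bar\mu$ is a derivation (project the Leibniz rule for $d$ onto bi-types), hence $[P,L]\alpha=P\omega\wedge\alpha$ for every such $P$. The hypothesis $d\omega=0$ combined with $\omega\in A^{1,1}(X)$ splits into the four bi-type equations $\mu\omega=\del\omega=\delbar\omega=\bar\mu\omega=0$, so every summand vanishes. For $[D_{a,b},\Lambda]=-i\,D^*_{-\bar b,\bar a}$ the same linear expansion together with the four almost-K\"ahler identities recalled above the lemma gives
\[
[D_{a,b},\Lambda] = -i\bigl(-b\,\delbar^* + a\,\del^* - \tfrac{a^2}{b}\mu^* + \tfrac{b^2}{a}\bar\mu^*\bigr),
\]
and the parenthesized expression is exactly $D^*_{-\bar b,\bar a}$ by the defining formula for $D^*_{c,e}$, using $\overline{-\bar b}=-b$, $\overline{\bar a}=a$ and the analogous identities for the two remaining coefficients.

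For the final equivalence I would use the identity $\star T\star = (-1)^k(T^c)^*$ on $k$-forms, valid for any $\mathbb{C}$-linear degree-$1$ operator $T$; this is proved by the same chain as the derivation of $(d^c)^*=(-1)^k\star d\star$ in the excerpt, using $J^{-1}|_{A^{k+1}}=(-1)^{k+1}J$ together with $J*=*J$. Applied to $T=D_{a,b}$ this yields
\[
(-1)^{k+1}\star D_{a,b}\star \;=\; -(D^c_{a,b})^*.
\]
A direct bi-type computation of $D^c_{a,b}=J^{-1}D_{a,b}J$, collecting the $i^{p-q}$ factors produced on each of the four components of $D_{a,b}$, gives the clean identification $D^c_{a,b}=D_{ia,-ib}$. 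Plugging this into the defining formula for $D^*$ yields
\[
-(D^c_{a,b})^* \;=\; i\bigl(\bar a\,\delbar^* - \bar b\,\del^* + \tfrac{\bar b^2}{\bar a}\mu^* - \tfrac{\bar a^2}{\bar b}\bar\mu^*\bigr).
\]
Matching this against the expression for $[D_{a,b},\Lambda]$ derived in the previous paragraph coefficient by coefficient reduces to the single equation $a=\bar b$ (under which the $\mu^*$ and $\bar\mu^*$ coefficients agree automatically), which by Lemma~\ref{lemma:real-operator} is exactly the condition that $D_{a,b}$ be a real operator. The main obstacle is this last sign bookkeeping, in particular the intermediate identification $D^c_{a,b}=D_{ia,-ib}$ and the correct power of $(-1)^k$ in $\star T\star=(-1)^k(T^c)^*$; everything else reduces to a linear substitution of known identities.
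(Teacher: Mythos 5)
Your proposal is correct and follows essentially the same route as the paper: expand $[D_{a,b},\Lambda]$ linearly, substitute the four almost-K\"ahler commutation identities, recognize the result as $-i\,D^*_{-\bar b,\bar a}$, and for the last claim compare $(-1)^{k+1}\star D_{a,b}\star$ (whose coefficients get conjugated by the anti-linear $\star$) with the previous expression, reducing to $a=\bar b$ and hence to the real-operator criterion. The only differences are cosmetic: you also spell out the first bullet via $\mu\omega=\del\omega=\delbar\omega=\bar\mu\omega=0$, which the paper leaves implicit, and you organize the sign bookkeeping of the ``moreover'' through the identity $\star T\star=(-1)^k(T^c)^*$ and $D^c_{a,b}=D_{ia,-ib}$, both of which check out.
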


\begin{proof}
By direct computations using the almost-K\"ahler identities
$$
[D_{a,b},\Lambda]=a[\delbar,\Lambda]+b[\del,\Lambda]+
\frac{b^2}{a}[\mu,\Lambda]+\frac{a^2}{b}[\bar\mu,\Lambda]=
$$
$$
=-ia\del^*+ib\delbar^*-i\frac{b^2}{a}\bar\mu^*+i\frac{a^2}{b}\mu^*=
-i\,D^*_{-\bar b,\bar a}\,.
$$
Moreover, notice that
$$
\star D_{a,b}\star=\bar a \star\delbar\star+\bar b\star\del\star+
\frac{\bar b^2}{\bar a}\star\mu\star+ \frac{\bar a^2}{\bar b}\star\bar\mu\star
$$
hence, $[D_{a,b},\Lambda]=(-1)^{k+1}\star D_{a,b}\star$ if and only if
$a=\bar b$ if and only if $D_{a,b}$ is a real operator by Lemma
\ref{lemma:real-operator}.
\end{proof}

As a consequence, we denote
$$
D_a^\Lambda:=[D_{a},\Lambda]=(-1)^{k+1}\star D_{a}\star_{\vert_{A^k(X)}}\,.
$$
This operator generalizes the Brylinski co-differential, indeed
$$
D_1^\Lambda=d^\Lambda\,.
$$
In fact using $D_a^\Lambda:=[D_{a},\Lambda]$ and $D_a^2=0$ we have that
$$
D_aD_a^\Lambda+D_a^\Lambda D_a=0\quad\text{and}\quad
(D_a^\Lambda)^2=0.
$$
In particular, for $a=1$ we recover the standard relations
$$
dd^\Lambda+d^\Lambda d=0\quad\text{and}\quad
(d^\Lambda)^2=0.
$$
Therefore, one can define
$$
H^\bullet_{D_a^\Lambda}:=
\frac{\text{Ker}\,D_a^\Lambda}{\text{Im}\,D_a^\Lambda},\quad
H_{BC(D_{a},D_a^\Lambda)}^\bullet(X):=
\frac{\text{Ker}\,D_{a}\cap\text{Ker}\,D_a^\Lambda}
{\text{Im}\,D_{a}D_a^\Lambda},\quad
H_{A(D_{a},D_a^\Lambda)}^\bullet(X):=
\frac{\text{Ker}\,D_{a}D_a^\Lambda}
{\text{Im}\,D_{a}+\text{Im}\,D_a^\Lambda}.
$$
The symplectic cohomologies defined in \cite{tseng-yau-I} correspond to the parameter $a=1$.

\section{Harmonic forms on almost-Hermitian manifolds}\label{section:harmonic-forms-almost-hermitian}

In the following we try to generalize the spaces of harmonic forms for the
Dolbeault, Bott-Chern and Aeppli cohomology groups of complex manifolds using the intrinsic decomposition of $d$ induced by the almost-complex structure. However, for a non-integrable almost-complex structure we do not have a cohomological counterpart (cf. also \cite{cirici-wilson-1}, \cite{cirici-wilson-2}).

Let $(X,\,J,\,g)$ be an almost-Hermitian manifold that means $X$ is a smooth manifold endowed with an almost complex structure $J$ and a
$J$-Hermitian metric $g$. As above denote with $*$ the associated Hodge-$*$-operator. 
Consequently,
$$
\delta^*=\del^*+\bar\mu^*\,,\qquad \bar\delta^*=\delbar^*+\mu^*
$$
and
$$
(d^c)^*=i(\delta^*-\bar\delta^*)=i(\del^*+\bar\mu^*-\delbar^*-\mu^*)\,.
$$

We define the following differential operators
$$
\Delta_{\bar\delta}:=\bar\delta\bar\delta^*+\bar\delta^*\bar\delta\,,
$$
$$
\Delta_{\delta}:=\delta\delta^*+\delta^*\delta\,,
$$
$$
\Delta_{BC(\delta,\bar\delta)}:=
(\delta\bar\delta)(\delta\bar\delta)^*+
(\delta\bar\delta)^*(\delta\bar\delta)+
(\bar\delta^*\delta)(\bar\delta^*\delta)^*+
(\bar\delta^*\delta)^*(\bar\delta^*\delta)+
\bar\delta^*\bar\delta+\delta^*\delta\,,
$$
$$\Delta_{A(\delta,\bar\delta)} \;:=\; \delta\delta^*+\deltabar\deltabar^*+(\delta\deltabar)^*(\delta\deltabar)+
(\delta\deltabar)(\delta\deltabar)^*+
(\deltabar\delta^*)^*(\deltabar\delta^*)+
(\deltabar\delta^*)(\deltabar\delta^*)^*\,.
$$

\begin{rem}
Notice that if $J$ is an integrable almost-complex structure
then these differential operators coincide with the classical Laplacian operators on complex manifolds, namely
the Dolbeault Laplacians
$$
\Delta_{\delbar}:=\delbar\delbar^*+\delbar^*\delbar\,,
$$
$$
\Delta_{\delta}:=\del\del^*+\del^*\del\,,
$$
and the Bott-Chern and Aeppli Laplacians
$$
\Delta_{BC}=(\del\delbar)(\del\delbar)^*+
(\del\delbar)^*(\del\delbar)+
(\delbar^*\del)(\delbar^*\del)^*+
(\delbar^*\del)^*(\delbar^*\del)+
\delbar^*\delbar+\del^*\del\,,
$$
$$\Delta_{A} \;:=\; \del\del^*+\delbar\delbar^*+(\del\delbar)^*(\del\delbar)+(\del\delbar)(\del\delbar)^*+(\delbar\del^*)^*(\delbar\del^*)+(\delbar\del^*)(\delbar\del^*)^*\,.
$$
\end{rem}

We have the following

\begin{prop}\label{prop:ellipticity-deltabar}
Let $(X,J,g)$ be an almost-Hermitian manifold, then the operators
$\Delta_{\bar\delta}$ and $\Delta_{\delta}$ are elliptic differential operators of the second order.
\end{prop}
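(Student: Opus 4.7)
The strategy is the same one the authors used for the previous lemma showing that $\Delta_{a,b}$ is elliptic, namely comparing principal symbols with those of the classical Dolbeault Laplacians. The plan is to observe that $\mu$ and $\bar\mu$ are $C^{\infty}(X)$-linear (they are the components of the Nijenhuis tensor, hence of order zero), whereas $\del$ and $\delbar$ are honest first-order differential operators. Consequently
\[
\bar\delta = \delbar + \mu, \qquad \bar\delta^{*} = \delbar^{*} + \mu^{*}
\]
are first-order differential operators whose principal symbols agree with those of $\delbar$ and $\delbar^{*}$ respectively, and similarly for $\delta$ and $\delta^{*}$.

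Next I would compute the principal symbol of $\Delta_{\bar\delta}$. Since the principal symbol is multiplicative under composition and additive in the top-order piece,
\[
\sigma_{2}(\Delta_{\bar\delta}) \;=\; \sigma_{1}(\bar\delta)\sigma_{1}(\bar\delta^{*}) + \sigma_{1}(\bar\delta^{*})\sigma_{1}(\bar\delta) \;=\; \sigma_{1}(\delbar)\sigma_{1}(\delbar^{*}) + \sigma_{1}(\delbar^{*})\sigma_{1}(\delbar) \;=\; \sigma_{2}(\Delta_{\delbar}),
\]
because the cross-terms $\delbar\mu^{*}$, $\mu\delbar^{*}$, $\mu\mu^{*}$ and so on are all of order at most one and therefore do not contribute to $\sigma_{2}$. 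The analogous identity holds for $\Delta_{\delta}$. Since the Dolbeault Laplacians $\Delta_{\delbar}$ and $\Delta_{\del}$ are elliptic on any almost-Hermitian manifold (as already recalled in Section \ref{section:preliminaries}), their principal symbols are invertible for every nonzero cotangent vector, and this property is inherited by $\Delta_{\bar\delta}$ and $\Delta_{\delta}$.

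Finally, one should check that $\Delta_{\bar\delta}$ and $\Delta_{\delta}$ are genuinely of order two and not lower: since they agree in top order with the elliptic Laplacians $\Delta_{\delbar}$ and $\Delta_{\del}$, whose principal symbols are nonzero, this is automatic. There is no real obstacle here; the only point requiring a small amount of care is ensuring that the zero-order operators $\mu,\bar\mu,\mu^{*},\bar\mu^{*}$ do not leak into the top-order symbol, which is exactly what the linearity of $\mu,\bar\mu$ over functions guarantees. This mirrors verbatim the symbol computation carried out in the proof that $\Delta_{a,b}$ is elliptic.
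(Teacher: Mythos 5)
Your proposal is correct and follows essentially the same route as the paper: the authors also argue that $\Delta_{\bar\delta}$ is a lower-order perturbation of $\Delta_{\delbar}$, since $\mu,\bar\mu$ (being linear over functions) do not affect the principal symbol, so $\sigma_2(\Delta_{\bar\delta})=\sigma_2(\Delta_{\delbar})$ and ellipticity follows, with the analogous argument for $\Delta_{\delta}$. No gaps to report.
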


\begin{proof}
The operator $\Delta_{\bar\delta}$ is elliptic, indeed it is a
lower order perturbation of its integrable counterpart.
More precisely, denoting with $\simeq$ the equivalence of the symbol of the operators we have
$$
\Delta_{\bar\delta}=
\deltabar^*\deltabar+\deltabar\deltabar^*\simeq
\delbar\delbar^*+\delbar^*\delbar
=\Delta_{\delbar}\,.
$$
Similar considerations can be done for $\Delta_{\delta}$.
\end{proof}

We denote with $\mathcal{H}^k_{\bar\delta}(X):=\Ker\Delta_{\bar\delta_{\vert A^k(X)}}$ the space of $\bar\delta$-harmonic $k$-forms and with
$\mathcal{H}^{p,q}_{\bar\delta}(X):=\Ker\Delta_{\bar\delta_{\vert A^{p,q}(X)}}$ the space of $\bar\delta$-harmonic $(p,q)$-forms, and similarly for the operator $\delta$.
We get the following

\begin{theorem}\label{thm:deltabar-harmonic}
Let $(X,J,g)$ be a compact almost-Hermitian manifold, then
the following Hodge decompositions hold
$$
A^k(X)=\mathcal{H}^k_{\bar\delta}(X)\oplus\bar\delta A^{k-1}(X)\oplus
\bar\delta^* A^{k+1}(X)
$$
and
$$
A^k(X)=\mathcal{H}^k_{\delta}(X)\oplus\delta A^{k-1}(X)\oplus
\delta^* A^{k+1}(X)
$$
Moreover, a $(p,q)$-form $\alpha\in\mathcal{H}^{p,q}_{\bar\delta}(X)$ if and only if
$\alpha\in\mathcal{H}^{p,q}_{\delbar}\cap\mathcal{H}^{p,q}_{\mu}$.
Similarly, a $(p,q)$-form $\alpha\in\mathcal{H}^{p,q}_{\delta}(X)$ if and only if
$\alpha\in\mathcal{H}^{p,q}_{\del}\cap\mathcal{H}^{p,q}_{\bar\mu}$.
\end{theorem}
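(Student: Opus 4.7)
My plan is to apply the standard Hodge theory for compact self-adjoint elliptic operators, using Proposition \ref{prop:ellipticity-deltabar}, and then read off the bi-degree characterisation by splitting $\bar\delta$ and $\delta$ into their pure-bidegree components.

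For the Hodge decompositions, I first verify that $\mathcal{H}^k_{\bar\delta}(X) = \Ker\bar\delta \cap \Ker\bar\delta^*$: pairing yields
$\langle \Delta_{\bar\delta}\alpha,\alpha\rangle = \|\bar\delta\alpha\|^2 + \|\bar\delta^*\alpha\|^2$,
so $\Delta_{\bar\delta}\alpha = 0$ forces $\bar\delta\alpha = \bar\delta^*\alpha = 0$, with the converse immediate; note this argument nowhere uses $\bar\delta^2=0$. Proposition \ref{prop:ellipticity-deltabar} says that $\Delta_{\bar\delta}$ is self-adjoint and elliptic on $A^k(X)$, hence by standard theory $\mathcal{H}^k_{\bar\delta}(X)$ is finite-dimensional and one has the orthogonal splitting $A^k(X) = \mathcal{H}^k_{\bar\delta}(X) \oplus \Imm\,\Delta_{\bar\delta}$. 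The inclusion $\Imm\,\Delta_{\bar\delta}\subseteq \bar\delta A^{k-1}(X) + \bar\delta^*A^{k+1}(X)$ is immediate from the definition of $\Delta_{\bar\delta}$; the reverse inclusion follows because both $\bar\delta A^{k-1}(X)$ and $\bar\delta^* A^{k+1}(X)$ are orthogonal to $\mathcal{H}^k_{\bar\delta}(X)$ (using $\bar\delta^*\alpha=0$ and $\bar\delta\alpha=0$ respectively for $\alpha$ harmonic), hence lie in $\mathcal{H}^k_{\bar\delta}(X)^\perp = \Imm\,\Delta_{\bar\delta}$. The argument for $\delta$ is verbatim.

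For the bi-degree characterisation, the key observation is that the two summands $\delbar\alpha\in A^{p,q+1}(X)$ and $\mu\alpha\in A^{p-1,q+2}(X)$ of $\bar\delta\alpha$ sit in distinct, hence orthogonal, bi-degrees, so $\bar\delta\alpha=0$ forces $\delbar\alpha=0$ and $\mu\alpha=0$ separately. Dually, $\bar\delta^*\alpha = \delbar^*\alpha + \mu^*\alpha$ decomposes orthogonally into $A^{p,q-1}(X)\oplus A^{p+1,q-2}(X)$, so $\bar\delta^*\alpha=0$ forces $\delbar^*\alpha=0$ and $\mu^*\alpha=0$. Combined with the first paragraph, $\alpha\in \mathcal{H}^{p,q}_{\bar\delta}(X)$ iff $\delbar\alpha=\delbar^*\alpha=\mu\alpha=\mu^*\alpha=0$, i.e.\ iff $\alpha\in\mathcal{H}^{p,q}_{\delbar}(X)\cap\mathcal{H}^{p,q}_{\mu}(X)$. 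The $\delta$-case is identical after swapping $(\delbar,\mu)\leftrightarrow(\del,\bar\mu)$.

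The one point that requires care is the direct-sum interpretation of the three-term decomposition. Because $\bar\delta^2 = \delbar^2-\del^2$ is generally nonzero when $J$ is non-integrable, the pairing $\langle\bar\delta\beta,\bar\delta^*\gamma\rangle = \langle\bar\delta^2\beta,\gamma\rangle$ need not vanish, so unlike the integrable setting the subspaces $\bar\delta A^{k-1}(X)$ and $\bar\delta^*A^{k+1}(X)$ are not mutually orthogonal. The statement is thus to be read as the orthogonal splitting $A^k(X) = \mathcal{H}^k_{\bar\delta}(X)\oplus\bigl(\bar\delta A^{k-1}(X) + \bar\delta^*A^{k+1}(X)\bigr)$, with $\mathcal{H}^k_{\bar\delta}(X)$ orthogonal to the second factor, and the $\oplus$ between the last two terms understood in this looser sense; this is the only place where the non-closedness of $\bar\delta$ surfaces, and everything else is a routine application of elliptic theory and bi-degree bookkeeping.
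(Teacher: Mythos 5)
Your proof is correct and follows essentially the same route as the paper: the decompositions come from classical elliptic theory applied to $\Delta_{\bar\delta}$ and $\Delta_{\delta}$ (Proposition \ref{prop:ellipticity-deltabar}), the identification $\mathcal{H}^k_{\bar\delta}(X)=\Ker\bar\delta\cap\Ker\bar\delta^*$ is the same integration-by-parts fact the paper uses implicitly, and the bidegree characterisation is exactly the paper's ``distinct bidegrees'' argument; your caveat that $\bar\delta A^{k-1}(X)$ and $\bar\delta^*A^{k+1}(X)$ need not be orthogonal because $\bar\delta^2=\delbar^2-\del^2\neq 0$ is well taken and consistent with the paper's own later remark for the Bott--Chern decomposition. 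One cosmetic slip: for $\alpha\in A^{p,q}(X)$ one has $\mu\alpha\in A^{p+2,q-1}(X)$ and $\mu^*\alpha\in A^{p-2,q+1}(X)$ (not $A^{p-1,q+2}(X)$ and $A^{p+1,q-2}(X)$), but this does not affect your argument, which only uses that the components lie in distinct bidegrees.
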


\begin{proof}
The Hodge decompositions follow form the classical theory of elliptic operators.
Notice that a $k$-form $\beta$ is $\bar\delta$-harmonic
if and only if 
$$
\left\lbrace
\begin{array}{lcl}
\bar\delta\beta & =& 0\\
\bar\delta^*\beta &=& 0
\end{array}
\right.
\qquad\iff\qquad
\left\lbrace
\begin{array}{lcl}
\delbar\beta+\mu\beta & =& 0\\
\delbar^*\beta+\mu^*\beta &=& 0
\end{array}
\right.\,.
$$
Hence let $\alpha\in A^{p,q}(X)$, then $\alpha\in\Ker\Delta_{\bar\delta}$
if and only if 
$\delbar\alpha=0$, $\delbar^*\alpha=0$, $\mu\alpha=0$, $\mu^*\alpha=0$ concluding the proof.\\
\end{proof}

\begin{rem}
Since the operator $\Delta_{\bar\delta}$ is elliptic the associated space of harmonic forms $\mathcal{H}^{\bullet}_{\bar\delta}(X)$ is finite-dimensional on a compact almost-Hermitian manifold. In particular, we denote with $h^{\bullet}_{\bar\delta}(X)$ its dimension. The same applies for the operator $\delta$.
\end{rem}

\begin{prop}\label{prop:delta-delbar-delta-deltabar}
Let $(X,J,g)$ be a compact almost-Hermitian manifold, then
$$
\Delta_{\bar\delta}=\Delta_{\delbar}+\Delta_{\mu}+
[\delbar,\mu^*]+[\mu,\delbar^*]
$$
and
$$
\Delta_{\delta}=\Delta_{\del}+\Delta_{\bar\mu}+
[\del,\bar\mu^*]+[\bar\mu,\del^*]\,.
$$
In particular, $\mathcal{H}^{\bullet}_{\delbar}(X)
\cap\mathcal{H}^{\bullet}_{\mu}(X)\subseteq\mathcal{H}^{\bullet}_{\bar\delta}(X)$.
\end{prop}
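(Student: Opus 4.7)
The plan is purely algebraic. To obtain the first identity I would substitute $\bar\delta = \delbar+\mu$ and $\bar\delta^{*}=\delbar^{*}+\mu^{*}$ into $\Delta_{\bar\delta}=\bar\delta\bar\delta^{*}+\bar\delta^{*}\bar\delta$ and expand. The resulting eight products split naturally: four of them are ``pure'' and reassemble as
$$
\delbar\delbar^{*}+\delbar^{*}\delbar = \Delta_{\delbar}
\qquad\text{and}\qquad
\mu\mu^{*}+\mu^{*}\mu = \Delta_{\mu},
$$
while the remaining four are ``mixed'' and pair up into the graded commutators $\delbar\mu^{*}+\mu^{*}\delbar=[\delbar,\mu^{*}]$ and $\mu\delbar^{*}+\delbar^{*}\mu=[\mu,\delbar^{*}]$. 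Summing the two expansions gives exactly the stated decomposition. The companion identity for $\Delta_{\delta}$ is obtained by the same computation with $\delta=\del+\bar\mu$ and $\delta^{*}=\del^{*}+\bar\mu^{*}$; the regrouping produces $\Delta_{\del}+\Delta_{\bar\mu}+[\del,\bar\mu^{*}]+[\bar\mu,\del^{*}]$.

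For the inclusion $\mathcal{H}^{\bullet}_{\delbar}(X)\cap\mathcal{H}^{\bullet}_{\mu}(X)\subseteq\mathcal{H}^{\bullet}_{\bar\delta}(X)$, the cleanest route is to use the compactness of $X$: the standard integration-by-parts identity
$$
\langle \Delta_{\delbar}\alpha,\alpha\rangle \;=\; \|\delbar\alpha\|^{2}+\|\delbar^{*}\alpha\|^{2}
$$
shows that $\alpha\in\mathcal{H}^{\bullet}_{\delbar}(X)$ is equivalent to $\delbar\alpha=\delbar^{*}\alpha=0$, and the same reasoning applied to $\Delta_{\mu}$ gives $\mu\alpha=\mu^{*}\alpha=0$. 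Hence if $\alpha$ lies in both harmonic spaces, then $\bar\delta\alpha=\delbar\alpha+\mu\alpha=0$ and $\bar\delta^{*}\alpha=\delbar^{*}\alpha+\mu^{*}\alpha=0$, so $\alpha\in\mathcal{H}^{\bullet}_{\bar\delta}(X)$. As a sanity check, applying the identity just derived to such an $\alpha$ annihilates each of the four summands separately.

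I do not foresee any substantive obstacle: the whole argument is a routine rearrangement of eight bilinear products together with a classical compactness argument. The only point that warrants attention is keeping the signs of the graded commutators straight, since all four of $\delbar,\delbar^{*},\mu,\mu^{*}$ are odd and their cross terms reassemble as anticommutators rather than ordinary commutators.
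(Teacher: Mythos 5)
Your argument is correct and coincides with the paper's own proof: the same expansion of $\bar\delta\bar\delta^*+\bar\delta^*\bar\delta$ into eight terms regrouped as $\Delta_{\delbar}+\Delta_{\mu}+[\delbar,\mu^*]+[\mu,\delbar^*]$ (the paper gets the $\delta$-identity by conjugation, which is equivalent to your repeated computation), and the inclusion of harmonic spaces via the compactness characterization $\Delta_{\delbar}\alpha=0\iff\delbar\alpha=\delbar^*\alpha=0$ is exactly the intended "in particular" step. No issues.
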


\begin{proof}
We prove only the first equality since the second one can be easily obtained by conjugation.
We have
$$
\begin{aligned}
\Delta_{\bar\delta}=&\, (\delbar+\mu)(\delbar^*+\mu^*)+(\delbar^*+\mu^*)(\delbar+\mu)\\
=&\, \delbar\delbar^*+\delbar\mu^*+\mu\delbar^*+\mu\mu^*+
\delbar^*\delbar+\delbar^*\mu+\mu^*\delbar+\mu^*\mu\\
=&\, \Delta_{\delbar}+\Delta_{\mu}+
[\delbar,\mu^*]+[\mu,\delbar^*]\,.
\end{aligned}
$$
\end{proof}

\begin{rem}
Notice that in \cite{cirici-wilson-2} the authors consider on $2n$-dimensional compact almost-Hermitian manifolds the spaces
of harmonic forms
$\mathcal{H}^{\bullet,\bullet}_{\delbar}\cap\mathcal{H}^{\bullet,\bullet}_{\mu}$.
By Theorem \ref{thm:deltabar-harmonic} we know that
on bi-graded forms we are just reinterpreting these spaces since
$\mathcal{H}^{\bullet,\bullet}_{\delbar}(X)
\cap\mathcal{H}^{\bullet,\bullet}_{\mu}(X)=\mathcal{H}^{\bullet,\bullet}_{\bar\delta}(X)$. Hence we refer to
\cite{cirici-wilson-2} for the properties and several results concerning these spaces.
But in general,
we just proved that on total degrees we have only the inclusion $\mathcal{H}^{\bullet}_{\delbar}(X)
\cap\mathcal{H}^{\bullet}_{\mu}(X)\subseteq\mathcal{H}^{\bullet}_{\bar\delta}(X)$. In particular in Example \ref{example-1} we show that this inclusion can be strict.
\end{rem}

\begin{rem}\label{rem:delbar-hodge-duality}
Let $(X,J,g)$ be a compact almost-Hermitian manifold of real dimension $2n$, then the Hodge-$*$-operator induces duality isomorphisms for every $k$
$$
*:\mathcal{H}^k_{\bar\delta}(X)\to \mathcal{H}^{2n-k}_{\bar\delta}(X)\,,\quad
*:\mathcal{H}^k_{\delta}(X)\to \mathcal{H}^{2n-k}_{\delta}(X)\,.
$$
In particular, for every $p,q$
$$
*:\mathcal{H}^{p,q}_{\bar\delta}(X)\to \mathcal{H}^{n-p,n-q}_{\bar\delta}(X)\,,\quad
*:\mathcal{H}^{p,q}_{\delta}(X)\to \mathcal{H}^{n-p,n-q}_{\delta}(X)\,.
$$
This follows easily from the relations $*\Delta_{\bar\delta}=\Delta_{\bar\delta}*$
and $*\Delta_{\delta}=\Delta_{\delta}*$.\\
In particular, we have the usual symmetries for the Hodge diamonds, namely for every $k$
$$
h^k_{\bar\delta}(X)=h^{2n-k}_{\bar\delta}(X)\,,\quad
h^k_{\delta}(X)=h^{2n-k}_{\delta}(X)
$$
and for every $p,q$
$$
h^{p,q}_{\bar\delta}(X)=h^{n-p,n-q}_{\bar\delta}(X)\,,\quad
h^{p,q}_{\delta}(X)=h^{n-p,n-q}_{\delta}(X)\,.
$$
\end{rem}

\begin{prop}
Let $(X,J,g)$ be an almost-Hermitian manifold, then the operators
$\Delta_{BC(\delta,\bar\delta)}$ and $\Delta_{A(\delta,\bar\delta)}$ are elliptic differential operators of the fourth order.
\end{prop}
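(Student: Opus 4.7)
The plan is a principal-symbol reduction to the classical integrable case treated by Schweitzer. The crucial observation is that $\mu$ and $\bar\mu$ are zero-order operators, being $\C$-linear over $C^\infty(X)$; hence their formal adjoints $\mu^*, \bar\mu^*$ are also of order zero. Consequently, the principal (first-order) symbols satisfy
$$\sigma_1(\delta) = \sigma_1(\del), \qquad \sigma_1(\bar\delta) = \sigma_1(\delbar),$$
and likewise $\sigma_1(\delta^*)=\sigma_1(\del^*)$, $\sigma_1(\bar\delta^*)=\sigma_1(\delbar^*)$.

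First I would check the order of each summand. In $\Delta_{BC(\delta,\bar\delta)}$ the four summands $(\delta\bar\delta)(\delta\bar\delta)^{*}$, $(\delta\bar\delta)^{*}(\delta\bar\delta)$, $(\bar\delta^{*}\delta)(\bar\delta^{*}\delta)^{*}$, $(\bar\delta^{*}\delta)^{*}(\bar\delta^{*}\delta)$ are at most of order four, while $\bar\delta^{*}\bar\delta + \delta^{*}\delta$ is of order two and thus affects only lower-order parts of the symbol. Replacing each occurrence of $\delta,\bar\delta$ (and their adjoints) by $\del,\delbar$ (and theirs) amounts to a zero-order perturbation of the operator, and therefore leaves the fourth-order principal symbol unchanged. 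Hence
$$\sigma_4\bigl(\Delta_{BC(\delta,\bar\delta)}\bigr) = \sigma_4(\Delta_{BC}),$$
where $\Delta_{BC}$ is the classical Bott-Chern Laplacian built from $\del,\delbar$. By \cite{schweitzer} this symbol is injective for every nonzero cotangent vector, so $\Delta_{BC(\delta,\bar\delta)}$ is elliptic of order four.

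The same argument applies verbatim to $\Delta_{A(\delta,\bar\delta)}$: the summands $\delta\delta^{*} + \bar\delta\bar\delta^{*}$ are of order two, while the remaining four are of order four and have the same principal symbol as the classical Aeppli Laplacian, whose ellipticity is also established in \cite{schweitzer}. The only potential obstacle is the bookkeeping of the order of each summand and the appeal to Schweitzer's symbol computation, both of which are routine; no new calculation beyond observing that $\mu,\bar\mu$ are zero-order is needed.
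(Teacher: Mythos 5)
Your proposal is correct and follows essentially the same route as the paper: both arguments rest on the fact that $\mu,\bar\mu$ (hence $\mu^*,\bar\mu^*$) are order zero, so the fourth-order principal symbols of $\Delta_{BC(\delta,\bar\delta)}$ and $\Delta_{A(\delta,\bar\delta)}$ coincide with those of their integrable models built from $\del,\delbar$. The only cosmetic difference is that the paper carries out the symbol algebra explicitly, reducing it to the symbol of $\Delta_{\bar\delta}^2\simeq\Delta_{\delbar}^2$, whereas you quote Schweitzer's computation for the classical Bott--Chern and Aeppli Laplacians (and your phrase ``zero-order perturbation of the operator'' should more precisely be ``lower-order perturbation, coming from zero-order changes of the first-order factors'').
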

\begin{proof}
The calculations for the symbol of $\Delta_{BC(\delta,\bar\delta)}$ are similar to the ones for $\Delta_{\bar\delta}$ keeping only the highest order differential terms.
Denoting with $\simeq$ the equivalence of the symbol of the operators we have
$$
\Delta_{BC(\delta,\bar\delta)}\simeq
\delta\bar\delta\bar\delta^*\delta^*+
\bar\delta^*\delta^*\delta\bar\delta+
\bar\delta^*\delta\delta^*\bar\delta+
\delta^*\bar\delta\bar\delta^*\delta
\simeq\delta\delta^*\deltabar\deltabar^*+
\delta^*\delta\deltabar^*\deltabar+
\delta\delta^*\deltabar^*\deltabar+
\delta^*\delta\deltabar\deltabar^*
$$
$$
\simeq\left(\delta^*\delta+\delta\delta^*\right)
\left(\deltabar^*\deltabar+\deltabar\deltabar^*\right)=
\Delta_{\delta}\Delta_{\deltabar}\simeq\Delta_{\deltabar}^2\,.
$$
Similar considerations can be done for $\Delta_{A(\delta,\bar\delta)}$.
\end{proof}

We denote with $\mathcal{H}^k_{BC(\delta,\bar\delta)}(X):=\Ker(\Delta_{BC(\delta,\bar\delta)\vert A^k})$ the space of $\Delta_{BC(\delta,\bar\delta)}$-harmonic
$k$-forms and with $\mathcal{H}^{p,q}_{BC(\delta,\bar\delta)}(X):=\Ker(\Delta_{BC(\delta,\bar\delta){\vert A^{p,q}}})$
the space of $\Delta_{BC(\delta,\bar\delta)}$-harmonic
$(p,q)$-forms. We have the following Lemma whose proof is a direct computation.
\begin{lemma}\label{lemma:equivalence-bc-harmonic}
Let $(X,J,g)$ be a compact almost-Hermitian manifold. Then,
a differential form $\alpha\in\mathcal{H}^k_{BC(\delta,\bar\delta)}(X)$ if and only if
$$
\delta\alpha=0\,,\quad \bar\delta\alpha=0\,,\quad
(\delta\bar\delta)^*\alpha=0\,.
$$
\end{lemma}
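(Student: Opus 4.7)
The plan is to follow the classical positive-operator argument used for the Bott-Chern Laplacian on complex manifolds (cf.\ Schweitzer), which adapts to the pair $(\delta,\bar\delta)$ without any new ingredient. The decisive observation is that $\Delta_{BC(\delta,\bar\delta)}$ was deliberately written as a sum of six non-negative operators, so that the kernel characterization follows by pairing with $\alpha$.

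For the easy direction $(\Leftarrow)$, assume $\delta\alpha=0$, $\bar\delta\alpha=0$, and $(\delta\bar\delta)^*\alpha=0$. Then $\delta\bar\delta\alpha=0$ is immediate, and expanding $(\bar\delta^*\delta)^*=\delta^*\bar\delta$ shows that $(\bar\delta^*\delta)^*\alpha=0$; meanwhile $\bar\delta^*\delta\alpha=0$. Inserting these into the six summands of $\Delta_{BC(\delta,\bar\delta)}$ I see that each summand annihilates $\alpha$, hence $\alpha\in\mathcal{H}^k_{BC(\delta,\bar\delta)}(X)$.

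For the non-trivial direction $(\Rightarrow)$, assume $\Delta_{BC(\delta,\bar\delta)}\alpha=0$. Since $X$ is compact, $\delta^*$ and $\bar\delta^*$ are genuine formal $L^2$-adjoints of $\delta$ and $\bar\delta$. The four summands of the form $PP^*+P^*P$ (with $P=\delta\bar\delta$ and $P=\bar\delta^*\delta$) together with $\bar\delta^*\bar\delta$ and $\delta^*\delta$ are manifestly positive semi-definite, so pairing with $\alpha$ yields
$$
\langle\Delta_{BC(\delta,\bar\delta)}\alpha,\alpha\rangle = \|\delta\bar\delta\alpha\|^2 + \|(\delta\bar\delta)^*\alpha\|^2 + \|\bar\delta^*\delta\alpha\|^2 + \|(\bar\delta^*\delta)^*\alpha\|^2 + \|\bar\delta\alpha\|^2 + \|\delta\alpha\|^2 = 0.
$$
Each term vanishes individually, and picking out the last two together with the second gives exactly $\delta\alpha=0$, $\bar\delta\alpha=0$, and $(\delta\bar\delta)^*\alpha=0$.

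There is essentially no obstacle to overcome: the point is that although $\delta^2\neq 0$ and $\bar\delta^2\neq 0$ in the non-integrable case (so one cannot hope to shrink the defining expression of $\Delta_{BC(\delta,\bar\delta)}$ in the manner of the K\"ahler Laplacian), the operator was constructed as an explicit sum of positive pieces precisely so that one reads off the three stated conditions from the $L^2$-pairing. The only mild verification is that these three conditions conversely force the three auxiliary vanishings $\delta\bar\delta\alpha=0$, $\bar\delta^*\delta\alpha=0$ and $(\bar\delta^*\delta)^*\alpha=0$ appearing in the expansion, which is the content of the $(\Leftarrow)$ direction above.
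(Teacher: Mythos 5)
Your proof is correct and is exactly the "direct computation" the paper alludes to: pair $\Delta_{BC(\delta,\bar\delta)}\alpha$ with $\alpha$ on the compact manifold to get a sum of six squared norms, read off the three stated vanishings from the kernel, and conversely note that those three conditions force the remaining summands ($\delta\bar\delta\alpha$, $\bar\delta^*\delta\alpha$, $\delta^*\bar\delta\alpha$) to vanish as well. No gaps; this matches the paper's intended argument.
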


 We get the following

\begin{prop}
Let $(X,J,g)$ be a compact almost-Hermitian manifold, then the following Hodge decomposition holds
$$
A^k(X)=\mathcal{H}^k_{BC(\delta,\bar\delta)}(X)
\stackrel{\perp}{\oplus}
\left(\delta\bar\delta A^{k-2}(X)\oplus
\left(
\bar\delta^* A^{k+1}(X)+\delta^* A^{k+1}(X)\right)\right)\,.
$$
Moreover, a $(p,q)$-form $\alpha\in\mathcal{H}^{p,q}_{BC(\delta,\bar\delta)}(X)$ if and only if
$$
\left\lbrace
\begin{array}{lcl}
\del\alpha & =& 0\\
\delbar\alpha &=& 0\\
\mu\alpha &=& 0\\
\bar\mu\alpha &=& 0\\
(\del\delbar+\bar\mu\mu)(*\alpha) &=& 0\\
\del\mu(*\alpha) &=& 0\\
\bar\mu\delbar(*\alpha) &=& 0
\end{array}
\right.\,.
$$
\end{prop}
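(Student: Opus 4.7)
The plan is to establish both halves of the proposition from the ellipticity of $\Delta_{BC(\delta,\bar\delta)}$ (shown just above) together with the harmonicity characterization in Lemma \ref{lemma:equivalence-bc-harmonic}, with the bigraded refinement obtained by decomposing each of those three harmonicity conditions along the $J$-induced bigrading.

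\textbf{Hodge decomposition.} Since $\Delta_{BC(\delta,\bar\delta)}$ is elliptic on the compact manifold $X$, standard elliptic theory produces an $L^2$-orthogonal decomposition $A^k(X)=\mathcal{H}^k_{BC(\delta,\bar\delta)}(X)\oplus\mathrm{Im}(\Delta_{BC(\delta,\bar\delta)\vert A^k})$. Reading off each of the six terms in the definition of $\Delta_{BC(\delta,\bar\delta)}$, the image is contained in $\delta\bar\delta A^{k-2}(X)+\bar\delta^*A^{k+1}(X)+\delta^*A^{k+1}(X)$. Conversely, each of these three subspaces is $L^2$-orthogonal to $\mathcal{H}^k_{BC(\delta,\bar\delta)}(X)$: by Lemma \ref{lemma:equivalence-bc-harmonic} a harmonic $\beta$ satisfies $(\delta\bar\delta)^*\beta=\delta\beta=\bar\delta\beta=0$, so $\langle \delta\bar\delta\phi,\beta\rangle=\langle\phi,(\delta\bar\delta)^*\beta\rangle=0$, $\langle\delta^*\phi,\beta\rangle=\langle\phi,\delta\beta\rangle=0$, and likewise for $\bar\delta^*$. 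This reproduces the scheme used for Theorem \ref{thm:BCA-harmonic}.

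\textbf{Bigraded characterization.} By Lemma \ref{lemma:equivalence-bc-harmonic} it suffices to show that, for $\alpha\in A^{p,q}(X)$, the three conditions $\delta\alpha=0$, $\bar\delta\alpha=0$, $(\delta\bar\delta)^*\alpha=0$ are equivalent to the listed seven equations. Writing $\delta\alpha=\del\alpha+\bar\mu\alpha$, the two summands sit in the distinct bidegrees $A^{p+1,q}(X)$ and $A^{p-1,q+2}(X)$, so $\delta\alpha=0$ forces $\del\alpha=0$ and $\bar\mu\alpha=0$; analogously $\bar\delta\alpha=\delbar\alpha+\mu\alpha$ yields $\delbar\alpha=0$ and $\mu\alpha=0$. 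This produces the first four equations. For the third harmonicity condition we pass to $*\alpha$. Using $D^*=-*D*$ for $D=\delta,\bar\delta$ gives $(\delta\bar\delta)^*=\bar\delta^*\delta^*=*\,\bar\delta\,(*\,*)\,\delta\,*$, and since $*\,*$ acts as a nonzero scalar on each space of pure-degree forms, this is a nonzero multiple of $*\bar\delta\delta*=\mp *\delta\bar\delta*$ via the anticommutation $\delta\bar\delta+\bar\delta\delta=0$. Since $*$ is an isomorphism, $(\delta\bar\delta)^*\alpha=0$ iff $\delta\bar\delta(*\alpha)=0$. Now $*\alpha$ has pure bidegree, and expanding $\delta\bar\delta=\del\delbar+\del\mu+\bar\mu\delbar+\bar\mu\mu$ the four summands of $\delta\bar\delta(*\alpha)$ fall into three distinct bidegree shifts relative to $*\alpha$: $\del\delbar(*\alpha)$ and $\bar\mu\mu(*\alpha)$ both contribute $(+1,+1)$, while $\del\mu(*\alpha)$ and $\bar\mu\delbar(*\alpha)$ occupy $(+3,-1)$ and $(-1,+3)$. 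Vanishing bidegree by bidegree therefore reads exactly $(\del\delbar+\bar\mu\mu)(*\alpha)=0$, $\del\mu(*\alpha)=0$ and $\bar\mu\delbar(*\alpha)=0$, yielding the last three conditions.

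\textbf{Main obstacle.} The one delicate step is the equivalence $(\delta\bar\delta)^*\alpha=0\iff\delta\bar\delta(*\alpha)=0$: one must verify that $*\,*$ acts as a fixed nonzero scalar on each space of forms of pure degree so that it can be cancelled cleanly, and track the sign produced by the anticommutation of $\delta$ and $\bar\delta$. Since only vanishing statements enter the final system, neither factor influences the conclusion, but the computation has to be stated carefully so that the scalar drops out and the bigraded splitting matches the displayed equations.
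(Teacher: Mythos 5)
Your proposal is correct and takes essentially the same route as the paper: the orthogonal decomposition comes from the ellipticity of $\Delta_{BC(\delta,\bar\delta)}$ together with the same adjointness computation for orthogonality to $\mathcal{H}^k_{BC(\delta,\bar\delta)}(X)$, and the bigraded characterization comes from Lemma \ref{lemma:equivalence-bc-harmonic} by splitting $\delta\alpha$, $\bar\delta\alpha$ and $\delta\bar\delta(*\alpha)$ into their bidegree components. The only difference is that you make explicit the step $(\delta\bar\delta)^*\alpha=0\iff\delta\bar\delta(*\alpha)=0$ via $\delta^*=-*\delta*$, $\bar\delta^*=-*\bar\delta*$ and $**=\pm 1$, which the paper uses without comment.
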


\begin{proof}
The Hodge decomposition follows from the ellipticity of $\Delta_{BC(\delta,\bar\delta)}$.\\
Now let $\alpha\in A^k(X)$, then in view of Lemma \ref{lemma:equivalence-bc-harmonic}
$\alpha\in\Ker\Delta_{BC(\delta,\bar\delta)}$
if and only if
$$
\left\lbrace
\begin{array}{lcl}
\delta\alpha & =& 0\\
\bar\delta\alpha &=& 0\\
\delta\bar\delta*\alpha &=& 0
\end{array}
\right.\,.
$$
if and only if 
$$
\left\lbrace
\begin{array}{lcl}
(\del+\bar\mu)\alpha & =& 0\\
(\delbar+\mu)\alpha &=& 0\\
(\del\delbar+\del\mu+\bar\mu\delbar+\bar\mu\mu)(*\alpha)&=& 0\\
\end{array}
\right.\,.
$$
In particular, if $\alpha$ is a $(p,q)$-form we obtain the thesis.\\
Finally, given $\alpha\in \mathcal{H}^k_{BC(\delta,\bar\delta)}(X)$, $\beta\in A^{k-2}(X)$, $\gamma\in A^{k+1}(X)$ and $\eta\in A^{k+1}(X)$
we have
$$
(\alpha,\delta\bar\delta\beta+\deltabar^*\gamma+\delta^*\eta)=
((\delta\bar\delta)^*\alpha,\beta)+(\deltabar\alpha,\gamma)
+(\delta\alpha,\eta)=0\,.
$$
 \end{proof}
 
\begin{rem}
Notice that the spaces
$\delta\bar\delta A^{k-2}(X)$ and
$\bar\delta^* A^{k+1}(X)+\delta^* A^{k+1}(X)$ are orthogonal if and only if $\delta^2=0$.
\end{rem}
 
Similarly, if we denote with $\mathcal{H}^k_{A(\delta,\bar\delta)}(X):=\Ker(\Delta_{A(\delta,\bar\delta){\vert A^k}})$ the space of 
$\Delta_{A(\delta,\bar\delta)}$-harmonic
$k$-forms and with $\mathcal{H}^{p,q}_{A(\delta,\bar\delta)}(X):=\Ker(\Delta_{A(\delta,\bar\delta){\vert A^{p,q}}})$
the space of $\Delta_{A(\delta,\bar\delta)}$-harmonic
$(p,q)$-forms we get the following

\begin{prop}
Let $(X,J,g)$ be a compact almost-Hermitian manifold, then the following Hodge decomposition holds
$$
A^k(X)=\mathcal{H}^k_{A(\delta,\bar\delta)}(X)\stackrel{\perp}{\oplus}
\left(
\left(\delta A^{k-1}(X)+\deltabar A^{k-1}(X)\right)
\oplus \left(\delta\deltabar\right)^* A^{k+2}(X)\right)\,.
$$
Moreover, a $(p,q)$-form $\alpha\in\mathcal{H}^{p,q}_{A(\delta,\bar\delta)}(X)$ if and only if
$$
\left\lbrace
\begin{array}{lcl}
\del^*\alpha & =& 0\\
\delbar^*\alpha &=& 0\\
\mu^*\alpha &=& 0\\
\bar\mu^*\alpha &=& 0\\
(\del\delbar+\bar\mu\mu)\alpha &=& 0\\
\del\mu\alpha &=& 0\\
\bar\mu\delbar\alpha &=& 0
\end{array}
\right.\,.
$$
\end{prop}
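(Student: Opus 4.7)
The plan is to mirror the proof of the Bott-Chern counterpart just established. The argument has three ingredients: the Hodge decomposition from ellipticity, an $L^2$-characterization of Aeppli-harmonic forms analogous to Lemma \ref{lemma:equivalence-bc-harmonic}, and a bidegree splitting of the resulting scalar conditions.

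Since the preceding proposition establishes that $\Delta_{A(\delta,\bar\delta)}$ is a self-adjoint elliptic operator of order four, the classical theory yields the orthogonal decomposition $A^k(X)=\mathcal{H}^k_{A(\delta,\bar\delta)}(X)\oplus\Imm\,\Delta_{A(\delta,\bar\delta)}$. I would then identify the image by expanding the six summands of $\Delta_{A(\delta,\bar\delta)}$: the terms $\delta\delta^*$, $(\bar\delta\delta^*)^*(\bar\delta\delta^*)=\delta\bar\delta^*\bar\delta\delta^*$ and $(\delta\bar\delta)(\delta\bar\delta)^*=\delta\bar\delta\bar\delta^*\delta^*$ land in $\delta A^{k-1}(X)$; the terms $\bar\delta\bar\delta^*$ and $(\bar\delta\delta^*)(\bar\delta\delta^*)^*=\bar\delta\delta^*\delta\bar\delta^*$ land in $\bar\delta A^{k-1}(X)$; and $(\delta\bar\delta)^*(\delta\bar\delta)$ lands in $(\delta\bar\delta)^*A^{k+2}(X)$, establishing the inclusion $\Imm\,\Delta_{A(\delta,\bar\delta)}\subseteq \delta A^{k-1}(X)+\bar\delta A^{k-1}(X)+(\delta\bar\delta)^*A^{k+2}(X)$.

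Next I would prove that a form $\alpha$ lies in $\mathcal{H}^k_{A(\delta,\bar\delta)}(X)$ if and only if $\delta^*\alpha=0$, $\bar\delta^*\alpha=0$ and $\delta\bar\delta\alpha=0$. For the forward implication, pairing $\Delta_{A(\delta,\bar\delta)}\alpha$ with $\alpha$ in $L^2$ rewrites the six summands as the sum $\|\delta^*\alpha\|^2+\|\bar\delta^*\alpha\|^2+\|\delta\bar\delta\alpha\|^2+\|(\delta\bar\delta)^*\alpha\|^2+\|\bar\delta\delta^*\alpha\|^2+\|\delta\bar\delta^*\alpha\|^2$, each summand of which must vanish. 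Conversely, using the adjoint identities $(\delta\bar\delta)^*=\bar\delta^*\delta^*$ and $(\bar\delta\delta^*)^*=\delta\bar\delta^*$, the three displayed conditions kill all six summands of $\Delta_{A(\delta,\bar\delta)}\alpha$ directly. The orthogonality of $\mathcal{H}^k_{A(\delta,\bar\delta)}(X)$ with the complementary space then follows from adjunction, via the identity
$$(\alpha,\delta\beta_1+\bar\delta\beta_2+(\delta\bar\delta)^*\gamma)=(\delta^*\alpha,\beta_1)+(\bar\delta^*\alpha,\beta_2)+(\delta\bar\delta\alpha,\gamma)=0,$$
paralleling the last display of the Bott-Chern proof.

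For the $(p,q)$-characterization, one splits the three scalar conditions by bidegree on $\alpha\in A^{p,q}(X)$. Since $\del^*\alpha\in A^{p-1,q}(X)$ and $\bar\mu^*\alpha\in A^{p+1,q-2}(X)$ sit in distinct bidegrees, $\delta^*\alpha=0$ becomes $\del^*\alpha=\bar\mu^*\alpha=0$; likewise $\bar\delta^*\alpha=0$ yields $\delbar^*\alpha=\mu^*\alpha=0$. Expanding $\delta\bar\delta\alpha=\del\delbar\alpha+\del\mu\alpha+\bar\mu\delbar\alpha+\bar\mu\mu\alpha$, one notes that $\del\delbar\alpha$ and $\bar\mu\mu\alpha$ both land in $A^{p+1,q+1}(X)$, while $\del\mu\alpha\in A^{p+3,q-1}(X)$ and $\bar\mu\delbar\alpha\in A^{p-1,q+3}(X)$ are isolated in their own bidegrees, so $\delta\bar\delta\alpha=0$ splits into the three equations stated. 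The main (modest) obstacle is precisely this bidegree bookkeeping: one must correctly match the four mixed-operator terms to the three target bidegrees and verify that only the $(p+1,q+1)$-component contains two non-trivially coupled terms.
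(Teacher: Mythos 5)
Your proposal is correct and follows exactly the route the paper intends: the paper states this proposition without proof as the ``similar'' companion of the Bott--Chern case, and your argument mirrors that proof --- ellipticity gives $A^k(X)=\mathcal{H}^k_{A(\delta,\bar\delta)}(X)\oplus\mathrm{Im}\,\Delta_{A(\delta,\bar\delta)}$, the $L^2$-pairing yields the characterization $\delta^*\alpha=0$, $\bar\delta^*\alpha=0$, $\delta\bar\delta\alpha=0$ (the Aeppli analogue of Lemma \ref{lemma:equivalence-bc-harmonic}), orthogonality follows by adjunction, and the bidegree bookkeeping splitting $\delta\bar\delta\alpha=0$ into $(\del\delbar+\bar\mu\mu)\alpha=0$, $\del\mu\alpha=0$, $\bar\mu\delbar\alpha=0$ is carried out correctly. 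Your computations (adjoint identities, bidegree targets, and the assignment of the six summands of $\Delta_{A(\delta,\bar\delta)}$ to the three pieces of the decomposition) all check out.
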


 \begin{rem}
Since the operators $\Delta_{BC(\delta,\bar\delta)}$ and $\Delta_{A(\delta,\bar\delta)}$ are elliptic, the associated spaces of harmonic forms $\mathcal{H}^{\bullet}_{BC(\delta,\bar\delta)}(X)$, $\mathcal{H}^{\bullet}_{A(\delta,\bar\delta)}(X)$ are finite-dimensional on a compact almost-Hermitian manifold. In particular, we denote with $h^{\bullet}_{BC(\delta,\bar\delta)}(X)$ and $h^{\bullet}_{A(\delta,\bar\delta)}(X)$ their dimensions.
\end{rem}

\begin{rem}
Let $(X,J,g)$ be an almost-Hermitian manifold. Then, by definition,
conjugation induces the following isomorphisms
$$
\overline{\mathcal{H}^{\bullet}_{\bar\delta}(X)}=\mathcal{H}^\bullet_{\delta}(X)\,,
\qquad
\overline{\mathcal{H}^{\bullet}_{BC(\delta,\bar\delta)}(X)}=\mathcal{H}^\bullet_{BC(\delta,\bar\delta)}(X)\,.
$$
In particular, for any $p,\,q$
$$
\overline{\mathcal{H}^{p,q}_{\bar\delta}(X)}=\mathcal{H}^{q,p}_{\delta}(X)\,,
\qquad
\overline{\mathcal{H}^{p,q}_{BC(\delta,\bar\delta)}(X)}=\mathcal{H}^{q,p}_{BC(\delta,\bar\delta)}(X)\,.
$$
Therefore, we have the following dimensional equalities
for every $k$
$$
h^k_{\bar\delta}(X)=h^{k}_{\delta}(X)
$$
and for every $p,q$
$$
h^{p,q}_{\bar\delta}(X)=h^{q,p}_{\delta}(X)\,,\quad
h^{p,q}_{BC(\delta,\bar\delta)}(X)=h^{q,p}_{BC(\delta,\bar\delta)}(X)\,.
$$
\end{rem}

\begin{rem}\label{rem:BC-hodge-duality}
Let $(X,J,g)$ be a compact almost-Hermitian manifold of real dimension $2n$, then the Hodge-$*$-operator induces duality isomorphisms for every $k$
$$
*:\mathcal{H}^k_{BC(\delta,\bar\delta)}(X)\to \mathcal{H}^{2n-k}_{A(\delta,\bar\delta)}(X)\,.
$$
In particular, for every $p,q$
$$
*:\mathcal{H}^{p,q}_{BC(\delta,\bar\delta)}(X)\to \mathcal{H}^{n-p,n-q}_{A(\delta,\bar\delta)}(X)\,.
$$
Therefore we have the usual symmetries for the Hodge diamonds, namely for every $k$
$$
h^k_{BC(\delta,\bar\delta)}(X)=h^{2n-k}_{A(\delta,\bar\delta)}(X)
$$
and for every $p,q$
$$
h^{p,q}_{BC(\delta,\bar\delta)}(X)=h^{n-p,n-q}_{A(\delta,\bar\delta)}(X)\,.
$$
\end{rem}

\section{Harmonic forms on almost-K\"ahler manifolds}\label{section:harmonic-forms-almost-kahler}

Let $(X,J,g,\omega)$ be a compact almost-K\"ahler manifold.
With the usual notations, we have the following almost-K\"ahler identities
(cf. \cite{debartolomeis-tomassini}, \cite{cirici-wilson-2})

\begin{lemma}\label{lemma:almost-kahler-identities}
Let $(X,J,g,\omega)$ be an almost-K\"ahler manifold then
\begin{itemize}
\item[$\bullet$] $[\delta,\Lambda]=i\,\bar\delta^*$, 
$[\del,\Lambda]=i\,\delbar^*$ and $[\bar\mu,\Lambda]=i\,\mu^*$
\item[$\bullet$] $[\bar\delta,\Lambda]=-i\,\delta^*$, 
$[\delbar,\Lambda]=-i\,\del^*$ and $[\mu,\Lambda]=-i\,\bar\mu^*$.
\end{itemize} 
\end{lemma}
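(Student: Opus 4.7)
The plan is to deduce the six component identities from a single global identity $[d,\Lambda] = -(d^c)^*$, using bidegree bookkeeping to separate the contributions. The almost-K\"ahler condition $d\omega=0$ enters only once, in establishing this global identity.

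First, I would prove $[d,\Lambda] = -(d^c)^*$ using the symplectic Hodge star $\star = J* = *J$. Because $\omega$ is $d$-closed, the Brylinski-type formula $[d,\Lambda]\alpha = (-1)^{k+1}\star d\star\alpha$ holds pointwise on $A^k(X)$; this is purely a consequence of $d\omega=0$ and the $\mathfrak{sl}(2)$-representation theory generated by $L$, $\Lambda$, and their commutator, and does not require integrability of $J$. On the other hand, from $d^c = J^{-1}dJ$ and the antilinearity of $*$ one computes $(d^c)^* = -*d^c* = (-1)^k \star d \star$ on $k$-forms (as already used in Section \ref{section:diff-op-sympl}). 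Combining the two displays gives the global identity.

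Next, I would expand both sides by bidegree. Writing $d=\mu+\del+\delbar+\bar\mu$ and using that $\Lambda$ has bidegree $(-1,-1)$, the four commutators $[\mu,\Lambda]$, $[\del,\Lambda]$, $[\delbar,\Lambda]$, $[\bar\mu,\Lambda]$ shift bidegree by $(+1,-2)$, $(0,-1)$, $(-1,0)$, $(-2,+1)$, respectively. On the other side, $(d^c)^*= i(\del^*+\bar\mu^*-\delbar^*-\mu^*)$, and the four adjoints $\delbar^*, \mu^*, \del^*, \bar\mu^*$ carry exactly those same bidegree shifts $(0,-1), (-2,+1), (-1,0), (+1,-2)$. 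Since the four bidegree shifts are pairwise distinct, matching components yields the four basic identities
\[
[\del,\Lambda]=i\,\delbar^*,\quad [\delbar,\Lambda]=-i\,\del^*,\quad [\mu,\Lambda]=-i\,\bar\mu^*,\quad [\bar\mu,\Lambda]=i\,\mu^*.
\]
The remaining identities for $\delta$ and $\bar\delta$ then follow by linearity from $\delta=\del+\bar\mu$ and $\bar\delta=\delbar+\mu$: for instance $[\delta,\Lambda]=[\del,\Lambda]+[\bar\mu,\Lambda]=i\delbar^*+i\mu^*=i\bar\delta^*$, and similarly $[\bar\delta,\Lambda]=-i\delta^*$.

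The main obstacle is the first step, the Brylinski identity $[d,\Lambda]=(-1)^{k+1}\star d\star$ in the almost-K\"ahler setting. Unlike in the K\"ahler case, one cannot reduce to a flat model by Darboux plus parallel transport of $J$; instead one works in a local unitary coframe $\{\theta^i\}$ in which $\omega=\tfrac{i}{2}\sum\theta^i\wedge\bar\theta^i$, expresses $L$, $\Lambda$ and $\star$ in terms of the interior and exterior multiplications by the $\theta^i$ and $\bar\theta^i$, and verifies the identity algebraically on decomposable forms, the $d\omega=0$ hypothesis being exactly what kills the would-be obstruction terms. Once this is in place, the bidegree decomposition above is automatic.
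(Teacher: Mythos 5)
Your proposal is correct and follows essentially the same route as the paper: both rest on the symplectic identity $d^\Lambda=[d,\Lambda]=-(d^c)^*$ (Brylinski, recalled at the start of Section \ref{section:diff-op-sympl}) and then separate the resulting operator identity into components determined by the decomposition $d=\mu+\del+\delbar+\bar\mu$. Your version merely makes the bidegree bookkeeping explicit for all four pieces before summing to get the $\delta$ and $\bar\delta$ identities, whereas the paper states the matching more tersely, so there is no substantive difference.
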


\begin{proof}
For the sake of completeness we recall here the proof.
We have
$$
d^\Lambda=[d,\Lambda]=[\delta+\deltabar,\Lambda]=
[\del+\bar\mu+\delbar+\mu,\Lambda]
$$
and
$$
-(d^c)^*=i(\deltabar^*-\delta^*)=i(\delbar^*+\mu^*-\del^*-\bar\mu^*).
$$
Since $\omega$ is symplectic, $d^\Lambda=-(d^c)^*$ as recalled at the beginning of Section \ref{section:diff-op-sympl};
hence $[\delta,\Lambda]=i\,\bar\delta^*$ and $[\bar\delta,\Lambda]=-i\,\delta^*$.
\end{proof}

As a consequence one has the following (see \cite[Lemma 3.6]{debartolomeis-tomassini})

\begin{prop}\label{prop:deltabar-delta-laplacians}
Let $(X,J,g,\omega)$ be an almost-K\"ahler manifold, then
$\Delta_{\bar\delta}$ and $\Delta_{\delta}$ are related by 
$$
\Delta_{\bar\delta}=\Delta_{\delta}
$$
and
$$
\Delta_d=\Delta_{\bar\delta}+\Delta_{\delta}+E_J
$$
where
$$
E_J=\delta\bar\delta^*+\bar\delta^*\delta+\bar\delta\delta^*+\delta^*\bar\delta\,.\\
$$
In particular, their spaces of harmonic forms coincide, i.e. $\mathcal{H}^{\bullet}_{\delta}(X)=\mathcal{H}^{\bullet}_{\bar\delta}(X)$\,.
\end{prop}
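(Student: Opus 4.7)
The plan is to handle the two identities separately and then deduce the statement on harmonic forms as an immediate consequence.

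For the decomposition $\Delta_d = \Delta_{\bar\delta} + \Delta_\delta + E_J$, the starting point is the identity $d = \delta + \bar\delta$ established in the first lemma of Section \ref{section:preliminaries}, which immediately gives $d^* = \delta^* + \bar\delta^*$. I would simply expand
\[
\Delta_d = (\delta+\bar\delta)(\delta^*+\bar\delta^*) + (\delta^*+\bar\delta^*)(\delta+\bar\delta)
\]
into eight terms and regroup them: the four ``diagonal'' terms $\delta\delta^* + \delta^*\delta$ and $\bar\delta\bar\delta^* + \bar\delta^*\bar\delta$ assemble to $\Delta_\delta + \Delta_{\bar\delta}$, while the four remaining cross-terms are exactly the definition of $E_J$. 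This part uses only the formal decomposition of $d$ and does not invoke the almost-K\"ahler hypothesis.

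For the equality $\Delta_{\bar\delta} = \Delta_\delta$, the key input is Lemma \ref{lemma:almost-kahler-identities}, specifically $\bar\delta^* = -i[\delta,\Lambda]$ and $\delta^* = i[\bar\delta,\Lambda]$. The plan is to substitute the first identity into $\Delta_{\bar\delta} = \bar\delta\bar\delta^* + \bar\delta^*\bar\delta$ and the second into $\Delta_\delta = \delta\delta^* + \delta^*\delta$, so that both Laplacians become third-order expressions in $\delta$, $\bar\delta$ and $\Lambda$. At this stage the anticommutation relation $\delta\bar\delta + \bar\delta\delta = 0$, proved in Section \ref{section:preliminaries}, is used to swap $\delta\bar\delta$ and $-\bar\delta\delta$ wherever necessary. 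After this rearrangement, both $\Delta_{\bar\delta}$ and $\Delta_\delta$ collapse to the common expression
\[
-i\bar\delta\delta\Lambda + i\bar\delta\Lambda\delta - i\delta\Lambda\bar\delta + i\Lambda\delta\bar\delta,
\]
which yields the desired equality.

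The statement about harmonic spaces, $\mathcal{H}^\bullet_\delta(X) = \mathcal{H}^\bullet_{\bar\delta}(X)$, is then an immediate consequence, since two operators that are equal as differential operators on $A^\bullet(X)$ necessarily have identical kernels in every degree.

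The main obstacle is purely bookkeeping: one has to be careful with the signs when commuting $\delta$ and $\bar\delta$ past each other in the third-order terms, because here the relevant relation is anticommutativity rather than commutativity. This is precisely the point at which the argument mimics the classical K\"ahler identities proof, with $\del,\delbar$ replaced by $\delta,\bar\delta$; no new analytic input is required beyond Lemma \ref{lemma:almost-kahler-identities} and the anticommutation relation.
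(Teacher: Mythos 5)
Your proposal is correct and follows exactly the route the paper intends: the first identity is the formal expansion of $d=\delta+\bar\delta$, and the equality $\Delta_{\bar\delta}=\Delta_{\delta}$ follows from the almost-K\"ahler identities $[\delta,\Lambda]=i\,\bar\delta^*$, $[\bar\delta,\Lambda]=-i\,\delta^*$ together with $\delta\bar\delta+\bar\delta\delta=0$ (crucially not using $\delta^2=0$, which fails in the non-integrable case). The paper itself delegates this computation to \cite[Lemma 3.6]{debartolomeis-tomassini}, and your argument supplies precisely that standard K\"ahler-type calculation with $\del,\delbar$ replaced by $\delta,\bar\delta$.
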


In fact, we can use this result to characterize K\"ahler manifolds among the almost-K\"ahler ones.
\begin{cor}\label{cor:kahler-equality-laplacians}
Let $(X,J,g,\omega)$ be a compact almost-K\"ahler manifold, then
$$
\Delta_d=2\Delta_{\delta}\quad\iff\quad (X,J,g,\omega) \text{ is K\"ahler.}
$$
\end{cor}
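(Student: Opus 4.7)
The plan is to rewrite $\Delta_d = 2\Delta_\delta$ in terms of the operator $E_J$ and then show that its vanishing forces integrability of $J$. By Proposition \ref{prop:deltabar-delta-laplacians}, $\Delta_d = \Delta_\delta + \Delta_{\bar\delta} + E_J$ and $\Delta_\delta = \Delta_{\bar\delta}$ on any almost-K\"ahler manifold, so $\Delta_d = 2\Delta_\delta$ is equivalent to $E_J = 0$. The ``if'' direction is then immediate: when $(X,J,g,\omega)$ is K\"ahler we have $\mu = \bar\mu = 0$, hence $\delta = \del$ and $\Delta_\delta = \Delta_\del$, so the classical K\"ahler identity $\Delta_d = 2\Delta_\del$ applies.

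For the converse, the first step is to put $E_J$ into a compact form via the almost-K\"ahler identities of Lemma \ref{lemma:almost-kahler-identities}. Using $\bar\delta^* = -i[\delta, \Lambda]$ and $\delta^* = i[\bar\delta, \Lambda]$, together with the elementary identity $\{A, [A, \Lambda]\} = [A^2, \Lambda]$ valid for any odd operator $A$ and the relation $\bar\delta^2 = -\delta^2$ from Section \ref{section:preliminaries}, a direct computation gives $E_J = -2i[\delta^2, \Lambda]$. Decomposing $\delta^2 = \del^2 - \delbar^2$ by bidegree, $E_J = 0$ is then equivalent to $[\del^2, \Lambda] = 0$ (the conjugate condition $[\delbar^2, \Lambda] = 0$ being automatic).

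Next, I would use $\del^2 = -\{\mu, \delbar\}$, the graded Jacobi identity $[\{\mu, \delbar\}, \Lambda] = \{\mu, [\delbar, \Lambda]\} + \{\delbar, [\mu, \Lambda]\}$, and the almost-K\"ahler identities $[\delbar, \Lambda] = -i\del^*$, $[\mu, \Lambda] = -i\bar\mu^*$ to rewrite the condition as the operator identity $\{\mu, \del^*\} + \{\delbar, \bar\mu^*\} = 0$. Restricting to $A^{0,1}(X)$ kills most terms for bidegree reasons (their targets lie in degenerate bidegrees such as $A^{p,-1} = 0$), and the condition reduces to $\del^*\mu + \bar\mu^*\delbar = 0$ as operators $A^{0,1}(X) \to A^{1,0}(X)$.

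The main obstacle will be extracting pointwise vanishing of $\mu$ from this first-order operator identity; my plan here is a principal-symbol argument at each $x \in X$. For real $\xi \in T^*_x X$ the identity becomes $\sigma(\del^*)(\xi)\mu_x\alpha + \bar\mu^*_x(\xi^{0,1}\wedge\alpha) = 0$ on $A^{0,1}_x$, where the first summand depends only on $\xi^{1,0}$ (via the symbol of $\del^*$) and the second only on $\xi^{0,1}$. Comparing the identity at $\xi$ with its counterpart at $-J\xi$ (which multiplies $\xi^{1,0}$ by $-i$ and $\xi^{0,1}$ by $+i$) forces each summand to vanish separately; in particular $\sigma(\del^*)(\xi)\mu_x\alpha = 0$ for every $\xi^{1,0}$, and non-degeneracy of $\sigma(\del^*)$ in its argument yields $\mu_x\alpha = 0$ for every $\alpha \in A^{0,1}_x$. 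Hence $\mu|_{A^{0,1}(X)} \equiv 0$; since $\mu$ is a derivation of the exterior algebra vanishing on $A^{1,0}(X)$ for bidegree reasons, $\mu \equiv 0$ identically. This is the vanishing of the Nijenhuis tensor, so $J$ is integrable and $(X,J,g,\omega)$ is K\"ahler.
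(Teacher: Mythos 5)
Your reduction is correct and well executed up to the operator identity: $E_J=-2i[\delta^2,\Lambda]$ does follow from $\bar\delta^*=-i[\delta,\Lambda]$, $\delta^*=i[\bar\delta,\Lambda]$, $\{A,[A,\Lambda]\}=[A^2,\Lambda]$ and $\bar\delta^2=-\delta^2$; the bidegree/conjugation argument reducing $E_J=0$ to $[\del^2,\Lambda]=0$, then via $\del^2=-\{\mu,\delbar\}$ and the graded Jacobi identity to $\{\mu,\del^*\}+\{\delbar,\bar\mu^*\}=0$, and its restriction $\del^*\mu+\bar\mu^*\delbar=0$ on $A^{0,1}(X)$, are all fine, as is the ``if'' direction. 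The genuine gap is in the final symbol step. The principal symbol of $\del^*$ at a real covector $\xi$ is, up to a constant, the \emph{adjoint} of exterior multiplication by $\xi^{1,0}$, hence it is conjugate-linear in $\xi^{1,0}$, i.e.\ linear in $\xi^{0,1}$ --- exactly like the symbol of $\delbar$. Therefore under $\xi\mapsto -J\xi$ both summands $\sigma(\del^*)(\xi)\,\mu_x$ and $\bar\mu_x^*\,(\xi^{0,1}\wedge\cdot)$ are multiplied by the \emph{same} factor $i$, and comparing the identity at $\xi$ and at $-J\xi$ yields nothing new: the whole symbol is purely conjugate-linear in $\xi^{1,0}$, so no scaling of $\xi$ can separate the two terms. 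Moreover the separation is not a formal consequence of the shape of the identity: for independent tensors $T:\Lambda^{0,1}\to\Lambda^{2,0}$, $S:\Lambda^{0,2}\to\Lambda^{1,0}$ there are nonzero solutions of $e(\xi^{1,0})^*T+S\,e(\xi^{0,1})=0$ for all real $\xi$ in which neither summand vanishes. What saves your conclusion is the extra input that $\bar\mu=\overline{\mu}$ together with the precise signs coming from the almost-K\"ahler identities: writing $\mu\bar\theta^a=\sum_{i<j}\mu^a_{ij}\theta^{ij}$ in a unitary coframe, the symbol identity reads $\mu^a_{ec}=\mu^c_{ea}$, which combined with antisymmetry in the lower indices forces $\mu^a_{ec}=-\mu^a_{ec}=0$ by the usual three-fold index swap; with the opposite sign (e.g.\ $\mu^a_{ec}=\epsilon_{aec}$ in complex dimension $3$) nonzero solutions exist, so the sign bookkeeping is essential and must be carried out. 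As written, your crucial step ``each summand vanishes separately, hence $\mu_x=0$'' is unjustified.

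For comparison, the paper's own proof sidesteps any pointwise analysis: it computes $[\Delta_d,L]=[d,[d^*,L]]=-[d,d^c]$ and, from $[L,\delta^*]=-i\bar\delta$ and $[\delta,L]=0$, that $[\Delta_\delta,L]=i[\delta,\bar\delta]=0$; hence $\Delta_d=2\Delta_\delta$ forces $dd^c+d^cd=-[\Delta_d,L]=0$, and by the identity $dd^c+d^cd=4i(\delbar^2-\del^2)$ from Section \ref{section:preliminaries} this is equivalent to integrability. Your route can be repaired by replacing the $J\xi$-comparison with the index computation above (or an equivalent algebraic argument exploiting $\bar\mu=\overline{\mu}$), but the paper's commutator-with-$L$ argument is shorter and avoids the sign-sensitive pointwise analysis altogether.
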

\begin{proof}
First of all, on any almost-K\"ahler manifold one has (cf. e.g., \cite{cirici-wilson-2})
$$
[\Delta_d,L]=[[d,d^*],L]=[d,[d^*,L]]=-[d,d^c].
$$
In view of Lemma \ref{lemma:almost-kahler-identities}, it is 
$$[\delta,\Lambda]=i\,\bar\delta^*;
$$
therefore, taking the adjoint,
$$
[L,\delta^*]=-i\deltabar.
$$
Furthermore, since $\omega$ is $d$-closed, we have 
$$[\delta,L]=0.
$$
Hence, (cf. Lemma \ref{lemma:commutator-laplacian-L})
$$
[\Delta_\delta,L]=[[\delta,\delta^*],L]=[\delta,[\delta^*,L]]=i[\delta,\deltabar]=0,
$$
that is 
$$
 [\Delta_\delta,L]=0.
$$
By Proposition \ref{prop:deltabar-delta-laplacians}, on an almost-K\"ahler manifold we have that 
$$
\Delta_d=\Delta_{\bar\delta}+\Delta_{\delta}+E_J=2\Delta_{\delta}+E_J,
$$
and we want to show that $E_J=0$ if and only if $J$ is integrable.\newline
Clearly, if $J$ is integrable, then $(X,J,g,\omega)$ is K\"ahler and as a consequence of the K\"ahler identities, 
$\Delta_d=2\Delta_{\del}$. \newline 
For the converse implication,  assume that $\Delta_d=2\Delta_{\delta}$. Then, by the above formula,
$$
dd^c+d^cd=-[\Delta_d,L]=-2[\Delta_\delta,L]=0,
$$
and, as noticed in Section \ref{section:preliminaries}, $d$ and $d^c$ anticommute if and only if $J$ is integrable.
\end{proof}

An immediate consequence of Proposition \ref{prop:deltabar-delta-laplacians} is also the following

\begin{cor}\label{cor:comparison-betti-numbers}
Let $(X,J,g,\omega)$ be a compact almost-K\"ahler manifold, then
$$
\mathcal{H}^\bullet_{\bar\delta}(X)\subseteq \mathcal{H}^\bullet_{dR}(X)\,,
$$
namely every $\bar\delta$-harmonic form is harmonic. In particular,
$$
h^\bullet_{\bar\delta}(X)\leq b_\bullet(X)\,,
$$
where $b_\bullet(X)$ denotes the Betti numbers of $X$.
\end{cor}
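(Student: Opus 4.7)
The plan is to apply Proposition \ref{prop:deltabar-delta-laplacians} directly: on a compact almost-K\"ahler manifold we have the pointwise identity
$$
\Delta_d = \Delta_{\bar\delta} + \Delta_{\delta} + E_J, \qquad E_J = \delta\bar\delta^* + \bar\delta^*\delta + \bar\delta\delta^* + \delta^*\bar\delta,
$$
together with the equality $\Delta_{\bar\delta} = \Delta_{\delta}$. The strategy is to show that any $\bar\delta$-harmonic form is annihilated by each of the three summands on the right-hand side, and therefore lies in $\Ker \Delta_d$.

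First I would take $\alpha \in \mathcal{H}^\bullet_{\bar\delta}(X)$. Since $\Delta_{\bar\delta}$ is elliptic on a compact manifold, harmonicity is equivalent to $\bar\delta \alpha = 0$ and $\bar\delta^* \alpha = 0$. By the identity $\Delta_{\bar\delta} = \Delta_{\delta}$ from Proposition \ref{prop:deltabar-delta-laplacians}, we also have $\mathcal{H}^\bullet_{\bar\delta}(X) = \mathcal{H}^\bullet_{\delta}(X)$, so $\delta \alpha = 0$ and $\delta^* \alpha = 0$ as well. Consequently every term in $E_J \alpha$ vanishes, since each of the four summands $\delta\bar\delta^*$, $\bar\delta^*\delta$, $\bar\delta\delta^*$, $\delta^*\bar\delta$ has one of the operators $\delta, \bar\delta, \delta^*, \bar\delta^*$ acting directly on $\alpha$.

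Plugging into the decomposition gives $\Delta_d \alpha = 0 + 0 + 0 = 0$, so $\alpha \in \mathcal{H}^\bullet_{dR}(X)$. Taking dimensions yields $h^\bullet_{\bar\delta}(X) \le b_\bullet(X)$, which is the numerical inequality. There is no real obstacle here: the entire content is already packaged in Proposition \ref{prop:deltabar-delta-laplacians}, and the argument is essentially a one-line check that each summand of the corrective term $E_J$ kills any form which is simultaneously $\delta$- and $\bar\delta$-harmonic.
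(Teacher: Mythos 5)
Your proof is correct and follows exactly the route the paper intends: the corollary is stated as an immediate consequence of Proposition \ref{prop:deltabar-delta-laplacians}, and your argument — using $\Delta_{\bar\delta}=\Delta_{\delta}$ to get $\delta\alpha=\delta^*\alpha=\bar\delta\alpha=\bar\delta^*\alpha=0$, hence $E_J\alpha=0$ and $\Delta_d\alpha=0$ — is precisely the spelled-out version of that. The only cosmetic remark is that the equivalence of $\Delta_{\bar\delta}\alpha=0$ with $\bar\delta\alpha=0$ and $\bar\delta^*\alpha=0$ comes from compactness and the $L^2$-pairing rather than from ellipticity per se, but this does not affect the argument.
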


We will see with an explicit example that the inequality $
h^\bullet_{\bar\delta}(X)\leq b_\bullet(X)
$ does not hold for an arbitrary compact almost-Hermitian manifold.

\begin{lemma}\label{lemma:sympl-harmonic-deltabar-harmonic}
Let $(X,J,g,\omega)$ be an almost-K\"ahler manifold, then $d^\Lambda=i(\bar\delta^*-\delta^*)$. In particular, a $(p,q)$-form is symplectic harmonic, i.e., it belongs to $\Ker d\cap\Ker d^\Lambda$, if and only if belongs to $\Ker\delta\cap\Ker\bar\delta\cap\Ker\delta^*\cap\Ker\bar\delta^*$.
\end{lemma}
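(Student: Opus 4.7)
The plan is to prove the two assertions separately, both by short direct arguments.

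For the first identity, I will exploit the decomposition $d=\delta+\bar\delta$ and the bilinearity of the graded commutator with $\Lambda$ to write
\[
d^\Lambda=[d,\Lambda]=[\delta,\Lambda]+[\bar\delta,\Lambda].
\]
The almost-K\"ahler identities of Lemma \ref{lemma:almost-kahler-identities} give $[\delta,\Lambda]=i\,\bar\delta^{*}$ and $[\bar\delta,\Lambda]=-i\,\delta^{*}$, so substitution yields $d^\Lambda=i(\bar\delta^{*}-\delta^{*})$ with no further work. (Alternatively, one could start from $d^\Lambda=-(d^c)^*$, already noted in Section \ref{section:diff-op-sympl}, and use $d^c=i(\bar\delta-\delta)$, but the bracket calculation is cleaner and uses only the lemma.)

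For the characterization of symplectic harmonicity on an $\alpha\in A^{p,q}(X)$, the key observation is the bi-degree behaviour of the four operators. Since $\delta=\partial+\bar\mu$ and $\bar\delta=\bar\partial+\mu$, one has
\[
\delta: A^{p,q}\to A^{p+1,q}\oplus A^{p-1,q+2},\qquad \bar\delta: A^{p,q}\to A^{p,q+1}\oplus A^{p+2,q-1},
\]
and the four target bi-degrees are pairwise distinct. Therefore the component-by-component vanishing of $d\alpha=\delta\alpha+\bar\delta\alpha$ forces $\delta\alpha=0$ and $\bar\delta\alpha=0$ separately. The same bi-degree separation applies to the adjoints: $\delta^{*}$ sends $A^{p,q}$ to $A^{p-1,q}\oplus A^{p+1,q-2}$ and $\bar\delta^{*}$ sends $A^{p,q}$ to $A^{p,q-1}\oplus A^{p-2,q+1}$, again four distinct bi-degrees. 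Combined with the first part, $d^\Lambda\alpha=i(\bar\delta^{*}-\delta^{*})\alpha=0$ then forces $\bar\delta^{*}\alpha=0$ and $\delta^{*}\alpha=0$. The converse direction is immediate since the four vanishing conditions trivially imply $d\alpha=0$ and $d^\Lambda\alpha=0$.

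There is no real obstacle in this proof; the only thing to be slightly careful about is checking that the bi-degree components in each of the two decompositions above are mutually disjoint, which is an elementary check that does not fail for any choice of $(p,q)$.
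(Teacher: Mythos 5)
Your proof is correct and follows essentially the same route as the paper: the identity $d^\Lambda=i(\bar\delta^*-\delta^*)$ comes from the almost-K\"ahler identities of Lemma \ref{lemma:almost-kahler-identities} (equivalently, from $d^\Lambda=-(d^c)^*$ together with $d^c=i(\bar\delta-\delta)$, which is how the paper phrases it), and the characterization on $(p,q)$-forms is the bidegree-separation argument that the paper leaves implicit. Your explicit check that the four target bidegrees of $\delta,\bar\delta$ (and of $\delta^*,\bar\delta^*$) are pairwise distinct is exactly the detail needed, and it is carried out correctly.
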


\begin{proof}
Since $d^c=-i(-\delbar+\del+\bar\mu-\mu)=i(\bar\delta-\delta)$
we have that
$(d^c)^*=i(-\delbar^*+\del^*+\bar\mu^*-\mu^*)=
i(\delta^*-\bar\delta^*)$ and so the thesis follows from $d^\Lambda=-(d^c)^*$ as noted in Section \ref{section:diff-op-sympl}.
\end{proof}

In general, the existence of a symplectic harmonic representative in every de-Rham cohomology class is equivalent to the Hard-Lefschetz condition (cf. \cite{brylinski},
\cite{yan},
\cite{mathieu}, \cite{tseng-yau-I}). Therefore, Tseng and Yau in \cite{tseng-yau-I} introduced the space
\[
H^k_{d+d^\Lambda}\left(X\right)
:=\frac{\ker(d+d^\Lambda)\cap A^k(X)}{\Imm dd^\Lambda\cap A^k(X)},
\]
and they study Hodge theory for it.
It turns out that $H^k_{d+d^\Lambda}\left(X\right)\simeq
\mathcal{H}^k_{d+d^\Lambda}\left(X\right)$ where
$$
\mathcal{H}^k_{d+d^\Lambda}\left(X\right)=
\text{Ker}\,d\cap \text{Ker}\,d^\Lambda\cap
\text{Ker}\,(dd^\Lambda)^*\,.
$$
Let us denote with $\mathcal{H}^{p,q}_{d+d^\Lambda}\left(X\right)$
the $(d+d^\Lambda)$-harmonic $(p,q)$-forms.

\begin{rem}
Notice that on a compact almost-K\"ahler manifold $(X^{2n},J,g,\omega)$ we have the inclusion
$$
\mathcal{H}^\bullet_{\deltabar}(X)\subseteq\mathcal{H}^\bullet_{d+d^\Lambda}(X),
$$
indeed if $\alpha\in\mathcal{H}^\bullet_{\deltabar}(X)$ then, by Proposition \ref{prop:deltabar-delta-laplacians},
$\alpha\in\mathcal{H}^\bullet_{\delta}(X)$, namely
$\delta\alpha=0$, $\delta^*\alpha=0$, $\deltabar\alpha=0$ and $\deltabar^*\alpha=0$.
Since $d=\delta+\deltabar$ and
$d^\Lambda=-(d^c)^*=-i(\delta^*-\deltabar^*)$ then we have the inclusion.\\
Moreover, if $J$ is $\mathcal{C}^{\infty}$-pure and full \cite{LZ} (e.g., this is always the case if $n=2$, see \cite{DLZ}) by Corollary \ref{cor:comparison-betti-numbers} and \cite[Theorem 4.2]{tardini-tomassini-instability}
one has
$$
\mathcal{H}^2_{\bar\delta}(X)\subseteq \mathcal{H}^2_{dR}(X)
\subseteq\mathcal{H}^2_{d+d^\Lambda}(X)\,
$$
and in particular, $h^2_{\bar\delta}(X)\leq b_2(X)\leq
 h^2_{d+d^\Lambda}(X).$
Recall that if $n=2$ by \cite[Theorem 4.5]{tardini-tomassini-proper-surjective} (cf. also \cite[Section 3.2]{tardini-proceeding})  $b_2(X)<h^2_{d+d^\Lambda}(X)$ unless $(X,\omega)$, as a symplectic manifold, satisfies the Hard Lefschetz condition.
\end{rem}

On bigraded forms we have a different situation from Corollary
\ref{cor:comparison-betti-numbers}.

\begin{theorem}\label{thm:equalities-harmonic-spaces}
Let $(X,J,g,\omega)$ be a compact almost-K\"ahler manifold, then on $(p,q)$-forms
$$
\mathcal{H}^{p,q}_{d+d^\Lambda}(X)=
\mathcal{H}^{p,q}_{\delta}(X)\cap\mathcal{H}^{p,q}_{\bar\delta}(X)=
\mathcal{H}^{p,q}_{\delbar}(X)\cap \mathcal{H}^{p,q}_{\del}(X)\cap
\mathcal{H}^{p,q}_{\bar\mu}(X)\cap\mathcal{H}^{p,q}_{\mu}(X)=
$$
$$
=\mathcal{H}^{p,q}_{\delbar}(X)\cap\mathcal{H}^{p,q}_{\mu}(X)=
\mathcal{H}^{p,q}_{d}(X)\,.
$$
\end{theorem}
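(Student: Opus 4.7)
The plan is to show that each of the five spaces in the chain coincides with the common ``maximal'' intersection
$$
\mathcal{M}^{p,q}(X):=\left\{\alpha\in A^{p,q}(X)\;:\;\del\alpha=\delbar\alpha=\mu\alpha=\bar\mu\alpha=\del^*\alpha=\delbar^*\alpha=\mu^*\alpha=\bar\mu^*\alpha=0\right\}\,.
$$
The driving principle is a bi-degree argument: acting on a pure $(p,q)$-form, the four operators $\mu,\del,\delbar,\bar\mu$ map into the four distinct bi-degrees $(p+2,q-1),(p+1,q),(p,q+1),(p-1,q+2)$, so $d\alpha=0$ forces each of the four components to vanish separately. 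The adjoints $\mu^*,\del^*,\delbar^*,\bar\mu^*$ likewise target four pairwise distinct bi-degrees, so any linear combination of them annihilating $\alpha$ forces every summand to annihilate $\alpha$.

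From this I obtain $\mathcal{H}^{p,q}_{d}(X)=\mathcal{M}^{p,q}(X)$ at once: $d\alpha=0$ splits into the first four vanishings and $d^*\alpha=0$ into the last four, and conversely. For $\mathcal{H}^{p,q}_{d+d^\Lambda}(X)=\mathcal{M}^{p,q}(X)$, I would invoke Lemma \ref{lemma:sympl-harmonic-deltabar-harmonic} to write $d^\Lambda=i(\bar\delta^*-\delta^*)=i(\delbar^*+\mu^*-\del^*-\bar\mu^*)$, so that the conditions $d\alpha=0$ and $d^\Lambda\alpha=0$ together split via the same bi-degree argument into the eight vanishings. Conversely, the eight vanishings imply $d\alpha=d^*\alpha=0$, whence $d^\Lambda\alpha=0$ and $(dd^\Lambda)^*\alpha=(d^\Lambda)^*d^*\alpha=0$ automatically.

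For the $\bar\delta$/$\delta$ pieces, Theorem \ref{thm:deltabar-harmonic} already identifies $\mathcal{H}^{p,q}_{\bar\delta}(X)=\mathcal{H}^{p,q}_{\delbar}(X)\cap\mathcal{H}^{p,q}_{\mu}(X)$ and $\mathcal{H}^{p,q}_{\delta}(X)=\mathcal{H}^{p,q}_{\del}(X)\cap\mathcal{H}^{p,q}_{\bar\mu}(X)$. Hence their intersection is exactly the four-fold intersection $\mathcal{M}^{p,q}(X)$ appearing in the statement. The critical remaining equality is $\mathcal{H}^{p,q}_{\delbar}\cap\mathcal{H}^{p,q}_{\mu}=\mathcal{M}^{p,q}(X)$: this is where the almost-K\"ahler hypothesis is decisive, via Proposition \ref{prop:deltabar-delta-laplacians}, which gives $\Delta_{\bar\delta}=\Delta_\delta$ and therefore $\mathcal{H}^{p,q}_{\bar\delta}=\mathcal{H}^{p,q}_\delta$. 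Consequently the two-term intersection $\mathcal{H}^{p,q}_{\delbar}\cap\mathcal{H}^{p,q}_{\mu}=\mathcal{H}^{p,q}_{\bar\delta}=\mathcal{H}^{p,q}_{\delta}=\mathcal{H}^{p,q}_{\del}\cap\mathcal{H}^{p,q}_{\bar\mu}$ already contains the $\del$-, $\bar\mu$-, $\del^*$- and $\bar\mu^*$-harmonicity conditions.

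The only non-routine input is this last equality: in the general almost-Hermitian setting one has merely the inclusion $\mathcal{H}^{p,q}_{\delbar}\cap\mathcal{H}^{p,q}_{\mu}\subseteq\mathcal{H}^{p,q}_{\bar\delta}$, and a $\bar\delta$-harmonic form need not be $\delta$-harmonic; the almost-K\"ahler identity $\Delta_{\bar\delta}=\Delta_\delta$ is precisely what collapses the chain. Everything else amounts to systematically unpacking the bi-degree decomposition of $d$, $d^*$, and $d^\Lambda$.
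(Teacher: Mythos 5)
Your proposal is correct, and it reaches the statement by a somewhat different route than the paper. The paper obtains the equalities $\mathcal{H}^{p,q}_{\delbar}\cap\mathcal{H}^{p,q}_{\del}\cap\mathcal{H}^{p,q}_{\bar\mu}\cap\mathcal{H}^{p,q}_{\mu}=\mathcal{H}^{p,q}_{\delbar}\cap\mathcal{H}^{p,q}_{\mu}=\mathcal{H}^{p,q}_{d}$ by rewriting the intersections as kernels of sums of Laplacians and citing \cite[Proposition 3.3, Theorem 4.3]{cirici-wilson-2}, and it treats $\mathcal{H}^{p,q}_{d+d^\Lambda}$ by passing through $d^c$ (the conditions $d\alpha=0$, $d^\Lambda\alpha=0$, $(dd^\Lambda)^*\alpha=0$ become $d\alpha=0$, $d^c*\alpha=0$, $d*d^c\alpha=0$, the last being superfluous in pure bidegree). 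You instead reduce every space in the chain to the single set of eight conditions $\del\alpha=\delbar\alpha=\mu\alpha=\bar\mu\alpha=0$ and $\del^*\alpha=\delbar^*\alpha=\mu^*\alpha=\bar\mu^*\alpha=0$: the identification with $\mathcal{H}^{p,q}_{d}$ is an elementary bidegree splitting of $d$ and $d^*$ (compactness converting $\Delta_d$-harmonicity into the two closedness conditions); the identification with $\mathcal{H}^{p,q}_{d+d^\Lambda}$ uses Lemma \ref{lemma:sympl-harmonic-deltabar-harmonic} and the same splitting, with the third condition $(dd^\Lambda)^*\alpha=(d^\Lambda)^*d^*\alpha=0$ recovered for free in the converse; and the crucial collapse $\mathcal{H}^{p,q}_{\delbar}\cap\mathcal{H}^{p,q}_{\mu}=\mathcal{H}^{p,q}_{\bar\delta}=\mathcal{H}^{p,q}_{\delta}=\mathcal{H}^{p,q}_{\del}\cap\mathcal{H}^{p,q}_{\bar\mu}$ comes from Theorem \ref{thm:deltabar-harmonic} combined with the operator identity $\Delta_{\bar\delta}=\Delta_{\delta}$ of Proposition \ref{prop:deltabar-delta-laplacians}, which is indeed the only place the almost-K\"ahler hypothesis is needed. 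What your route buys is self-containedness within the paper's own results and a clear localization of where almost-K\"ahlerness enters; what the paper's route buys is brevity and an explicit bridge to the Cirici--Wilson spaces $\Ker(\Delta_{\delbar}+\Delta_{\mu})$, which reappear in the examples. Both arguments are sound, so there is no gap to repair.
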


\begin{proof}
Notice that the equality $\mathcal{H}^{p,q}_{\delta}(X)\cap\mathcal{H}^{p,q}_{\bar\delta}(X)=
\mathcal{H}^{p,q}_{\delbar}(X)\cap \mathcal{H}^{p,q}_{\del}(X)\cap
\mathcal{H}^{p,q}_{\bar\mu}(X)\cap\mathcal{H}^{p,q}_{\mu}(X)$
follows from Theorem \ref{thm:deltabar-harmonic}.\\
The equalities $\mathcal{H}^{p,q}_{\delbar}(X)\cap \mathcal{H}^{p,q}_{\del}(X)\cap
\mathcal{H}^{p,q}_{\bar\mu}(X)\cap\mathcal{H}^{p,q}_{\mu}(X)
=\mathcal{H}^{p,q}_{\delbar}(X)\cap\mathcal{H}^{p,q}_{\mu}(X)=
\mathcal{H}^{p,q}_{d}(X)
$
follow from \cite[Proposition 3.3, Theorem 4.3]{cirici-wilson-2}.\\
Indeed,
$$
\mathcal{H}^{p,q}_{\delbar}(X)\cap \mathcal{H}^{p,q}_{\del}(X)\cap
\mathcal{H}^{p,q}_{\bar\mu}(X)\cap\mathcal{H}^{p,q}_{\mu}(X)=
\text{Ker}\,(\Delta_{\delbar}+\Delta_{\del}+\Delta_{\bar\mu}+
\Delta_{\mu})
$$
$$
=\text{Ker}\,(\Delta_{\delbar}+\Delta_{\mu})\cap
\text{Ker}\,(\Delta_{\del}+\Delta_{\bar\mu})=
\text{Ker}\,(\Delta_{\delbar}+\Delta_{\mu})=
\text{Ker}\,(\Delta_{\delbar})\cap \text{Ker}\,(\Delta_{\mu})\,.
$$
\\
We just need to prove that $\mathcal{H}^{p,q}_{d+d^\Lambda}(X)=
\mathcal{H}^{p,q}_{\delta}(X)\cap\mathcal{H}^{p,q}_{\bar\delta}(X)$.
Let $\alpha\in\mathcal{H}^{p,q}_{d+d^\Lambda}(X)$, then $d\alpha=0$,
$d^\Lambda\alpha=0$ and $dd^\Lambda*\alpha=0$, or equivalently
$d\alpha=0$, $d^c*\alpha=0$ and $d*d^c\alpha=0$. Since on $(p,q)$-forms
$d\alpha=0$ implies $d^c\alpha=0$ the last condition is superfluous, and $d\alpha=0$, $d^c*\alpha=0$ is equivalent to $\delta\alpha=0$, $\bar\delta\alpha=0$, $\delta*\alpha=0$, $\bar\delta*\alpha=0$ (cf. Lemma
\ref{lemma:sympl-harmonic-deltabar-harmonic}).
\end{proof}

\begin{theorem}
Let $(X,J,g,\omega)$ be an almost-K\"ahler manifold, then
$\Delta_{BC(\delta,\bar\delta)}$, $\Delta_{\bar\delta}$ are related by 
$$
\Delta_{BC(\delta,\bar\delta)}=\Delta_{\bar\delta}^2+\bar\delta^*\bar\delta+\delta^*\delta+F_J
$$
where
$$
F_J:=-\delta\left(\delta\bar\delta^*+\bar\delta^*\delta\right)\bar\delta^*+
\left(\delta\bar\delta^*+\bar\delta^*\delta\right)\delta^*\bar\delta+
\delta^*\bar\delta\left(\delta\bar\delta^*+\bar\delta^*\delta\right)-\delta^*\left(\delta\bar\delta^*+\bar\delta^*\delta\right)\bar\delta\,.
$$
\end{theorem}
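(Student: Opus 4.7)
The identity is purely algebraic and the plan is to reduce it to a bookkeeping exercise in the operators $\delta,\bar\delta,\delta^*,\bar\delta^*$. Expanding the definition of $\Delta_{BC(\delta,\bar\delta)}$, the summands $\bar\delta^*\bar\delta+\delta^*\delta$ appear verbatim on the right-hand side, so it suffices to establish
$$
T \;:=\; \delta\bar\delta\bar\delta^*\delta^* + \bar\delta^*\delta^*\delta\bar\delta + \bar\delta^*\delta\delta^*\bar\delta + \delta^*\bar\delta\bar\delta^*\delta \;=\; \Delta_{\bar\delta}^2 + F_J\,.
$$
This is an identity between fourth-order operators, and the only structural tools available are the relations $\delta\bar\delta+\bar\delta\delta=0$ and $\bar\delta^*\delta^*+\delta^*\bar\delta^*=0$ (from the preliminary lemma) and the almost-K\"ahler identity $\Delta_\delta=\Delta_{\bar\delta}$ (Proposition \ref{prop:deltabar-delta-laplacians}).

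My strategy is to introduce the shorthand $A:=\delta\bar\delta^*+\bar\delta^*\delta$ (so $F_J = -\delta A\bar\delta^* + A\delta^*\bar\delta + \delta^*\bar\delta A - \delta^* A\bar\delta$), and to repeatedly use the substitutions $\delta\bar\delta^* = A-\bar\delta^*\delta$ and $\bar\delta^*\delta = A-\delta\bar\delta^*$ whenever such a pair appears in an inconvenient order inside $T$. In each of the four terms of $T$ this creates two sub-terms: one that, after applying $\delta\bar\delta=-\bar\delta\delta$ (or its adjoint) to reorder the remaining factors, contributes to $\Delta_{\bar\delta}^2 = (\bar\delta\bar\delta^*+\bar\delta^*\bar\delta)^2$, and one that is a third-order product of the form $X\,A\,Y$ with $X,Y\in\{\delta,\bar\delta,\delta^*,\bar\delta^*\}$. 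Using $\Delta_\delta=\Delta_{\bar\delta}$ I can freely swap one factor of $\Delta_{\bar\delta}$ for $\Delta_\delta = \delta\delta^*+\delta^*\delta$, which is what lets the bookkeeping close: for instance $\delta\bar\delta\bar\delta^*\delta^* = -\bar\delta(A-\bar\delta^*\delta)\delta^* = -\bar\delta A\delta^* + \bar\delta\bar\delta^*\delta\delta^*$, the second piece being a contribution to $\Delta_{\bar\delta}\Delta_\delta = \Delta_{\bar\delta}^2$.

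The main obstacle is purely combinatorial: carrying out the above substitution in each of the four monomials in $T$ produces a total of eight fourth-order and four third-order contributions, and one must track signs from $\delta\bar\delta=-\bar\delta\delta$ carefully to check that the fourth-order pieces reassemble into the four summands of $\Delta_{\bar\delta}^2=\Delta_\delta\Delta_{\bar\delta}$, and that the residual third-order pieces coincide, monomial by monomial, with the four summands defining $F_J$. No new analytic input is required beyond the two relations already cited; once the substitutions are set up consistently, the verification is a finite (if somewhat tedious) check.
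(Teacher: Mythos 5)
Your reduction of the statement to the fourth--order identity $T:=\delta\bar\delta\bar\delta^*\delta^*+\bar\delta^*\delta^*\delta\bar\delta+\bar\delta^*\delta\delta^*\bar\delta+\delta^*\bar\delta\bar\delta^*\delta=\Delta_{\bar\delta}^2+F_J$ is correct, and a purely operator--algebraic route along your lines does exist; but your list of admissible tools is incomplete, and with only $\delta\bar\delta+\bar\delta\delta=0$, its adjoint, and $\Delta_\delta=\Delta_{\bar\delta}$ the bookkeeping does \emph{not} close. Writing $A:=\delta\bar\delta^*+\bar\delta^*\delta$ and $B:=A^*=\bar\delta\delta^*+\delta^*\bar\delta$, a straightforward cancellation using exactly your three relations reduces the difference to
\begin{equation*}
T-F_J-\Delta_\delta\Delta_{\bar\delta}
=\delta\bar\delta\bar\delta^*\delta^*+\delta^2\bar\delta^{*2}+\delta\bar\delta^*\delta\bar\delta^*-\delta\delta^*\bar\delta\bar\delta^*
=\delta\,(A-B)\,\bar\delta^*,
\end{equation*}
which is not zero in the free algebra on $\delta,\bar\delta,\delta^*,\bar\delta^*$ modulo those relations. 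The missing ingredient is precisely the identity $\delta\bar\delta^*+\bar\delta^*\delta=\bar\delta\delta^*+\delta^*\bar\delta$ (self-adjointness of $A$), which is a genuinely almost-K\"ahler fact: the paper states it explicitly at the end of its proof, proves it in the Proposition immediately following the theorem, and that proof uses the almost-K\"ahler identities $[\delta,\Lambda]=i\bar\delta^*$, $[\bar\delta,\Lambda]=-i\delta^*$ together with \cite[Lemma 3.7]{debartolomeis-tomassini}. So your assertion that ``no new analytic input is required beyond the two relations already cited'' is wrong; without $A=A^*$ the theorem as stated is not an algebraic consequence of your toolkit.

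Relatedly, your claim that the residual third-order pieces ``coincide, monomial by monomial, with the four summands defining $F_J$'' fails as set up: your own sample term produces the residue $-\bar\delta A\delta^*$, which is not among the summands $-\delta A\bar\delta^*$, $A\delta^*\bar\delta$, $\delta^*\bar\delta A$, $-\delta^*A\bar\delta$ of $F_J$; it matches $-\delta A\bar\delta^*$ only after taking adjoints and invoking $A=A^*$ once more. If you add the identity $A=A^*$ to your hypotheses (citing the paper's subsequent Proposition or reproving it from the $\Lambda$-identities), your strategy does go through and is in fact cleaner than the paper's argument, which instead substitutes $\delta^*=i[\bar\delta,\Lambda]$ and $\bar\delta^*=-i[\delta,\Lambda]$ into $\Delta_{\bar\delta}^2=\Delta_\delta\Delta_{\bar\delta}$ and manipulates the resulting $\Lambda$-terms before invoking the same identity $A=A^*$ at the end. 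As written, however, the proposal has a genuine gap.
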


\begin{proof}
First of all, since it will be useful in the following, we notice that by the almost-K\"ahler identities 
$\delta^*=i\,[\deltabar,\Lambda]$ and $\deltabar^*=-i\,[\delta,\Lambda]$
we obtain
$$
\delta^*\deltabar=i(\deltabar\Lambda\deltabar-\Lambda\deltabar^2)\,,\quad
\deltabar\delta^*=i(\deltabar^2\Lambda-\deltabar\Lambda\deltabar)
$$
and similarly for their conjugates.\\
Recall that when $J$ is non-integrable $\delta^2\neq 0$ and $\bar\delta^2\neq 0$ and so we cannot cancel them out in these expressions. 
By Proposition \ref{prop:deltabar-delta-laplacians}
$$
\Delta_{\bar\delta}^2=\Delta_{\delta}\Delta_{\bar\delta}=
\delta\delta^*\deltabar\deltabar^*+
\delta\delta^*\deltabar^*\deltabar+
\delta^*\delta\deltabar\deltabar^*+
\delta^*\delta\deltabar^*\deltabar.
$$
Now in the first and fourth terms we use the previous formulas, and in the second and third terms we use the fact that $\delta$ and $\deltabar$ anticommute. Hence, we get
$$
\Delta_{\bar\delta}^2=
\delta(i\deltabar\Lambda\deltabar-i\Lambda\deltabar^2)\deltabar^*
-\delta\deltabar^*\delta^*\deltabar
-\delta^*\deltabar\delta\deltabar^*
+\delta^*(-i\delta^2\Lambda+i\delta\Lambda\delta)\deltabar\,.
$$
Using again that $\delta^*=i\,[\deltabar,\Lambda]$ one has
$$
i\delta\deltabar\Lambda\deltabar\deltabar^*=
\delta\deltabar(-\delta^*\deltabar^*+i\deltabar\Lambda\deltabar^*)=
i\delta\deltabar^2\Lambda\deltabar^*+\delta\deltabar\deltabar^*\delta^*
$$
and so the first term in the previous expression of $\Delta_{\bar\delta}^2$
becomes
$$
\delta(i\deltabar\Lambda\deltabar-i\Lambda\deltabar^2)\deltabar^*=
i\delta\deltabar^2\Lambda\deltabar^*+
\delta\deltabar\deltabar^*\delta^*
-i\delta\Lambda\deltabar^2\deltabar^*=
\delta\deltabar\deltabar^*\delta^*+
\delta(i\deltabar^2\Lambda-i\Lambda\deltabar^2)\deltabar^*
$$
$$
=\delta\deltabar\deltabar^*\delta^*+
\delta(\delta^*\deltabar+\deltabar\delta^*)\deltabar^*
$$
and similarly the fourth term becomes
$$
\delta^*(-i\delta^2\Lambda+i\delta\Lambda\delta)\deltabar=
-i\delta^*\delta^2\Lambda\deltabar+
\deltabar^*\delta^*\delta\deltabar
+i\delta^*\Lambda\delta^2\deltabar=
\deltabar^*\delta^*\delta\deltabar+
\delta^*(-i\delta^2\Lambda+i\Lambda\delta^2)\deltabar
$$
$$
=\deltabar^*\delta^*\delta\deltabar+
\delta^*(\delta\deltabar^*+\deltabar^*\delta)\deltabar\,.
$$
For the second term using again from the K\"ahler identities that
$\delta\deltabar^*=-i\delta^2\Lambda+i\delta\Lambda\delta$ and
$\deltabar^*=-i\,[\delta,\Lambda]$ one has
$$
-\delta\deltabar^*\delta^*\deltabar=
i\delta^2\Lambda\delta^*\deltabar-i\delta\Lambda\delta\delta^*\deltabar=
i\delta^2\Lambda\delta^*\deltabar+\deltabar^*\delta\delta^*\deltabar
-i\Lambda\delta^2\delta^*\deltabar\,=
\deltabar^*\delta\delta^*\deltabar+
(i\delta^2\Lambda-i\Lambda\delta^2)\delta^*\deltabar
$$
$$
=\deltabar^*\delta\delta^*\deltabar-
(\delta\deltabar^*+\deltabar^*\delta)\delta^*\deltabar
$$
and similarly for the third term
$$
-\delta^*\deltabar\delta\deltabar^*=
i\delta^*\deltabar\delta^2\Lambda+
\delta^*\deltabar\deltabar^*\delta
-i\delta^*\deltabar\Lambda\delta^2=
\delta^*\deltabar\deltabar^*\delta+
\delta^*\deltabar(i\delta^2\Lambda-i\Lambda\delta^2)
$$
$$
=\delta^*\deltabar\deltabar^*\delta-
\delta^*\deltabar(\delta\deltabar^*+\deltabar^*\delta)
\,.
$$
Putting all this together we obtain
$$
\Delta_{\bar\delta}^2=
\Delta_{BC(\delta,\deltabar)}-\deltabar^*\deltabar-\delta^*\delta-F_J
$$
concluding the proof.
Here in the expression of $F_J$ we have used that
$$
\delta^*\bar\delta+\bar\delta\delta^*=
\delta\bar\delta^*+\bar\delta^*\delta
$$
We prove this last statement separately in the following Proposition.
\end{proof}

Clearly, if $J$ is integrable we recover the classical relations between the Bott-Chern and Dolbeault Laplacians (cf. e.g., \cite{schweitzer}), namely on K\"ahler manifolds
$$
\Delta_{BC}=\Delta_{\delbar}^2+\delbar^*\delbar+\del^*\del\,.
$$
In particular, $F_J=0$ since by the K\"ahler identities $\del\delbar^*+\delbar^*\del=0$.

\begin{prop}
Let $(X,J,g,\omega)$ be an almost-K\"ahler manifold, then
$$
\bar\delta\delta^*+\delta^*\bar\delta=
\delta\bar\delta^*+\bar\delta^*\delta\,.
$$
In particular,
\begin{itemize}
\item[$\bullet$]
$
\delta\bar\delta^*+\bar\delta^*\delta=
\del\delbar^*+\delbar^*\del+\delbar\del^*+\del^*\delbar\,,
$
\item[$\bullet$] $E_J=2(\delta\bar\delta^*+\bar\delta^*\delta)$.
\end{itemize}
\end{prop}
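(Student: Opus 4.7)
The plan is to rewrite both symmetrized anticommutators in the claimed main equality as commutators of $\Lambda$ with squares of first-order operators, and then invoke the relation $\delta^2+\bar\delta^2=0$ from the first Lemma in Section \ref{section:preliminaries}.

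First, from Lemma \ref{lemma:almost-kahler-identities} we have $\bar\delta^*=-i[\delta,\Lambda]$ and $\delta^*=i[\bar\delta,\Lambda]$. Combining these with the standard identity $[AB,\Lambda]=A[B,\Lambda]+[A,\Lambda]B$, which is valid because $\Lambda$ has even degree $-2$, I would compute
$$
\delta\bar\delta^*+\bar\delta^*\delta=-i\bigl(\delta[\delta,\Lambda]+[\delta,\Lambda]\delta\bigr)=-i[\delta^2,\Lambda],
$$
and symmetrically $\bar\delta\delta^*+\delta^*\bar\delta=i[\bar\delta^2,\Lambda]$. Since $\delta^2+\bar\delta^2=0$, we have $i[\bar\delta^2,\Lambda]=-i[\delta^2,\Lambda]$, which proves the main equality.

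For the first bullet I would expand via $\delta^2=\partial^2-\bar\partial^2$ and the bigraded almost-K\"ahler identities $[\partial,\Lambda]=i\bar\partial^*$ and $[\bar\partial,\Lambda]=-i\partial^*$ to obtain $[\partial^2,\Lambda]=i(\partial\bar\partial^*+\bar\partial^*\partial)$ and $[\bar\partial^2,\Lambda]=-i(\bar\partial\partial^*+\partial^*\bar\partial)$. Substituting,
$$
\delta\bar\delta^*+\bar\delta^*\delta=-i[\partial^2-\bar\partial^2,\Lambda]=\partial\bar\partial^*+\bar\partial^*\partial+\bar\partial\partial^*+\partial^*\bar\partial.
$$
For the second bullet, the formula $E_J=\delta\bar\delta^*+\bar\delta^*\delta+\bar\delta\delta^*+\delta^*\bar\delta$ recalled in Proposition \ref{prop:deltabar-delta-laplacians} combined with the main equality immediately gives $E_J=2(\delta\bar\delta^*+\bar\delta^*\delta)$.

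The only subtle point is bookkeeping the factors of $i$ and confirming that $[\,\cdot\,,\Lambda]$ satisfies the ordinary (non-graded) Leibniz rule used above; beyond this the argument is essentially a one-line manipulation, so I do not anticipate any serious obstacle.
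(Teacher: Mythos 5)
Your argument is correct, but it is a genuinely different proof from the one in the paper. You work entirely at the level of $\delta$ and $\bar\delta$: from the almost-K\"ahler identities $[\delta,\Lambda]=i\,\bar\delta^*$ and $[\bar\delta,\Lambda]=-i\,\delta^*$ you package both anticommutators as commutators with $\Lambda$, namely $\delta\bar\delta^*+\bar\delta^*\delta=-i[\delta^2,\Lambda]$ and $\bar\delta\delta^*+\delta^*\bar\delta=i[\bar\delta^2,\Lambda]$ (the telescoping $\delta[\delta,\Lambda]+[\delta,\Lambda]\delta=[\delta^2,\Lambda]$ is an identity of ordinary commutators, and since $\Lambda$ has even degree the bracket in the paper is indeed the ordinary commutator, so your only ``subtle point'' is harmless), and then conclude from $\delta^2+\bar\delta^2=0$; the first bullet follows the same way from $\delta^2=\del^2-\delbar^2$ and the bigraded identities, and the second is immediate from the formula for $E_J$. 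The paper instead never uses $\Lambda$ directly at this stage: it expands $\bar\delta\delta^*+\delta^*\bar\delta$ in the four components $\del,\delbar,\mu,\bar\mu$, invokes the finer identities $\mu\bar\mu^*+\bar\mu^*\mu=0$ and $\delbar\del^*+\del^*\delbar=\del\mu^*+\mu^*\del+\bar\mu\delbar^*+\delbar^*\bar\mu$ from de Bartolomeis--Tomassini, and matches the two sides by conjugation. Your route is shorter and more structural, exploiting only the coarse $\delta,\bar\delta$-level identities plus the purely algebraic relation $\delta^2+\bar\delta^2=0$, and it also yields the first bullet by the same mechanism; the paper's computation is heavier but exhibits explicitly how the mixed terms in the four-component expansion rearrange, which is the form of the identity actually reused elsewhere in that section.
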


\begin{proof}
We have
$$
\bar\delta\delta^*+\delta^*\bar\delta=
(\delbar+\mu)(\del^*+\bar\mu^*)+(\del^*+\bar\mu^*)(\delbar+\mu)=
$$
$$
=\delbar\del^*+\del^*\delbar+\delbar\bar\mu^*+\bar\mu^*\delbar+\mu\del^*+\del^*\mu+\mu\bar\mu^*+\bar\mu^*\mu\,.
$$
Now, by \cite[Lemma 3.7]{debartolomeis-tomassini} we have
$$
\mu\bar\mu^*+\bar\mu^*\mu=0
$$
and
$$
\delbar\del^*+\del^*\delbar=\del\mu^*+\mu^*\del+\bar\mu\delbar^*+\delbar^*\bar\mu\,,
$$
hence
$$
\bar\delta\delta^*+\delta^*\bar\delta=
\del\mu^*+\mu^*\del+\bar\mu\delbar^*+\delbar^*\bar\mu+
\delbar\bar\mu^*+\bar\mu^*\delbar+\mu\del^*+\del^*\mu\,.
$$
Using conjugation we have
$$
\delta\bar\delta^*+\bar\delta^*\delta=
\del\delbar^*+\delbar^*\del+\del\mu^*+\mu^*\del+\bar\mu\delbar^*+\delbar^*\bar\mu=
$$
$$
=\mu\del^*+\del^*\mu+\delbar\bar\mu^*+\bar\mu^*\delbar+
\del\mu^*+\mu^*\del+\bar\mu\delbar^*+\delbar^*\bar\mu\,,
$$
therefore $\bar\delta\delta^*+\delta^*\bar\delta=
\delta\bar\delta^*+\bar\delta^*\delta\,.$
\end{proof}

\begin{prop}\label{prop:bc-uguale-derham}
Let $(X,J,g,\omega)$ be a compact almost-K\"ahler manifold, then
$$
\mathcal{H}^{\bullet}_{BC(\delta,\bar\delta)}(X)=\mathcal{H}^{\bullet}_{\bar\delta}(X)\,.
$$
\end{prop}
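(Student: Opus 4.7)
The plan is to establish the equality by a double inclusion, combining the characterization of BC-harmonic forms from Lemma \ref{lemma:equivalence-bc-harmonic}, the almost-K\"ahler identities of Lemma \ref{lemma:almost-kahler-identities}, and the identity $\Delta_{\bar\delta}=\Delta_{\delta}$ on almost-K\"ahler manifolds (Proposition \ref{prop:deltabar-delta-laplacians}). The inclusion $\mathcal{H}^{\bullet}_{\bar\delta}(X)\subseteq\mathcal{H}^{\bullet}_{BC(\delta,\bar\delta)}(X)$ is immediate: if $\alpha$ is $\bar\delta$-harmonic, then Proposition \ref{prop:deltabar-delta-laplacians} also makes it $\delta$-harmonic, so $\delta\alpha=\bar\delta\alpha=\delta^*\alpha=\bar\delta^*\alpha=0$; in particular $(\delta\bar\delta)^*\alpha=\bar\delta^*\delta^*\alpha=0$, which together with $\delta\alpha=\bar\delta\alpha=0$ is exactly the BC-harmonicity condition by Lemma \ref{lemma:equivalence-bc-harmonic}.

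For the interesting inclusion $\mathcal{H}^{\bullet}_{BC(\delta,\bar\delta)}(X)\subseteq\mathcal{H}^{\bullet}_{\bar\delta}(X)$, I would take $\alpha\in\mathcal{H}^{\bullet}_{BC(\delta,\bar\delta)}(X)$ and invoke Lemma \ref{lemma:equivalence-bc-harmonic} to obtain $\delta\alpha=\bar\delta\alpha=\bar\delta^*\delta^*\alpha=0$. The task is then to upgrade the mixed second-order vanishing $\bar\delta^*\delta^*\alpha=0$ to the individual conditions $\delta^*\alpha=\bar\delta^*\alpha=0$. The key move is the almost-K\"ahler identity $\delta^*=i[\bar\delta,\Lambda]$: combined with $\bar\delta\alpha=0$ it gives $\delta^*\alpha=i\,\bar\delta(\Lambda\alpha)$. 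Substituting back, $0=\bar\delta^*\delta^*\alpha=i\,\bar\delta^*\bar\delta(\Lambda\alpha)$, and compactness together with self-adjointness of $\bar\delta^*\bar\delta$ yields
$$\|\bar\delta(\Lambda\alpha)\|^2=\langle\bar\delta^*\bar\delta(\Lambda\alpha),\Lambda\alpha\rangle=0,$$
hence $\delta^*\alpha=i\,\bar\delta(\Lambda\alpha)=0$.

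A symmetric step handles $\bar\delta^*\alpha$: taking adjoints in the anticommutation relation $\delta\bar\delta+\bar\delta\delta=0$ yields $\delta^*\bar\delta^*\alpha=-\bar\delta^*\delta^*\alpha=0$. Using the dual identity $\bar\delta^*=-i[\delta,\Lambda]$ and $\delta\alpha=0$ one has $\bar\delta^*\alpha=-i\,\delta(\Lambda\alpha)$, so $0=\delta^*\bar\delta^*\alpha=-i\,\delta^*\delta(\Lambda\alpha)$ forces $\|\delta(\Lambda\alpha)\|^2=0$, i.e.\ $\bar\delta^*\alpha=0$. Combined with $\bar\delta\alpha=0$, this gives $\Delta_{\bar\delta}\alpha=0$, so $\alpha\in\mathcal{H}^{\bullet}_{\bar\delta}(X)$.

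The main obstacle is exactly this second direction: BC-harmonicity supplies only the coupled equation $\bar\delta^*\delta^*\alpha=0$, and the role of the almost-K\"ahler identities is to convert it into the positivity statements $\|\bar\delta(\Lambda\alpha)\|^2=0$ and $\|\delta(\Lambda\alpha)\|^2=0$ separately. Note that $\delta^2\neq 0$ in the non-integrable setting, but the argument never requires cancelling $\delta^2$ or $\bar\delta^2$; this is why the strategy, unlike the one proceeding through the explicit formula for $\Delta_{BC(\delta,\bar\delta)}$ in terms of $\Delta_{\bar\delta}^2$ and $F_J$, avoids any analysis of the correction term $F_J$.
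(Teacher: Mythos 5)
Your proof is correct and follows essentially the same route as the paper: the easy inclusion via $\Delta_{\bar\delta}=\Delta_{\delta}$, and the hard inclusion by converting the mixed condition $(\delta\bar\delta)^*\alpha=0$ through the almost-K\"ahler identities into $\|\delta(\Lambda\alpha)\|^2=0$ by pairing with $\Lambda\alpha$. The only difference is that you run the pairing argument twice to get both $\delta^*\alpha=0$ and $\bar\delta^*\alpha=0$, whereas the paper performs it once (for $\bar\delta^*\alpha=0$), which already suffices together with $\bar\delta\alpha=0$.
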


\begin{proof}

Let $\alpha\in\mathcal{H}^k_{BC(\delta,\bar\delta)}(X)$; then by Lemma \ref{lemma:equivalence-bc-harmonic}, $\delta\alpha=0$,
$\bar\delta\alpha=0$ and $\delta^*\bar\delta^*\alpha=0$. We need to prove that $\bar\delta^*\alpha=0$. Using the almost-K\"ahler identities we have
$$
0=\delta^*\bar\delta^*\alpha=-i\delta^*[\delta,\Lambda]\alpha
$$
which means that $\delta^*\delta\Lambda\alpha=0$. Therefore, pairing with
$\Lambda\alpha$,
$$
0=(\delta^*\delta\Lambda\alpha,\Lambda\alpha)=
\vert\delta\Lambda\alpha\vert^2
$$
hence $\delta\Lambda\alpha=0$.
This, means that $\bar\delta^*\alpha=-i[\delta,\Lambda]\alpha=\delta\Lambda\alpha=0$, giving the first inclusion
$\mathcal{H}^{\bullet}_{BC(\delta,\bar\delta)}(X)\subseteq\mathcal{H}^{\bullet}_{\bar\delta}(X)$.\\
We now prove the other inclusion
$\mathcal{H}^{\bullet}_{\bar\delta}(X)\subseteq\mathcal{H}^{\bullet}_{BC(\delta,\bar\delta)}(X)$. Let $\alpha\in\mathcal{H}^k_{\bar\delta}(X)$, i.e., 
$\bar\delta\alpha=0$ and $\bar\delta^*\alpha=0$.
Moreover, since $\mathcal{H}^{\bullet}_{\bar\delta}(X)=
\mathcal{H}^{\bullet}_{\delta}(X)$ we also have that
$\delta\alpha=0$ and $\delta^*\alpha=0$. Hence, putting these relations together we have that $\bar\delta\alpha=0$, $\delta\alpha=0$ and $\delta^*\bar\delta^*\alpha=0$, i.e., by definition
$\alpha\in\mathcal{H}^{\bullet}_{BC(\delta,\bar\delta)}(X)$ giving the second inclusion.
\end{proof}

\begin{cor}
Let $(X,J,g,\omega)$ be a compact almost-K\"ahler manifold, then
$$
\mathcal{H}^{\bullet,\bullet}_{BC(\delta,\bar\delta)}(X)=\mathcal{H}^{\bullet,\bullet}_{d}(X)\,.
$$
\end{cor}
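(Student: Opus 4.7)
The plan is to chain together results already established in the almost-K\"ahler setting; the corollary requires no new computation, only the correct composition of three earlier statements.

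First, I would invoke Proposition \ref{prop:bc-uguale-derham}, which provides the equality $\mathcal{H}^{\bullet}_{BC(\delta,\bar\delta)}(X)=\mathcal{H}^{\bullet}_{\bar\delta}(X)$ in total degree as an equality of subspaces of $A^{\bullet}(X)$. Since both spaces are defined as kernels of the appropriate Laplacians acting on the full space of $k$-forms, intersecting both sides with $A^{p,q}(X)$ yields the bigraded identification
$$
\mathcal{H}^{p,q}_{BC(\delta,\bar\delta)}(X)=\mathcal{H}^{p,q}_{\bar\delta}(X).
$$
This is purely formal and does not depend on the operators being bi-degree preserving, since the spaces $\mathcal{H}^{p,q}_{*}(X)$ are defined precisely as $\ker\Delta_{*}\cap A^{p,q}(X)$.

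Next, by Proposition \ref{prop:deltabar-delta-laplacians} one has $\Delta_{\bar\delta}=\Delta_{\delta}$ on any almost-K\"ahler manifold, hence $\mathcal{H}^{p,q}_{\bar\delta}(X)=\mathcal{H}^{p,q}_{\delta}(X)$, so trivially
$$
\mathcal{H}^{p,q}_{\bar\delta}(X)=\mathcal{H}^{p,q}_{\delta}(X)\cap\mathcal{H}^{p,q}_{\bar\delta}(X).
$$
Finally, the chain of equalities in Theorem \ref{thm:equalities-harmonic-spaces} identifies this intersection with $\mathcal{H}^{p,q}_{d}(X)$. Concatenating these three steps gives
$$
\mathcal{H}^{p,q}_{BC(\delta,\bar\delta)}(X)=\mathcal{H}^{p,q}_{\bar\delta}(X)=\mathcal{H}^{p,q}_{\delta}(X)\cap\mathcal{H}^{p,q}_{\bar\delta}(X)=\mathcal{H}^{p,q}_{d}(X),
$$
which is the claim for every bidegree $(p,q)$.

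There is essentially no obstacle in this argument, since the genuine work has already been carried out: the nontrivial content lies in Proposition \ref{prop:bc-uguale-derham} (which relies on the almost-K\"ahler identities to deduce $\bar\delta^{*}\alpha=0$ from $\delta^{*}\bar\delta^{*}\alpha=0$ together with $\delta\alpha=0$) and in Theorem \ref{thm:equalities-harmonic-spaces} (which combines the almost-K\"ahler identities with the bigraded characterisation of $\bar\delta$-harmonicity from Theorem \ref{thm:deltabar-harmonic} and the results of Cirici--Wilson). The only minor point worth making explicit in the write-up is the restriction of Proposition \ref{prop:bc-uguale-derham} to bigraded components, as discussed above.
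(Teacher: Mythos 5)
Your proposal is correct and follows essentially the same route as the paper, which likewise concatenates Proposition \ref{prop:bc-uguale-derham}, the equality $\mathcal{H}^{\bullet,\bullet}_{\bar\delta}(X)=\mathcal{H}^{\bullet,\bullet}_{\delta}(X)$ from Proposition \ref{prop:deltabar-delta-laplacians}, and Theorem \ref{thm:equalities-harmonic-spaces}. Your explicit remark that the bigraded version of Proposition \ref{prop:bc-uguale-derham} follows by intersecting the total-degree equality with $A^{p,q}(X)$ (since each $\mathcal{H}^{p,q}_{*}$ is $\Ker\Delta_{*}\cap A^{p,q}(X)$) is a valid justification of a step the paper leaves implicit.
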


\begin{proof}
The thesis follows from the previous Proposition saying that $\mathcal{H}^{\bullet,\bullet}_{BC(\delta,\bar\delta)}(X)=\mathcal{H}^{\bullet,\bullet}_{\bar\delta}(X)$, 
the fact that $\mathcal{H}^{\bullet,\bullet}_{\bar\delta}(X)=\mathcal{H}^{\bullet,\bullet}_{\delta}(X)$ and
Theorem \ref{thm:equalities-harmonic-spaces}. 
\end{proof}

\begin{cor}
Let $(X,J,g,\omega)$ be a compact almost-K\"ahler manifold of dimension
$2n$, then
$$
\mathcal{H}^{\bullet}_{BC(\delta,\bar\delta)}(X)=\mathcal{H}^{\bullet}_{A(\delta,\bar\delta)}(X)\,.
$$
\end{cor}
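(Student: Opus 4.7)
The plan is to deduce this corollary from Proposition~\ref{prop:bc-uguale-derham}, which already identifies $\mathcal{H}^{\bullet}_{BC(\delta,\bar\delta)}(X)$ with $\mathcal{H}^{\bullet}_{\bar\delta}(X)$, combined with the two Hodge-$*$ duality isomorphisms already in hand. The idea is that $*$ intertwines Bott-Chern and Aeppli on the one hand and is an automorphism on $\bar\delta$-harmonic forms on the other, so on an almost-K\"ahler manifold these two structures collapse to the same subspace.

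Concretely, I would assemble three ingredients. Proposition~\ref{prop:bc-uguale-derham} gives $\mathcal{H}^{k}_{BC(\delta,\bar\delta)}(X)=\mathcal{H}^{k}_{\bar\delta}(X)$ as subspaces of $A^{k}(X)$ for every $k$. Remark~\ref{rem:delbar-hodge-duality} supplies the isomorphism $*\colon\mathcal{H}^{k}_{\bar\delta}(X)\to\mathcal{H}^{2n-k}_{\bar\delta}(X)$, and Remark~\ref{rem:BC-hodge-duality} supplies the isomorphism $*\colon\mathcal{H}^{k}_{BC(\delta,\bar\delta)}(X)\to\mathcal{H}^{2n-k}_{A(\delta,\bar\delta)}(X)$. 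Reading these as equalities of subspaces of $A^{2n-k}(X)$ and chaining them,
\begin{equation*}
\mathcal{H}^{2n-k}_{A(\delta,\bar\delta)}(X)
= *\,\mathcal{H}^{k}_{BC(\delta,\bar\delta)}(X)
= *\,\mathcal{H}^{k}_{\bar\delta}(X)
= \mathcal{H}^{2n-k}_{\bar\delta}(X)
= \mathcal{H}^{2n-k}_{BC(\delta,\bar\delta)}(X).
\end{equation*}
As $k$ ranges over all degrees this produces the desired equality $\mathcal{H}^{\bullet}_{A(\delta,\bar\delta)}(X)=\mathcal{H}^{\bullet}_{BC(\delta,\bar\delta)}(X)$.

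There is no real obstacle here once Proposition~\ref{prop:bc-uguale-derham} has been established: the argument is purely a diagram chase exploiting that $*$ commutes with $\Delta_{\bar\delta}$ while swapping the Bott-Chern and Aeppli Laplacians of the pair $(\delta,\bar\delta)$. The substantive content, namely the use of the almost-K\"ahler identity $\bar\delta^{*}=-i[\delta,\Lambda]$ and the pairing trick $\langle\delta^{*}\delta\Lambda\alpha,\Lambda\alpha\rangle=\|\delta\Lambda\alpha\|^{2}$, was already absorbed into the proof of Proposition~\ref{prop:bc-uguale-derham}; the present corollary is obtained by applying $*$. An alternative, more hands-on route would be to verify directly that if $\delta^{*}\alpha=\bar\delta^{*}\alpha=0$ and $\delta\bar\delta\alpha=0$ then $\bar\delta\alpha=0$ (and symmetrically $\delta\alpha=0$), mimicking the computation in Proposition~\ref{prop:bc-uguale-derham} but starting from $L\alpha$ in place of $\Lambda\alpha$; however, the duality route above is shorter and avoids repeating the almost-K\"ahler bracket manipulations.
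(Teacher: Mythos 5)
Your argument is correct, and it is a genuinely shorter route than the paper's. The paper proceeds in two steps: it first proves the inclusion $\mathcal{H}^{\bullet}_{BC(\delta,\bar\delta)}(X)\subseteq\mathcal{H}^{\bullet}_{A(\delta,\bar\delta)}(X)$ directly, by taking a BC-harmonic form $\alpha$ (so $\delta\alpha=0$, $\bar\delta\alpha=0$, $\delta\bar\delta*\alpha=0$ by Lemma \ref{lemma:equivalence-bc-harmonic}), deducing $\delta\bar\delta\alpha=0$ and, via $\mathcal{H}^{\bullet}_{BC(\delta,\bar\delta)}=\mathcal{H}^{\bullet}_{\bar\delta}=\mathcal{H}^{\bullet}_{\delta}$ (Propositions \ref{prop:bc-uguale-derham} and \ref{prop:deltabar-delta-laplacians}), also $\delta*\alpha=0$ and $\bar\delta*\alpha=0$; it then upgrades the inclusion to an equality by the dimension count $h^{k}_{A(\delta,\bar\delta)}=h^{2n-k}_{BC(\delta,\bar\delta)}=h^{2n-k}_{\bar\delta}=h^{k}_{\bar\delta}=h^{k}_{BC(\delta,\bar\delta)}$, using exactly the two dualities you invoke. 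You instead read those dualities at the level of subspaces rather than dimensions, chaining
$\mathcal{H}^{2n-k}_{A(\delta,\bar\delta)}=*\,\mathcal{H}^{k}_{BC(\delta,\bar\delta)}=*\,\mathcal{H}^{k}_{\bar\delta}=\mathcal{H}^{2n-k}_{\bar\delta}=\mathcal{H}^{2n-k}_{BC(\delta,\bar\delta)}$,
which yields the equality in one stroke and makes the direct-inclusion step (and hence the explicit harmonicity characterizations) unnecessary. What this buys is economy: everything is delegated to Proposition \ref{prop:bc-uguale-derham} and the facts that $*$ commutes with $\Delta_{\bar\delta}$ and interchanges $\Delta_{BC(\delta,\bar\delta)}$ with $\Delta_{A(\delta,\bar\delta)}$ (Remarks \ref{rem:delbar-hodge-duality} and \ref{rem:BC-hodge-duality}), and these are legitimate to use since the paper itself cites them here; the only point you should make explicit is that the duality isomorphisms are literally the restriction of $*$, so their images are exactly the harmonic spaces in complementary degree, which is what lets you write them as equalities of subspaces of $A^{2n-k}(X)$. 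What the paper's longer route buys is an explicit description of which first-order conditions an A-harmonic form satisfies, which is of independent interest but not needed for the statement.
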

\begin{proof}
First we show the inclusion $\mathcal{H}^{\bullet}_{BC(\delta,\bar\delta)}(X)\subseteq\mathcal{H}^{\bullet}_{A(\delta,\bar\delta)}(X)$.
Let $\alpha\in\mathcal{H}^{\bullet}_{BC(\delta,\bar\delta)}(X)$ then, by Lemma \ref{lemma:equivalence-bc-harmonic}, $\delta\alpha=0$, $\deltabar\alpha=0$ and $\delta\deltabar*\alpha=0$. Hence, $\delta\deltabar\alpha=0$ and 
by Propositions \ref{prop:bc-uguale-derham} and \ref{prop:deltabar-delta-laplacians},
 $\alpha\in\mathcal{H}^{\bullet}_{BC(\delta,\bar\delta)}(X)=\mathcal{H}^{\bullet}_{\bar\delta}(X)=\mathcal{H}^{\bullet}_{\delta}(X)$, so
 $\delta*\alpha=0$ and $\deltabar*\alpha=0$ giving the inclusion.\\
 The other inclusion follows from having $h^{\bullet}_{BC(\delta,\bar\delta)}(X)=h^{\bullet}_{A(\delta,\bar\delta)}(X)$. Indeed by using
 Remark \ref{rem:BC-hodge-duality}, Proposition \ref{prop:bc-uguale-derham} and Remark \ref{rem:delbar-hodge-duality} we have the following equalities on the dimensions, for any $k$,
$$
h^{k}_{A(\delta,\bar\delta)}(X)=h^{2n-k}_{BC(\delta,\bar\delta)}(X)=
h^{2n-k}_{\deltabar}(X)=h^{k}_{\deltabar}(X)=
h^{k}_{BC(\delta,\bar\delta)}(X)\,.
$$
\end{proof}

We prove the following Lemma

\begin{lemma}\label{lemma:commutator-laplacian-L}
Let $(X,J,g,\omega)$ be an almost-K\"ahler manifold, then
\begin{itemize}
\item[$\bullet$] $[L,\Delta_{\bar\delta}]=0$ and $[L,\Delta_{\delta}]=0$,
\item[$\bullet$] $[\Lambda,\Delta_{\bar\delta}]=0$ and $[\Lambda,\Delta_{\delta}]=0$.
\end{itemize}
\end{lemma}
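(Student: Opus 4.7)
The plan is to reduce everything to commutator identities between $L,\Lambda$ and the four operators $\delta,\bar\delta,\delta^*,\bar\delta^*$, and then assemble them. I would first establish that $[L,\delta]=0$ and $[L,\bar\delta]=0$. Since $\omega\in A^{1,1}$ is $d$-closed and $d=\mu+\del+\delbar+\bar\mu$ decomposes into four pieces of distinct bidegrees, each component must annihilate $\omega$; in particular $\delta\omega=0$ and $\bar\delta\omega=0$. As $\delta,\bar\delta$ are graded derivations of degree $1$, the usual computation $[L,\delta]\alpha=-\delta\omega\wedge\alpha$ yields the vanishing. Taking Hilbert-space adjoints gives $[\Lambda,\delta^*]=0$ and $[\Lambda,\bar\delta^*]=0$.

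Next, I would take adjoints of the almost-K\"ahler identities $[\delta,\Lambda]=i\,\bar\delta^*$ and $[\bar\delta,\Lambda]=-i\,\delta^*$ from Lemma \ref{lemma:almost-kahler-identities} to obtain
$$
[L,\delta^*]=-i\,\bar\delta,\qquad [L,\bar\delta^*]=i\,\delta.
$$
With these four identities in hand, the computation of $[L,\Delta_{\bar\delta}]$ becomes routine using the associative Leibniz rule $[L,AB]=[L,A]B+A[L,B]$:
$$
[L,\Delta_{\bar\delta}]=\bar\delta[L,\bar\delta^*]+[L,\bar\delta^*]\bar\delta=i\bar\delta\delta+i\delta\bar\delta=i(\delta\bar\delta+\bar\delta\delta),
$$
which vanishes by the anticommutation $\delta\bar\delta+\bar\delta\delta=0$ established in the opening lemma of Section \ref{section:preliminaries}. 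The analogous calculation for $[L,\Delta_\delta]$ produces $-i(\delta\bar\delta+\bar\delta\delta)=0$.

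For the bracket with $\Lambda$, I would simply take Hilbert-space adjoints of the two identities just proved: since $\Delta_{\bar\delta}$ and $\Delta_\delta$ are self-adjoint and $L^*=\Lambda$, the relation $[A,B]^*=-[A^*,B^*]$ immediately gives $[\Lambda,\Delta_{\bar\delta}]=0$ and $[\Lambda,\Delta_\delta]=0$.

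There is no real obstacle here; the only subtlety is bookkeeping the signs and factors of $i$ when dualizing the almost-K\"ahler identities. The whole argument is essentially an exercise in propagating the derivation property of $[L,\cdot]$ through the definition of the Laplacians, with the key input being the anticommutation of $\delta$ and $\bar\delta$, which is precisely the feature that survives from the K\"ahler case into the almost-K\"ahler setting.
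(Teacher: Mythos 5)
Your argument is correct and follows essentially the same route as the paper: the paper proves $[L,\Delta_{\bar\delta}]=0$ via the graded Jacobi identity $[L,[\bar\delta,\bar\delta^*]]=-[\bar\delta,[\bar\delta^*,L]]=i[\bar\delta,\delta]=0$, which uses exactly your ingredients (the adjointed almost-K\"ahler identities, $[L,\bar\delta]=0$ from $d\omega=0$, and the anticommutation $\delta\bar\delta+\bar\delta\delta=0$), and the remaining three brackets are obtained by adjunction/conjugation just as you do.
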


\begin{proof}
We just need to prove the first equality
$$
[L,\Delta_{\bar\delta}]=[L,[\bar\delta,\bar\delta^*]]=
-[\bar\delta,[\bar\delta^*,L]]=i[\bar\delta,\delta]=0\,.
$$
\end{proof}

As a consequence we have the following Hard-Lefschetz Theorem
on the spaces of $\bar\delta-$ and $\Delta_{BC(\delta,\bar\delta)}-$harmonic forms.

\begin{theorem}\label{thm:hard-lefschetz}
Let $(X,J,g,\omega)$ be a compact almost-K\"ahler $2n$-dimensional manifold, then, for any $k$, the maps
$$
L^{k}:\mathcal{H}^{n-k}_{BC(\delta,\bar\delta)}(X)\to
\mathcal{H}^{n+k}_{BC(\delta,\bar\delta)}(X)
$$
are isomorphisms.
\end{theorem}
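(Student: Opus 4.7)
The strategy is to reduce the statement to a standard $\mathfrak{sl}(2,\R)$-representation-theoretic argument on the finite-dimensional space of harmonic forms. First I would invoke Proposition \ref{prop:bc-uguale-derham} to replace $\mathcal{H}^{\bullet}_{BC(\delta,\bar\delta)}(X)$ by $\mathcal{H}^{\bullet}_{\bar\delta}(X)$ throughout, so it suffices to show that
$$
L^{k}:\mathcal{H}^{n-k}_{\bar\delta}(X)\to\mathcal{H}^{n+k}_{\bar\delta}(X)
$$
is an isomorphism for every $k\geq 0$.

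Next, I would use Lemma \ref{lemma:commutator-laplacian-L}, which gives $[L,\Delta_{\bar\delta}]=0$ and $[\Lambda,\Delta_{\bar\delta}]=0$. Consequently, both $L$ and $\Lambda$ preserve the finite-dimensional space $\mathcal{H}^{\bullet}_{\bar\delta}(X)$, and hence so does their commutator $H:=[L,\Lambda]$. On any symplectic manifold of dimension $2n$, the operators $L$, $\Lambda$, $H$ satisfy the $\mathfrak{sl}(2,\R)$ commutation relations $[H,L]=2L$, $[H,\Lambda]=-2\Lambda$, $[L,\Lambda]=H$, and $H$ acts on $A^{k}(X)$ as multiplication by $k-n$. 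Restricting to $\mathcal{H}^{\bullet}_{\bar\delta}(X)$, we therefore obtain a finite-dimensional $\mathfrak{sl}(2,\R)$-representation in which the weight space of weight $m$ is exactly $\mathcal{H}^{n+m}_{\bar\delta}(X)$.

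Finally, I would invoke the elementary fact that in any finite-dimensional $\mathfrak{sl}(2,\R)$-representation, the operator $L^{k}$ (the $k$-th power of the raising operator) restricts to an isomorphism from the weight $-k$ space to the weight $k$ space. Applied with the weight $-k$ space equal to $\mathcal{H}^{n-k}_{\bar\delta}(X)$ and the weight $k$ space equal to $\mathcal{H}^{n+k}_{\bar\delta}(X)$, this yields exactly the desired isomorphism, and reinterpreting via Proposition \ref{prop:bc-uguale-derham} concludes the proof.

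The only delicate point is ensuring the $\mathfrak{sl}(2,\R)$-action really restricts to the harmonic subspace; but this is precisely the content of the two bracket identities in Lemma \ref{lemma:commutator-laplacian-L}, since once $L$ and $\Lambda$ preserve $\mathcal{H}^{\bullet}_{\bar\delta}(X)$ their commutator $H$ automatically does. The usual degree computation $H_{|A^{k}}=(k-n)\cdot\mathrm{id}$ is a property of the symplectic structure alone and does not depend on integrability of $J$, so no further almost-K\"ahler input is needed beyond what Lemma \ref{lemma:commutator-laplacian-L} already supplies.
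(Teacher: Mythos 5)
Your proof is correct, and it reaches the conclusion by a slightly different mechanism than the paper. Both arguments start the same way: Proposition \ref{prop:bc-uguale-derham} reduces the statement to $\bar\delta$-harmonic forms, and Lemma \ref{lemma:commutator-laplacian-L} guarantees that the relevant operators preserve $\mathcal{H}^{\bullet}_{\bar\delta}(X)$. The paper then finishes more economically: it uses only $[L,\Delta_{\bar\delta}]=0$, observes that $L^{k}:A^{n-k}(X)\to A^{n+k}(X)$ is an isomorphism on \emph{all} forms (pointwise symplectic linear algebra), so its restriction to harmonic forms is injective, and upgrades injectivity to bijectivity via the dimension equality $h^{n-k}_{\bar\delta}(X)=h^{n+k}_{\bar\delta}(X)$ coming from the Hodge-$*$ duality of Remark \ref{rem:delbar-hodge-duality}. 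You instead use both commutators $[L,\Delta_{\bar\delta}]=0$ and $[\Lambda,\Delta_{\bar\delta}]=0$ to restrict the full $\mathfrak{sl}(2)$-triple $(L,\Lambda,H=[L,\Lambda])$, with $H_{|A^k}=(k-n)\,\mathrm{id}$, to the finite-dimensional space $\mathcal{H}^{\bullet}_{\bar\delta}(X)$ (finite-dimensionality being exactly where ellipticity of $\Delta_{\bar\delta}$ and compactness enter), and then quote the standard representation-theoretic fact that the $k$-th power of the raising operator is an isomorphism between the weight $-k$ and weight $k$ spaces. Your route avoids the dimension count and duality remark, and as a bonus it yields more, namely a full Lefschetz (primitive) decomposition of $\mathcal{H}^{\bullet}_{\bar\delta}(X)$; the paper's route is more elementary, needing no $\mathfrak{sl}(2)$ theory and only half of Lemma \ref{lemma:commutator-laplacian-L}. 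Your closing remarks on the delicate points (restriction of the action to the harmonic subspace, and the fact that the weight identity is pointwise and independent of integrability) are exactly the right ones.
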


\begin{proof}
Since by the previous Lemma $[L,\Delta_{\bar\delta}]=0$ and $[\Lambda,\Delta_{\bar\delta}]=0$ and in general
$$
L^{k}:A^{n-k}(X)\to
A^{n+k}(X)
$$
are isomorphisms, then
the maps
$$
L^{k}:\mathcal{H}^{n-k}_{\bar\delta}(X)\to
\mathcal{H}^{n+k}_{\bar\delta}(X)
$$
are injective, and so isomorphisms by Remark \ref{rem:delbar-hodge-duality}.
The maps
$$
L^{k}:\mathcal{H}^{n-k}_{BC(\delta,\bar\delta)}(X)\to
\mathcal{H}^{n+k}_{BC(\delta,\bar\delta)}(X)
$$
are clearly isomorphic by Proposition \ref{prop:bc-uguale-derham}.
\end{proof}

For the bigraded case the result holds and it is proven in \cite[Theorem 5.1]{cirici-wilson-2}.

\section{Examples}

An important source of non-K\"ahler examples is furnished by nilmanifolds, namely compact quotients of a nilpotent connected simply-connected Lie group by a lattice.
On almost-complex nilmanifolds, given a left-invariant Hermitian metric, one can look for left-invariant harmonic (with respect to some operator) forms but in general they do not exhaust the whole space of harmonic forms. 
In the following we compute some examples showing that even on almost-K\"ahler manifolds we do not have a decomposition of the form
$$
\mathcal{H}^\bullet_{\bar\delta}(X)\neq\bigoplus_{p+q=\bullet} \mathcal{H}^{p,q}_{\bar\delta}(X)\,,
$$
differently from the case (cf. \cite[Theorem 4.1]{cirici-wilson-2})
$$
\mathcal{H}^\bullet_{\delbar}(X)\cap
\mathcal{H}^\bullet_{\mu}(X)=\bigoplus_{p+q=\bullet} \mathcal{H}^{p,q}_{\delbar}(X)\cap
\mathcal{H}^{p,q}_{\mu}(X)\,.
$$
Moreover, we will see that when the almost-Hermitian structure is not almost-K\"ahler the equalities in Theorem \ref{thm:equalities-harmonic-spaces} may fail and
also the inequalities in Corollary \ref{cor:comparison-betti-numbers} may fail,
in particular we construct an example where
$$
h^2_{\deltabar}(X)>b_2(X)\,.
$$

\begin{ex}\label{example-1}

Let $\mathbb{H}(3;\mathbb{R})$ be the $3$-dimensional Heisenberg group and
$\mathbb{H}(3;\mathbb{Z})$ be the subgroup of matrices with entries in $\mathbb{Z}$.
The Kodaira-Thurston manifold is defined as the quotient
$$
X:=\left(\mathbb{H}(3;\mathbb{R})\times \mathbb{R}\right)/
\left(\mathbb{H}(3;\mathbb{Z})\times \mathbb{Z}\right)\,.
$$
The manifold $X$ is a $4$-dimensional nilmanifold which admits both complex and symplectic structures.
We consider the non-integrable almost-complex structure $J$ defined by the structure equations
$$
\left\lbrace
\begin{array}{lcl}
d\varphi^1 & =& 0\\
d\varphi^2 &=& \frac{1}{2i}\varphi^{12}+\frac{1}{2i}\left(\varphi^{1\bar 2}-\varphi^{2\bar 1}\right)+\frac{1}{2i}\varphi^{\bar 1\bar 2}
\end{array}
\right.\,
$$
where $\left\lbrace\varphi^1\,,\varphi^2\right\rbrace$ is a global co-frame of (1,0)-forms on $X$.
The $(1,1)$-form $\omega:=\frac{1}{2i}\left(\varphi^{1\bar 1}+\varphi^{2\bar 2}\right)$ is a compatible symplectic structure, hence the pair $(J,\omega)$ induces a almost-K\"ahler structure on $X$.\\
Recall that $\mathcal{H}^{\bullet,\bullet}_{\bar\delta}(X)=\mathcal{H}^{\bullet,\bullet}_{d}(X)$, but in general
 $\mathcal{H}^{\bullet}_{\bar\delta}(X)\subseteq\mathcal{H}^{\bullet}_{d}(X)$.
 One can easily compute the spaces of left-invariant harmonic forms and one gets
$$
\displaystyle\begin{array}{lcl}
\mathcal{H}^1_{\bar\delta,\text{inv}}(X) & = &  \displaystyle
\left\langle \varphi^1,\,\bar\varphi^1 \right\rangle, \\[5pt]
\mathcal{H}^2_{\bar\delta,\text{inv}}(X) & = &  \displaystyle
\left\langle \varphi^{1\bar 1}\,,\varphi^{2\bar 2}\,,
\varphi^{12}-\varphi^{\bar 1\bar 2}\,,
\varphi^{1\bar 2}+\varphi^{2\bar 1}
 \right\rangle
\end{array}
$$
and (cf. also \cite{cirici-wilson-2})
$$
\displaystyle\begin{array}{lcl}
\mathcal{H}^{1,0}_{\bar\delta,\text{inv}}(X) & = &  \displaystyle
\left\langle \varphi^1 \right\rangle, \\[5pt]
\mathcal{H}^{0,1}_{\bar\delta,\text{inv}}(X) & = &  \displaystyle
\left\langle \bar\varphi^1 \right\rangle, \\[5pt]
\mathcal{H}^{2,0}_{\bar\delta,\text{inv}}(X) & = &  \displaystyle
0\,,\\[5pt]
\mathcal{H}^{0,2}_{\bar\delta,\text{inv}}(X) & = &  \displaystyle
0\,,\\[5pt]
\mathcal{H}^{1,1}_{\bar\delta,\text{inv}}(X) & = &  \displaystyle
\left\langle \varphi^{1\bar 1}\,,\varphi^{2\bar 2}\,,
\varphi^{1\bar 2}+\varphi^{2\bar 1}
 \right\rangle
\end{array}
$$
The remaining spaces can be computed easily by duality.
A first observation is that
$$
\mathcal{H}^2_{\bar\delta,\text{inv}}(X)\neq\bigoplus_{p+q=2} \mathcal{H}^{p,q}_{\bar\delta,\text{inv}}(X)\,. 
$$
In particular, by \cite[Theorem 4.1]{cirici-wilson-2} and Theorem
\ref{thm:equalities-harmonic-spaces}
$$
\mathcal{H}^2_{\delbar,\text{inv}}(X)\cap
\mathcal{H}^2_{\mu,\text{inv}}(X)=
\bigoplus_{p+q=2}\mathcal{H}^{p,q}_{\delbar,\text{inv}}(X)\cap
\mathcal{H}^{p,q}_{\mu,\text{inv}}(X)=
\bigoplus_{p+q=2}\mathcal{H}^{p,q}_{\bar\delta,\text{inv}}(X)\subset
\mathcal{H}^2_{\bar\delta,\text{inv}}(X)\,,
$$
Therefore, also in the almost-K\"ahler case we can have (cf. Proposition
\ref{prop:delta-delbar-delta-deltabar})
$$
\mathcal{H}^2_{\delbar,\text{inv}}(X)\cap
\mathcal{H}^2_{\mu,\text{inv}}(X)\neq
\mathcal{H}^2_{\bar\delta,\text{inv}}(X).
$$
Moreover, 
since $\text{dim}\,\mathcal{H}^2_{\bar\delta,\text{inv}}(X)=4=b_2(X)$,
from Corollary \ref{cor:comparison-betti-numbers} we have that
$$
\mathcal{H}^2_{\bar\delta,\text{inv}}(X)=\mathcal{H}^2_{\bar\delta}(X)\,.
$$
Since $\text{dim}\,\mathcal{H}^1_{\bar\delta,\text{inv}}(X)=2$
then
$$
2\leq\text{dim}\,\mathcal{H}^1_{\bar\delta}(X)\leq b_1(X)=3\,,
$$
hence $\text{dim}\,\mathcal{H}^1_{\bar\delta}(X)=3$ if and only if
there exists a non-left-invariant $\bar\delta$-harmonic $1$-form\,.
\end{ex}

\begin{ex}\label{example-2}
Let $X$ be the $4$-dimensional Filiform nilmanifold and consider the non-integrable almost-complex structure $J$ defined by the following structure equations
$$
\left\lbrace
\begin{array}{lcl}
d\varphi^1 & =& 0\\
d\varphi^2 &=& \frac{1}{2i}\varphi^{12}+\frac{1}{2i}\left(\varphi^{1\bar 2}-\varphi^{2\bar 1}\right)-i\varphi^{1\bar 1}+\frac{1}{2i}\varphi^{\bar 1\bar 2}
\end{array}
\right.
$$
where $\left\lbrace\varphi^1\,,\varphi^2\right\rbrace$ is a global co-frame of (1,0)-forms on $X$.
As observed in \cite{cirici-wilson-2} $J$ does not admit any compatible symplectic structure.
We fix the diagonal metric $\omega:=\frac{1}{2i}\left(\varphi^{1\bar 1}+\varphi^{2\bar 2}\right)$. 
 One can easily compute the spaces of left-invariant harmonic forms and one gets
$$
\displaystyle\begin{array}{lcl}
\mathcal{H}^1_{\bar\delta,\text{inv}}(X) & = &  \displaystyle
\left\langle \varphi^1,\,\bar\varphi^1 \right\rangle, \\[5pt]
\mathcal{H}^2_{\bar\delta,\text{inv}}(X) & = &  \displaystyle
\left\langle 
\varphi^{12}-\varphi^{\bar 1\bar 2}\,,
-\frac{1}{2}\varphi^{1\bar 1}+\varphi^{1\bar 2}-\frac{1}{2}\varphi^{2\bar 2}\,,
\frac{1}{2}\varphi^{1\bar 1}+\varphi^{2\bar 1}+\frac{1}{2}\varphi^{2\bar 2}
 \right\rangle
\end{array}
$$
and 
$$
\displaystyle\begin{array}{lcl}
\mathcal{H}^{1,0}_{\bar\delta,\text{inv}}(X) & = &  \displaystyle
\left\langle \varphi^1 \right\rangle, \\[5pt]
\mathcal{H}^{0,1}_{\bar\delta,\text{inv}}(X) & = &  \displaystyle
\left\langle \bar\varphi^1 \right\rangle, \\[5pt]
\mathcal{H}^{2,0}_{\bar\delta,\text{inv}}(X) & = &  \displaystyle
0\,,\\[5pt]
\mathcal{H}^{0,2}_{\bar\delta,\text{inv}}(X) & = &  \displaystyle
0\,,\\[5pt]
\mathcal{H}^{1,1}_{\bar\delta,\text{inv}}(X) & = &  \displaystyle
\left\langle -\frac{1}{2}\varphi^{1\bar 1}+\varphi^{1\bar 2}-\frac{1}{2}\varphi^{2\bar 2}\,,
\frac{1}{2}\varphi^{1\bar 1}+\varphi^{2\bar 1}+\frac{1}{2}\varphi^{2\bar 2}
 \right\rangle
\end{array}
$$
The remaining spaces can be computed easily by duality.
Since $(J,\omega)$ is not an almost-K\"ahler structure we cannot apply Corollary
\ref{cor:comparison-betti-numbers}, in particular in this case we have the opposite inequality
$$
\text{dim}\,\mathcal{H}^2_{\bar\delta}(X)\geq
3=\text{dim}\,\mathcal{H}^2_{\bar\delta,\text{inv}}(X)>2=b_2(X)\,.
$$
Moreover, one can easily compute the $\Delta_{BC(\delta,\bar\delta)}$-harmonic forms
$$
\displaystyle\begin{array}{lcl}
\mathcal{H}^1_{BC(\delta,\bar\delta),\text{inv}}(X) & = &  \displaystyle
\left\langle \varphi^1,\,\bar\varphi^1 \right\rangle, \\[5pt]
\mathcal{H}^2_{BC(\delta,\bar\delta),\text{inv}}(X) & = &  \displaystyle
\left\langle 
\varphi^{1\bar 1}\,,
\varphi^{12}-\varphi^{\bar 1\bar 2}\,,
\varphi^{1\bar 2}-\frac{1}{2}\varphi^{2\bar 2}\,,
\varphi^{2\bar 1}+\frac{1}{2}\varphi^{2\bar 2}
 \right\rangle
\end{array}
$$
and 
$$
\displaystyle\begin{array}{lcl}
\mathcal{H}^{1,0}_{BC(\delta,\bar\delta),\text{inv}}(X) & = &  \displaystyle
\left\langle \varphi^1 \right\rangle, \\[5pt]
\mathcal{H}^{0,1}_{BC(\delta,\bar\delta),\text{inv}}(X) & = &  \displaystyle
\left\langle \bar\varphi^1 \right\rangle, \\[5pt]
\mathcal{H}^{2,0}_{BC(\delta,\bar\delta),\text{inv}}(X) & = &  \displaystyle
0\,,\\[5pt]
\mathcal{H}^{0,2}_{BC(\delta,\bar\delta),\text{inv}}(X) & = &  \displaystyle
0\,,\\[5pt]
\mathcal{H}^{1,1}_{BC(\delta,\bar\delta),\text{inv}}(X) & = &  \displaystyle
\left\langle \varphi^{1\bar 1}\,,
\varphi^{1\bar 2}-\frac{1}{2}\varphi^{2\bar 2}\,,
\varphi^{2\bar 1}+\frac{1}{2}\varphi^{2\bar 2}
 \right\rangle
\end{array}
$$
Notice that, unlike the almost-K\"ahler case
$$
\mathcal{H}^2_{BC(\delta,\bar\delta),\text{inv}}(X)\neq
\mathcal{H}^2_{\bar\delta,\text{inv}}(X)
$$
and
$$
\mathcal{H}^{1,1}_{BC(\delta,\bar\delta),\text{inv}}(X)\neq
\mathcal{H}^{1,1}_{\bar\delta,\text{inv}}(X)\,.
$$
\end{ex}

\begin{ex}
Let $X:=\mathbb{I}_3$ be the Iwasawa manifold, namely the quotient of the complex $3$-dimensional Heisenberg group $\mathbb{H}(3;\mathbb{C})$ by the subgroup of matrices with entries in $\mathbb{Z}[i]$. The manifold $X$ is a $6$-dimensional nilmanifold admitting both complex and symplectic structures.
Then there exists a global co-frame of $1$-forms $\left\lbrace e^i\right\rbrace_{i=1,\,\cdots\,,6}$ satisfying the following structure equations
$$
\left\lbrace
\begin{array}{lcl}
d\,e^1 & =& 0\\
d\,e^2 & =& 0\\
d\,e^3 & =& 0\\
d\,e^4 & =& 0\\
d\,e^5 & =& -e^{13}+e^{24}\\
d\,e^6 & =& -e^{14}-e^{23}\,.
\end{array}
\right.
$$
We define the following non-integrable almost-complex structure
$$
Je^1=-e^6,\quad Je^2=-e^5, \quad Je^3=-e^4
$$
and consider the compatible symplectic structure
$$
\omega:=e^{16}+e^{25}+e^{34}.
$$
Therefore, $(X,J,\omega)$ is a compact $6$-dimensional almost-K\"ahler manifold.
We set
$$
\left\lbrace
\begin{array}{lcl}
\varphi^1 & =& e^1+ie^6\\
\varphi^2 & =& e^2+ie^5\\
\varphi^3 & =& e^3+ie^4
\end{array}
\right.
$$
then the structure equations become
$$
\left\lbrace
\begin{array}{lcl}
d\,\varphi^1 & =& \left(-\frac{1}{4}\varphi^{13}-\frac{i}{4}\varphi^{23}\right)+\left(\frac{1}{4}\varphi^{1\bar3}+\frac{1}{4}\varphi^{3\bar 1}-
\frac{i}{4}\varphi^{2\bar3}+\frac{i}{4}\varphi^{3\bar2}\right)+
\left(\frac{1}{4}\varphi^{\bar1\bar3}-\frac{i}{4}\varphi^{\bar2\bar3}\right)\\
d\,\varphi^2 & =& \left(-\frac{i}{4}\varphi^{13}+\frac{1}{4}\varphi^{23}\right)+\left(-\frac{i}{4}\varphi^{1\bar3}+\frac{i}{4}\varphi^{3\bar 1}-
\frac{1}{4}\varphi^{2\bar3}-\frac{1}{4}\varphi^{3\bar2}\right)+
\left(-\frac{i}{4}\varphi^{\bar1\bar3}-\frac{1}{4}\varphi^{\bar2\bar3}\right)\\
d\,\varphi^3 & =& 0
\end{array}
\right.
$$
One can  compute the spaces of left-invariant harmonic forms and one gets
$$
\displaystyle\begin{array}{lcl}
\mathcal{H}^1_{\bar\delta,\text{inv}}(X) & = &  \displaystyle
\left\langle \varphi^3,\,\bar\varphi^3 \right\rangle, \\[5pt]
\mathcal{H}^2_{\bar\delta,\text{inv}}(X) & = &  \displaystyle
\left\langle \varphi^{1\bar 1}+\varphi^{2\bar 2}\,,
\varphi^{3\bar3}\,, -\varphi^{12}+\varphi^{\bar1\bar2}
 \right\rangle,\\[5pt]
 \mathcal{H}^3_{\bar\delta,\text{inv}}(X) & = &  \displaystyle
\left\langle \varphi^{13\bar 1}+\varphi^{23\bar 2}\,,
\varphi^{1\bar1\bar3}+\varphi^{2\bar2\bar3}
 \right\rangle
\end{array}
$$
and 
$$
\displaystyle\begin{array}{lcl}
\mathcal{H}^{1,0}_{\bar\delta,\text{inv}}(X) & = &  \displaystyle
\left\langle \varphi^3 \right\rangle, \\[5pt]
\mathcal{H}^{0,1}_{\bar\delta,\text{inv}}(X) & = &  \displaystyle
\left\langle \bar\varphi^3 \right\rangle, \\[5pt]
\mathcal{H}^{2,0}_{\bar\delta,\text{inv}}(X) & = &  \displaystyle
0\,,\\[5pt]
\mathcal{H}^{0,2}_{\bar\delta,\text{inv}}(X) & = &  \displaystyle
0\,,\\[5pt]
\mathcal{H}^{1,1}_{\bar\delta,\text{inv}}(X) & = &  \displaystyle
\left\langle  \varphi^{1\bar 1}+\varphi^{2\bar 2}\,,
\varphi^{3\bar3}\right\rangle\,,\\[5pt]
\mathcal{H}^{2,1}_{\bar\delta,\text{inv}}(X) & = &  \displaystyle
\left\langle\varphi^{13\bar1}+\varphi^{23\bar 2}\right\rangle\,,\\[5pt]
\mathcal{H}^{1,2}_{\bar\delta,\text{inv}}(X) & = &  \displaystyle
\left\langle
\varphi^{1\bar1\bar3}+\varphi^{2\bar2\bar3}
\right\rangle\,.
\end{array}
$$
The remaining spaces can be computed easily by duality.
In particular
$$
\mathcal{H}^2_{\bar\delta,\text{inv}}(X)\neq\bigoplus_{p+q=2} \mathcal{H}^{p,q}_{\bar\delta,\text{inv}}(X)\,. 
$$
\end{ex}


\begin{thebibliography}{12}

\bibitem{brylinski} J.-L. Brylinski, A differential complex for Poisson manifolds,
{\em J. Differ. Geom.} \textbf{28} (1988), no.~1, 93--114.

\bibitem{cirici-wilson-1} J. Cirici, S. O. Wilson, 
Dolbeault cohomology for almost complex manifolds,
 \texttt{arXiv:1809.01416 [math.DG]}.

\bibitem{cirici-wilson-2} J. Cirici, S. O. Wilson, 
Topological and geometric aspects of almost K\"ahler manifolds via harmonic theory,
 \texttt{arXiv:1809.01414 [math.DG]}.
 
\bibitem{debartolomeis-tomassini} P. de Bartolomeis, A. Tomassini, On formality of some symplectic manifolds. {\em Internat. Math. Res. Notices} \textbf{2001}, no. 24, (2001) 1287--1314.

\bibitem{DLZ} T. Dr\v{a}ghici, T.-J. Li, W. Zhang, Symplectic form and cohomology decomposition of almost complex four-manifolds,
{\em Int. Math. Res. Not.}, {\bf 2010} (2010), no. 1, 1--17.

\bibitem{LZ} T.-J. Li, W. Zhang, Comparing tamed and compatible symplectic cones and cohomological properties of almost complex manifolds,
{\em Comm. Anal. and Geom.} \textbf{17} (2009), 651--684.


\bibitem{mathieu} O. Mathieu, Harmonic cohomology classes of symplectic manifolds,
{\em Comment. Math. Helv.} \textbf{70} (1995), no.~1, 1--9.

\bibitem{schweitzer}
M. Schweitzer, Autour de la cohomologie de Bott-Chern, Pr\'epublication de l'Institut Fourier no.~703 (2007), \texttt{arXiv:0709.3528}.

\bibitem{tardini-proceeding} N. Tardini, Cohomological aspects on complex and symplectic manifolds,
 \emph{Complex and Symplectic Geometry, Springer INdAM Series} \textbf{21}, Springer, 2017, 231--247.

\bibitem{tardini-tomassini-proper-surjective} N. Tardini, A. Tomassini,
On the cohomology of almost-complex and symplectic manifolds and proper surjective maps, \emph{Internat. J. Math.} \textbf{27} no.~12, 1650103 (20 pages) (2016)

\bibitem{tardini-tomassini-instability} N. Tardini, A. Tomassini,
Symplectic cohomologies and deformations,
\emph{Boll. Unione Mat. Ital.}, \textbf{12} (2019) no. 1--2, 221--237.

\bibitem{tseng-yau-I} L.-S. Tseng, S.-T. Yau, Cohomology and Hodge Theory on Symplectic
manifolds: I,
{\em J. Differ. Geom.} \textbf{91} (2012), no.~3, 383--416.

\bibitem{yan} D. Yan, Hodge structure on symplectic manifolds,
{\em Adv. Math.} \textbf{120} (1996), no.~1, 143--154.

\end{thebibliography}
\end{document}